\DeclareFontFamily{U}{mathx}{}
\DeclareFontShape{U}{mathx}{m}{n}{<-> mathx10}{}
\DeclareSymbolFont{mathx}{U}{mathx}{m}{n}
\DeclareMathAccent{\widehat}{0}{mathx}{"70}
\DeclareMathAccent{\widecheck}{0}{mathx}{"71}
\numberwithin{equation}{section}
\newtheorem{thm}{Theorem}[section] 
\newtheorem{prop}[thm]{Proposition}
\newtheorem{lem}[thm]{Lemma}
\newtheorem{cor}[thm]{Corollary}
\theoremstyle{definition}
\newtheorem{defn}[thm]{Definition}
\newtheorem{notation}[thm]{Notation}
\newtheorem{ex}[thm]{Example}
\theoremstyle{remark}
\newtheorem{rem}[thm]{Remark}
\newcommand{\kk}{\mathbb{Z}}
\newcommand{\at}{{\mathtt{a}}}
\newcommand{\atb}{{\mathtt{b}}}
\newcommand{\pcoset}{{p}}
\newcommand{\qcoset}{{q}}
\newcommand{\rcoset}{{r}}
\def\hat{\widehat}
\newcommand{\pa}{\partial}
\DeclareMathOperator{\Hom}{Hom}
\newcommand{\inv}{^{-1}}
\newcommand{\cham}{\mathsf{Cham}}
\newcommand{\cone}{\mathsf{Cone}}
\newcommand{\mi}{\underline}
\newcommand{\ma}{\overline}
\newcommand{\expr}{\leftrightharpoons}
\newcommand{\Dem}{\mathcal{SD}}
\newcommand{\SC}{\mathcal{SC}}
\DeclareMathOperator{\Com}{\mathsf{Com}}
\DeclareMathOperator{\leftdes}{LD}
\DeclareMathOperator{\rightdes}{RD}
\DeclareMathOperator{\leftred}{LR}
\DeclareMathOperator{\rightred}{RR}
\DeclareMathOperator{\core}{core}
\newcommand{\NCA}{D}
\newcommand{\coreJ}[1]{\SC^{\core}_{#1}}
\newcommand{\con}{\bar}
\newcommand{\scon}{\textcolor{Orange}{\con{s}}}
\newcommand{\tcon}{\textcolor{Magenta}{\con{t}}}
\newcommand{\ucon}{\con{u}}
\newcommand{\Lcon}{\con{L}}
\newcommand{\ka}[1]{\!_{\langle #1\rangle}a}
\newcommand{\kb}[1]{\!_{\langle#1\rangle}b}
\definecolor{Mred}{RGB}{236,6,36}
\definecolor{Mgreen}{RGB}{0,158,88}
\definecolor{Mblue}{RGB}{71,71,151}
\newcommand{\sblue}{\textcolor{RoyalBlue}{s}}
\newcommand{\teal}{\textcolor{Green}{t}}
\newcommand{\red}{\textcolor{Mred}{r}}
\newcommand{\gr}{\textcolor{Mgreen}{g}}
\newcommand{\bl}{\textcolor{Mblue}{b}}
\newcommand{\rev}[1]{{\color{black} {#1}}}
\newcommand{\rrev}[1]{{\color{black} {#1}}}
\title{An atomic Coxeter presentation}
\author[Hankyung Ko]{Hankyung Ko}
\address{Department of Mathematics, Uppsala University,
Box. 480,
SE-75106, Uppsala, Sweden}
\email{hankyung.ko@math.uu.se}
\begin{document}

\begin{abstract}
We study parabolic double cosets in a Coxeter system  by decomposing them into atom(ic coset)s, a generalization of simple reflections introduced in a joint work with Elias, Libedinsky, Patimo.
We define and classify braid relations between compositions of atoms and prove a Matsumoto theorem. 
Together with a quadratic relation, our braid relations give a presentation of nilCoxeter algebroids similar to Demazure's presentation of nilCoxeter algebras. 
Our consideration of reduced compositions of atoms gives rise to a new combinatorial structure, which is equipped with a length function and a Bruhat order 
and is realized as Tits cone intersections in the sense of Iyama-Wemyss.
\end{abstract}
\maketitle
\section{Introduction}

\subsection{Coxeter complexes and core cosets}\label{ss.com}

Let $(W,S)$ be a Coxeter system.
For a subset $J\subset S$ we denote by $W_J$ the parabolic subgroup generated by $J$.
Recall (or see \cite{AbBr}) that the associated Coxeter complex $\Com(W,S)$ consists of cells (simplices)  indexed by proper parabolic left cosets, so that the cell $C_{wW_J}$ corresponding to $wW_J$, where $w\in W$ and $J\subsetneq S$, has dimension $|S\setminus J|-1$. In particular, the maximal cells in $\Com(W,S)$ are indexed by the one-element cosets $wW_\emptyset = \{w\}$.

The Coxeter group $W$ acts naturally on the left cosets in $(W,S)$, thus on the cells in $\Com(W,S)$. 
Let us fix a subset $I\subset S$ and consider the subcomplex $\Com(W,S,I)$ in $\Com(W,S)$ consisting of the cells fixed by the action of every $s\in I$. 
Then a cell $C_{wW_J}$ in $\Com(W,S)$ is contained in $\Com(W,S,I)$ if and only if the (left) action of $W_I$ on $wW_J$ is trivial, and the latter is if and only if $wW_J=W_IwW_J$.
Thus, given $I\subset S$, we can relabel the cells in $\Com(W,S,I)$ by such double cosets $W_IwW_J$.
The maximal cells in $\Com(W,S,I)$ are then indexed by what we call 
the \emph{core} $(W_I,W_J)$-cosets (where $J\subset S$ varies): 

\begin{defn}\label{introdef.core}
    A parabolic double coset $W_I w W_J$ is called a \emph{core coset} if $W_Iw = W_IwW_J = wW_J$ as sets.
\end{defn}

Here we illustrate the above discussions in an example.

\begin{ex}\label{ex.aA2}
Consider the affine symmetric group $(W,S)=(\widetilde{S}_3,\{\red,\gr,\bl\})$. Figure~\ref{rgb} shows $\Com(W,S)$. 
\begin{figure}
\centering
\begin{minipage}{.5\textwidth}
  \centering
  \includegraphics[width=.95\linewidth]{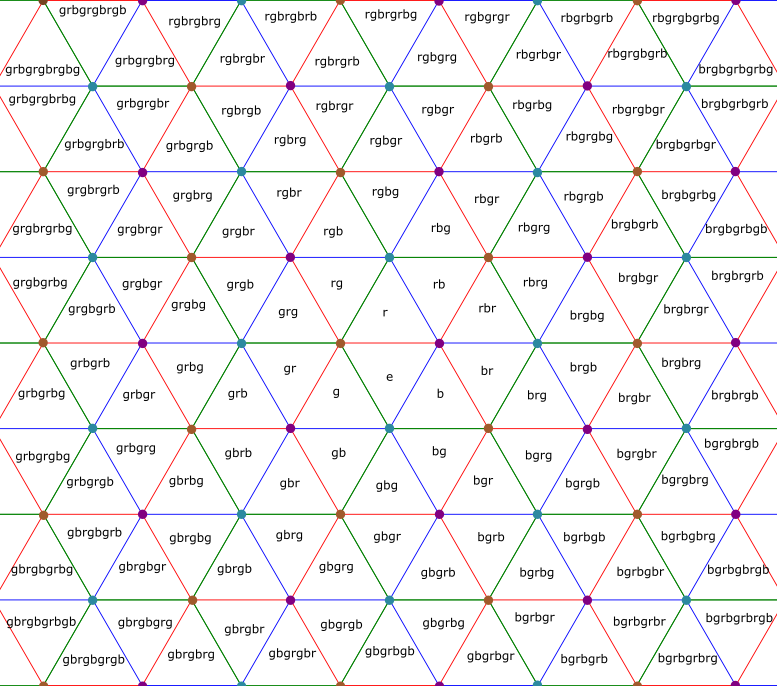}
  \captionof{figure}{$\Com(\widetilde{S}_3,\{\red,\gr,\bl\})$}
  \label{rgb}
\end{minipage}%
\begin{minipage}{.5\textwidth}
  \centering
  \includegraphics[width=.95\linewidth]{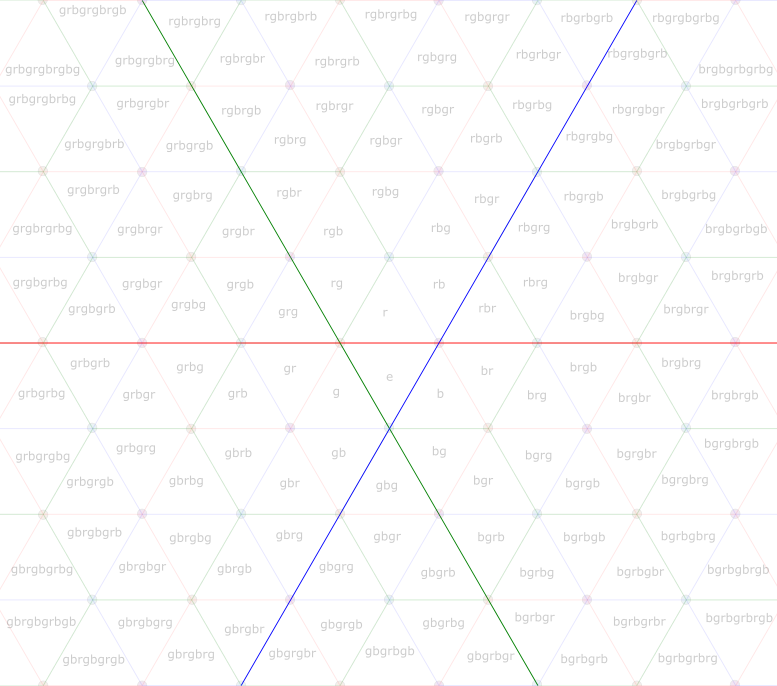}
  \captionof{figure}{Original $s$-hyperplanes}
  \label{rgbhyp}
\end{minipage}
\end{figure}
\begin{figure}
\centering
\begin{minipage}{.5\textwidth}
  \centering
  \includegraphics[width=.95\linewidth]{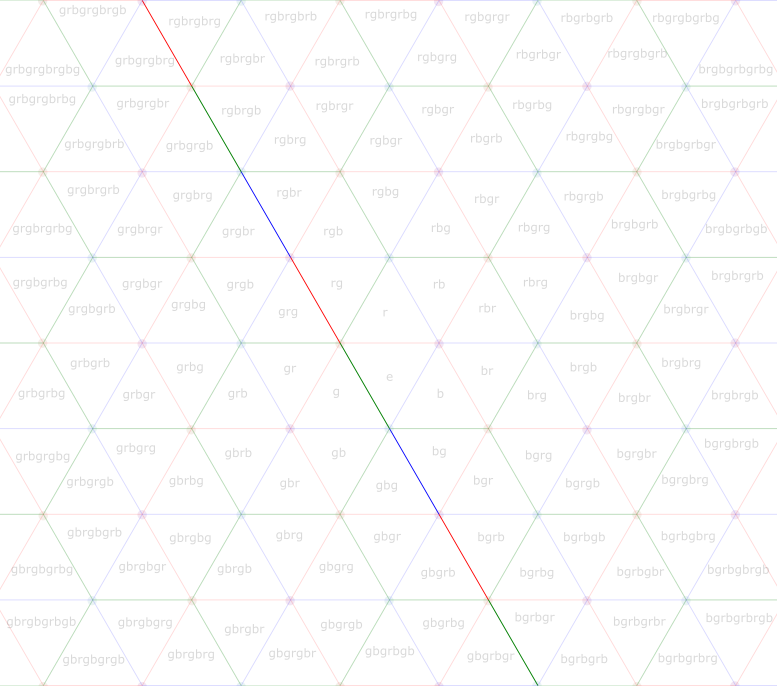}
  \captionof{figure}{\rrev{Maximal cells in} $\Com(\widetilde{S}_3,\{\red,\gr,\bl\},\{\gr\})$}
  \label{rgbg}
\end{minipage}%
\begin{minipage}{.5\textwidth}
  \centering
  \includegraphics[width=.95\linewidth]{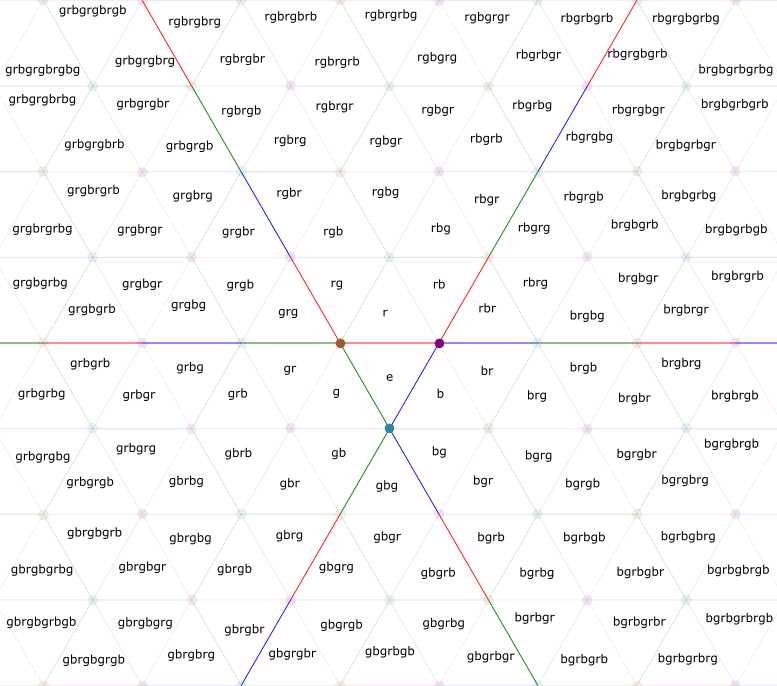}
  \captionof{figure}{All \rrev{cells corresponding to} core cosets}
  \label{rgbcore}
\end{minipage}
\end{figure}

\begin{itemize}
    \item The 2-dimensional (maximal) cells correspond to the elements $w\in W$, viewed as the trivial left cosets $wW_\emptyset$. One of the reduced expressions of each $w\in W$ indexes the cell in Figure~\ref{rgb}.
    \item The 1-dimensional cells correspond to the left cosets of the form $wW_s$ for $s\in \{\red,\gr,\bl\}$. Figure~\ref{rgb} displays $wW_{\red}$ in \textcolor{Mred}{red}, $wW_{\gr}$ in \textcolor{Mgreen}{green}, and $wW_{\bl}$ in \textcolor{Mblue}{blue}.
    \item The 0-dimensional cells correspond to the left cosets of the form $wW_{\{s,t\}}$ for $s\neq t\in \{\red,\gr,\bl\}$. \rev{Figure~\ref{rgb} displays each $wW_{\{s,t\}}$ as a small node adjacent to $s$- and $t$-colored 1-cells.}
\end{itemize}
Then the left and right actions of $W$ on $\Com(W,S)$ are described as follows.
\begin{itemize}
    \item 
The left action of a generator $s\in S$ is given by reflection with respect to the original $s$-hyperplane (see Figure~\ref{rgbhyp}), for example the left action of $\red\in S$ is the reflection along the line containing the \textcolor{Mred}{red} wall of the 2-cell indexed by the identity element $e\in W$.
\item The right action of a generator $s\in S$ 
is given by reflection along the $s$-wall, for example the 2-dimensional cells $w,w\red\in W$ are reflections of each other with respect to the \textcolor{Mred}{red} wall between them.
\end{itemize}
Thus the core cosets of the form $W_t w W_s = w W_s$, where $s,t\in S$, are those $1$-cells on the original $t$-hyperplane; for example see Figure~\ref{rgbg} for core cosets $W_{\gr} w W_s = w W_s$, where $s\in S$ is not fixed. Then see Figure~\ref{rgbcore} for all core cosets\rev{. Every coset of the form $W_\emptyset w W_\emptyset$ is core, while the only core cosets of the form $W_IwW_J$ for $|I|=|J|=2$ in this example are the trivial cosets $W_IeW_I$.}
\end{ex}

\subsection{Regular and singular expressions}

A reduced expression $w=s_1\cdots s_\ell$ in $(W,S)$ is read from the left to right to give the sequence
\[[e,s_1,s_1s_2,\ldots ,s_1s_1\cdots s_\ell].\] 
We view the sequence as a path going through the coresponding maximal cells in $\Com(W,S)$.
Here $e\in W$ denotes the identity element in the group $W$ and marks the starting point of the path.
One observes that, when moving from one maximal cell to the next, the path actually goes through the (codimension one) face shared by the two maximal cells:
\begin{equation}\label{eq.regex}
[\{e\}\subset \{e,s_1\}\supset \{s_1\}\subset\{s_1,s_1s_2\}\supset\{s_1s_2\}\subset\cdots 
\supset\{w\}]
=[e, eW_{s_1}, s_1, s_1W_{s_2}, s_1s_2 ,\cdots
, w].
\end{equation}
Note that in every `$\supset$' we make a choice of going \emph{forward}: take the maximal element from the previous step, e.g., take $s_1s_2$ from $\{s_1,s_1s_2\}=s_1W_{s_2}$. 
To record \eqref{eq.regex}, it is therefore enough to give the following list of parabolic subsets.
\begin{equation}\label{eq.realregex}
    [\emptyset,\{s_1\},\emptyset,\{s_2\},\emptyset,\cdots,\{s_\ell\},\emptyset]
\end{equation}

\begin{ex}\label{ex.aA2cont}
We continue Example~\ref{ex.aA2}.
 The regular (reduced) expression $w=rbgrbg$ in $(\widetilde S_3,\{\red,\gr,\bl\})$ gives the sequence
\[[e,r,rb,rbg,rbgr,rbgrb,rbgrbg]\] viewed as the (reduced) path in Figure~\ref{rgbregex}.
\end{ex}

A main idea of singular Coxeter combinatorics (see \cite{Wthesis} and \cite{EKo}) is that a path can go through even smaller cells, and that a path can end not only at some $\{w\}$ but at any $wW_J$.
More generally, a path can start at $W_I=W_I e W_I$ instead of at $\{e\}=W_\emptyset e W_{\emptyset}$, in which case our path is a sequence of double cosets rather than of left cosets.
In this generality, a \emph{(singular) expression} in $(W,S)$ is a sequence
\begin{equation}\label{eq.multiexp}
M_{\bullet} = [[I_0 \subset K_1 \supset I_1 \subset K_2 \supset \ldots \subset K_m \supset I_m]]. \end{equation}
where $I_i,K_i\subset S$ are \emph{finitary} subsets, i.e., $W_{I_i},W_{K_i}$ are finite.  
The finitary restriction is necessary when assigning a path to \eqref{eq.multiexp}: denoting the unique Bruhat maximal element of a finite double coset $q$ by $\ma{q}$, the (forward) path for \eqref{eq.multiexp} is
\[p_\bullet=[[p_0,p_1,\cdots,p_{2m}]]\quad\text{where}\quad p_{2i+1} = p_{2i}W_{K_i} \text{ and }p_{2i} = \rev{W_{I_0}}\ma{p_{2i-1}}W_{I_i}\]
Here $p_{2i+1}$ is an $(I_0,K_i)$-coset and $p_{2i}$ is an $(I_0,I_i)$-coset, and by $(I,J)$-coset we mean $(W_I,W_J)$-double coset; see Section~\ref{ss.prelim} for more details.
The path $p_\bullet$ ends at the $(I_0,I_m)$-coset $p_{2m}$.
Thus, analogously to the regular case\footnote{To be more precise, the singular expressions \rev{correspond to products in the Coxeter monoid $(W,*)$ (see Section~\ref{ss:singmon}), not the Coxeter group $W$. }
For reduced expressions, there is no difference between the two variations.} where \eqref{eq.regex} expresses the element $w$, the expression \eqref{eq.multiexp} expresses the double coset $p_{2m}$. We write in this case
\[p_{2m}\expr [[I_0 \subset K_1 \supset I_1 \subset K_2 \supset \ldots \subset K_m \supset I_m]].\]

Williamson \cite{Wthesis} also introduces the notion of reduced expressions (resp., reduced paths) for double cosets. Here is a reformulation.
\begin{defn}\cite[Definition 1.4]{EKo}
An expression of the form \eqref{eq.multiexp} is a \emph{reduced expression} of $p\in W_{I_0}\backslash W/W_{I_m}$ if 
    \begin{equation} \label{eq.pmarex}
    \rev{\ma{p} = w_{K_1} w_{I_1}\inv w_{K_2} w_{I_1}\inv w_{K_3} \cdots  w_{I_{m-1}}\inv w_{K_m},}
    \end{equation}
 where $w_I$ denotes the longest elements of the finite parabolic subgroup $W_I$, and we have
\begin{equation*} \rev{\ell(\ma{p}) =  \ell(w_{K_1}) - \ell(w_{I_1})+ \ell(w_{K_2}) - \cdots -\ell(w_{I_{m-1}}) + \ell(w_{K_m}).}
\end{equation*}
\end{defn}
See Definition~\ref{def.rex} for Williamson's definition, in a singlestep formulation, and more details. 
Then a \emph{reduced path} is a forward path associated to a reduced expression in the sense of the previous paragraph.
Loosely, a reduced path is a path which heads away from the origin in the Coxeter complex. 
A joint work with Elias \cite{EKo} introduces and classifies the braid relations for double cosets (see Section~\ref{ss:singmon} and Section~\ref{s.switchbacks}) and proves Matsumoto's theorem, i.e., that any two reduced expressions of the same double coset are related by a series of braid relations. 

\begin{ex}\label{ex.rgbsing}
We are in the setting of Example~\ref{ex.aA2}. The regular reduced expression in Example~\ref{ex.aA2cont} corresponds to the singular reduced expression
\[\{rbgrbg\}\expr [[\emptyset\subset\{\red\}\supset\emptyset\subset\{\bl\}\supset\emptyset\subset\{\gr\}\supset\emptyset\subset\{\red\}\supset\emptyset\subset\{\bl\}\supset\emptyset\subset\{\gr\}\supset\emptyset]]\]
which records the colors the associated path passes through in Figure~\ref{rgbregex}. 
The following is a singular reduced expression of a regular element that does not come from a regular expression, for which the reader can draw a path on Figure~\ref{rgb}.
\[ \{ grgbr \} \expr [[\emptyset\subset \{\red,\gr\}\supset \{\red\}\subset \{\red,\bl\}\supset \emptyset]]\] 
Here is a singular reduced expression of $p=W_{\{g\}}grW_{\{g,b\}}$
\[ p \expr [[\{\gr\}\subset\{\red,\gr\}\supset\{\gr\}\subset\{\gr,\bl\}]],\] 
whose path consists of proper double cosets. 
\end{ex}

\subsection{Atoms and atomic expressions}

Now suppose $p$ is a core $(I,J)$-coset, for some finitary subsets $I,J\subset S$. 
Then $C_p$ is a maximal cell in $\Com(W,S,I)$.
It is thus natural to ask whether there exist \emph{$I$-regular} (expressions and) paths in $\Com(W,S,I)$ 
that terminate at $p$.
By an \emph{$I$-regular} expression we mean an expression \eqref{eq.multiexp} such that $|I_i|=|I|$ and $|K_i|=|I|+1$ hold, exactly like a regular expression \eqref{eq.realregex} does. Its 
path alternates between two types of moves:
from a maximal cell to its codimension one face and from a codimension one face to a maximal cell,
exactly like a regular path \eqref{eq.regex} does.
The answer is positive and appears in a joint work with Elias-Libedinsky-Patimo \cite{KELP3} (see Proposition~\ref{prop.arexfinns} or \cite[Corollary 2.17]{KELP3}).
In fact, \cite{KELP3} shows that there is a reduced expression with the above condition and calls it an 
\emph{atomic reduced expression}. To explan the terminology, let us start with atoms. 

\begin{defn}
    An \emph{atom} (or an \emph{atomic coset}) is 
a core coset of the form $W_{K\setminus s}w_{K}W_{K\setminus t}$, where $K\subset S$ is finitary and $s,t\in K$.
\end{defn}
Here $w_K$ denotes the maximal element in $W_K$, and it is straightforward that the core coset condition implies $t=w_Ksw_K$. 
A further justification of the definition (in addition to that they are the simplest core cosets) is given by the fact that an atom has a unique reduced expression (see Proposition~\ref{prop.atom}). 
An \emph{atomic (reduced) expression} then refers to a (reduced) composition of atoms, identifying the atoms with their unique reduced expressions. For example, if $\at=W_{I}w_{K}W_{J}$ 
and $\atb = W_Jw_LW_M$ are atoms, then 
their unique reduced expressions are
\[\at\expr [[I\subset K\supset J]],\quad \atb \expr [[J\subset L\supset M]]\]
and we have
\[ \at\circ \atb \expr [[I\subset K\supset J\subset L\supset M]].\]
Thus the forementioned result of \cite{KELP3} says that a core coset decomposes as a reduced composition of atoms.
In all above senses, an atom, in the study of (core) double cosets, plays the same role as what a simple reflection $s\in S$ plays in Coxeter combinatorics.

\begin{ex}
We continue Example~\ref{ex.aA2}.
\begin{enumerate}
\item  There are in total nine atomic cosets in $(\widetilde{S}_3,\{\red,\gr,\bl\})$, which are
\begin{itemize}
    \item three 2-cells $W_\emptyset rW_\emptyset= rW_\emptyset$, $W_\emptyset gW_\emptyset=gW_\emptyset$, $W_\emptyset gW_\emptyset=bW_\emptyset$;
    \item six 1-cells 
    \[W_g rgW_r=\textcolor{Mred}{rgW_r},\qquad W_brbW_r=\textcolor{Mred}{rbW_r},\]
    \[W_rgrW_g=\textcolor{Mgreen}{grW_g},\qquad W_bgbW_g=\textcolor{Mgreen}{gbW_g},\]
    \[W_rbgW_b=\textcolor{Mblue}{brW_b},\qquad W_gbgW_b=\textcolor{Mblue}{bgW_b};\] 
    \item no 0-cells, because if $I\subset S$ has order two then $Is=S$ is not finitary;
\end{itemize}
 displayed in Figure~\ref{rgbatom}.  
 \item The core double coset $p=W_g rgbrgbrgW_{r}=\textcolor{Mred}{rgbrgbrgW_{\red}}$ has an  expression
 \[p\expr [[\{\gr\}\subset\{\gr,\red\}\supset\{\red\}\subset\{\red,\bl\}\supset\{\bl\}\subset\{\gr,\bl\}\supset\{\gr\}\subset\{\gr,\red\}\supset\{\red\}]]\]
 which is atomic and is depicted in Figure~\ref{rgbarex}. Writing it alternatively as a composition of atoms,
 \[p \expr W_grgW_r\circ W_rbrW_b\circ W_bgbW_g\circ W_grgW_r\]
 as depicted in Figure~\ref{rgbarex2}.
\begin{figure}
\centering
\begin{minipage}{.5\textwidth}
  \centering
  \includegraphics[width=.95\linewidth]{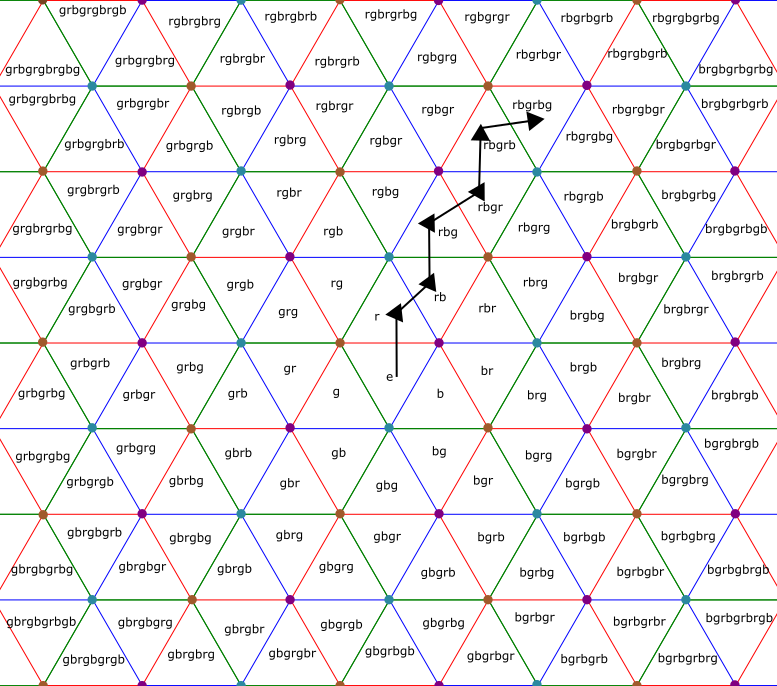}
  \captionof{figure}{A regular expression}
  \label{rgbregex}
\end{minipage}%
\begin{minipage}{.5\textwidth}
  \centering
  \includegraphics[width=.95\linewidth]{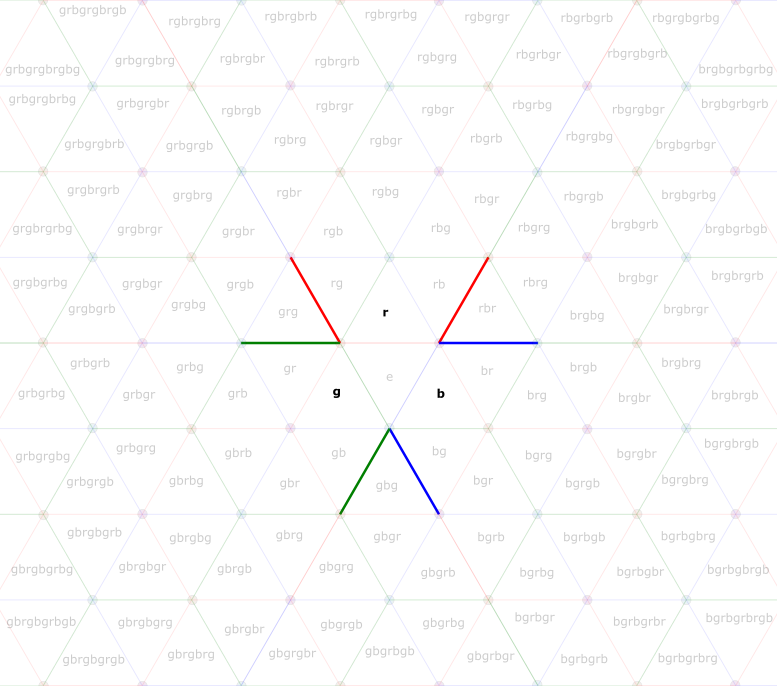}
  \captionof{figure}{Atoms in $(\widetilde{S}_3,\{\red,\gr,\bl\})$}
  \label{rgbatom}
\end{minipage}
\end{figure}
\begin{figure}
\centering
\begin{minipage}{.5\textwidth}
  \centering
  \includegraphics[width=.95\linewidth]{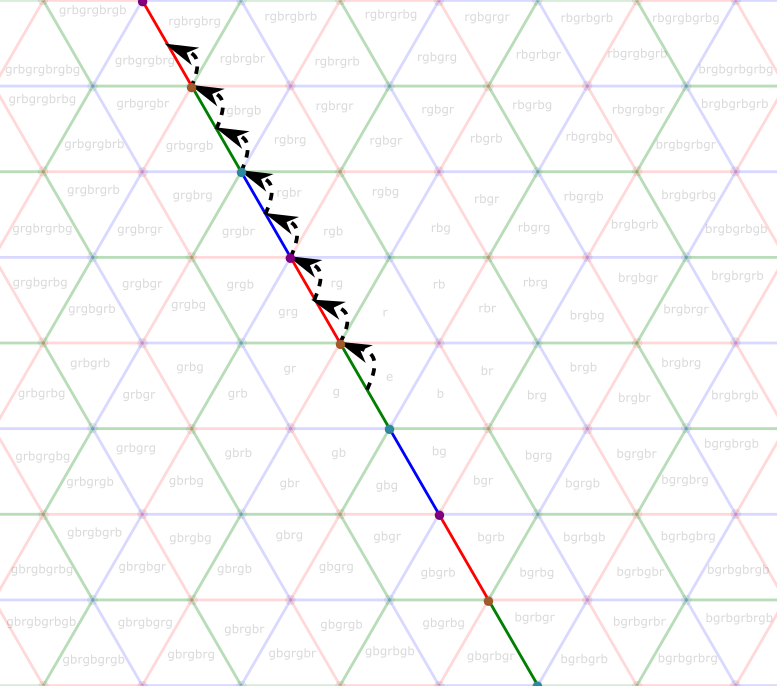}
  \captionof{figure}{An atomic expression of \\ $W_grgbrgbrgW_r=rgbrgbrgW_{\red}$}
  \label{rgbarex}
\end{minipage}%
\begin{minipage}{.5\textwidth}
  \centering
  \includegraphics[width=.95\linewidth]{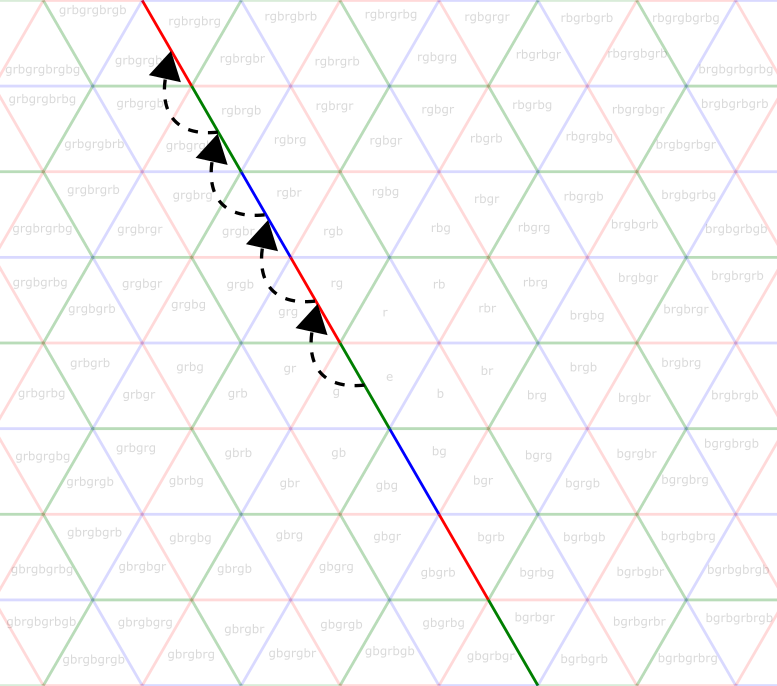}
  \captionof{figure}{The same expression \\ \rev{zoomed-out until 0-cells are invisible}}
  \label{rgbarex2}
\end{minipage}
\end{figure}

\end{enumerate}
\end{ex}

\subsection{Atomic braid relations and Matsumoto's theorem}\label{ss.intromats}
The purpose of the current work is to understand the relations between atomic expressions. For reduced expressions, we achieve this in the atomic Matsumoto theorem:

\begin{thm}[Theorem~\ref{mats}]\label{thm.matsintro}
Two atomic reduced expressions of the same core coset are related by \emph{atomic braid relations}. 
\end{thm}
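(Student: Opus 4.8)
The plan is to reduce the statement to the singular Matsumoto theorem of \cite{EKo} and then to \emph{atomize} the resulting chain of braid moves. Recall from \eqref{eq.multiexp} that an atomic reduced expression is in particular a singular reduced expression; hence the two given atomic reduced expressions of the core coset $p$ are two vertices of the singular reduced expression graph of $p$, which is connected by \cite{EKo}. This produces a chain
\[
A \;=\; E^{0}\;\to\; E^{1}\;\to\;\cdots\;\to\;E^{N}\;=\;A'
\]
of singular reduced expressions of $p$ in which each arrow $E^{j}\to E^{j+1}$ is a single singular braid relation drawn from the classification in \cite{EKo}. The only obstacle to concluding directly is that the intermediate $E^{j}$ need not be $|I|$-regular, hence need not be atomic, and the singular moves between them are not \emph{a priori} atomic braid relations.

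To repair this I would prove a \emph{lifting lemma}. To each singular reduced expression $E^{j}$ I attach an atomic reduced expression $\widehat{E^{j}}$ by applying Proposition~\ref{prop.arexfinns} to refine the path of $E^{j}$ into an $|I|$-regular path through $\Com(W,S,I)$, using Proposition~\ref{prop.atom} to identify each refined one-step piece with the unique reduced expression of the atom it produces. Since $\widehat{E^{0}}=A$ and $\widehat{E^{N}}=A'$ are already atomic, it then suffices to show that adjacent refinements $\widehat{E^{j}}$ and $\widehat{E^{j+1}}$ are connected by atomic braid relations. Because a singular braid relation alters only a bounded window of an expression, the two refinements can be chosen to agree outside an enlarged window, reducing the claim to a finite check: each type of singular braid relation, once atomized, is realized by a composition of the atomic braid relations defined earlier.

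I expect the crux of the argument to lie precisely in this last reduction, and it has two faces. First, the atomic refinement is not canonical — different orders of resolving the intermediate core sub-cosets into atoms yield different atomic expressions — so one must establish a confluence (diamond-lemma) statement: any two atomizations of a single singular reduced expression are themselves related by atomic braid relations. Second, the local window governing a singular braid move need not itself be a core coset, so its atomization is not intrinsic and must be controlled relative to the ambient regular structure; here the length function and Bruhat order on the new combinatorial object, together with its realization as a Tits cone intersection in the sense of Iyama–Wemyss, do the real work by supplying a well-defined notion of minimality that atomic braid moves preserve. The confluence statement is the atomic analogue of the classical fact that rank-two residues generate all relations among minimal galleries, and proving it amounts to verifying that the classified atomic braid relations are \emph{complete}: atomizing any single singular braid relation produces no equivalence beyond the one they generate.

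A more self-contained alternative, avoiding the detour through \cite{EKo}, would mimic Tits' original induction: using the length function and Bruhat order of the new structure, establish an exchange/deletion condition for the \emph{initial} atom of a reduced composition and induct on atomic length. There the key step is to show that any two initial atoms $\at,\at'$ of $p$ admit a common core ``dihedral'' overcoset dividing $p$ on the left, whose two atomic reduced expressions — one beginning with $\at$, the other with $\at'$ — differ by a single atomic braid relation; the induction then closes exactly as in the classical proof. Establishing the existence of this minimal common overcoset, the analogue of the longest element $w_{st}$ of a rank-two parabolic, would be the principal obstacle on this route, and it too is most naturally extracted from the Tits cone intersection geometry.
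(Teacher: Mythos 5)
Your proposal is not yet a proof: in both of your routes the decisive step is stated as a goal rather than established, and it is precisely the step that carries all the difficulty. In your first route, the claim that ``atomizing any single singular braid relation produces no equivalence beyond the one generated by atomic braid relations'' is left as a finite check that you never perform, and there is reason to doubt it can be performed locally: the intermediate expressions produced by singular braid moves pass through cosets that are not core (for a switchback coset this is Remark~\ref{rem.redundsb}), so they admit no atomization at all, and the paper itself explicitly declines to claim that arbitrary chains of singular braid relations between two atomic reduced expressions decompose into atomic braid relations (see the remark following Theorem~\ref{mats}, which warns against working with the full subgraph of the rex graph in $\SC$). So the confluence statement you would need is both unproven and delicate; it is not a routine diamond lemma.

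Your second route is, structurally, the paper's actual argument: induction on atomic length, splitting off a common first atom when the initial letters agree, and otherwise exhibiting a common left-dividing core coset whose two atomic reduced expressions (one starting with each initial atom) differ by a single atomic braid relation. But you defer the existence of this ``dihedral overcoset'' to unspecified Tits cone geometry, whereas this is exactly what must be proved, and the paper proves it with no geometry at all: for $s\neq t$ one has $s,t\in\leftdes(\ma{\pcoset})$ by \cite[Corollary 4.20]{EKo}, the high road \cite[Proposition 4.21]{EKo} gives $\pcoset\expr[I,Is,Ist]\circ J_\bullet$, Kilmoyer's theorem forces the $(Ist,J)$-coset $\qcoset$ to have $\leftred(\qcoset)=\mi{\qcoset}\inv J\mi{\qcoset}$ of corank two in $Ist$, and the low road (Proposition~\ref{prop.lowroad}) then yields $\pcoset\expr[I+s+t-v-u]\circ \qcoset^{\core}$. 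Applying the switchback relation and checking via Lemma~\ref{lem.claim} (a redundancy argument, not a geometric one) that the result is atomic produces the two expressions beginning with the $s$-atom and the $t$-atom; Proposition~\ref{prop.suds} identifies the switchback--upup/downdown--switchback zigzag between them as one atomic braid relation, and the induction hypothesis (applied after stripping the first atom) closes the argument. Until you supply a proof of the overcoset existence at this level of precision --- in particular the control of redundancies that makes the switchback output atomic --- the proposal remains an outline with the central lemma missing.
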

We refer to Definition~\ref{def.braid} for its definition, but an atomic braid relation is of the form
\begin{equation}\label{eq.atomicbr}
\underbrace{\at \circ \atb' \circ \at''\cdots}_{m} = \underbrace{\atb \circ \at' \circ \atb''\cdots}_{m}     ,
\end{equation}
where $\at,\atb$ are atoms; the atoms $\at',\at'',\cdots$ and  $\atb',\atb'',\cdots$ are certain twists of the atoms $\at$ and $\atb$, respectively, determined by their position in the composition; and $m=m_{\at,\atb}\geq 2$ is an integer (not necessarily equal to one of the entries in the Coxeter matrix for $(W,S)$). See also Proposition~\ref{prop.braid} and Proposition~\ref{prop.suds}. Here is an example in type $D_4$ (our running example $\widetilde S_3$ is unfortunately too small to have nontrivial atomic braid relations) where no twists occurs; see Examples~\ref{ex.ab2},~\ref{ex.abE8}, Section~\ref{s.dihedral}, Section~\ref{ss.trihedral} for examples with twists.

\begin{ex}[Example~\ref{ex.ab4}]
Let $(W,S)$ be of type $D_4$ where $S=\{1,2,3,4\}$ is indexed so that $2$ is at the center of the Dynkin diagram. Let us use the shorthand $\{1,2,3\}=123$, etc. for subsets of $S$. Then
\[\at\expr[[13\subset 123\supset 13]],\qquad \atb\expr[[13\subset 134\supset 13]]\] are atomic cosets. Their atomic braid relation is
\[\at\circ\atb\circ\at\circ\atb\expr \atb\circ\at\circ\atb\circ \at,\]
where the left hand side is identified with the expression
\[[[13\subset 123\supset 13\subset 134\supset 13\subset 123\supset 13 \subset 134\supset 13]]\]
and similarly for the right hand side.
\end{ex}

We point out a key feature of our braid relations, a feature the braid relations from \cite{EKo} do not enjoy: the two sides of \eqref{eq.atomicbr} are of the same width. In particular, Theorem~\ref{thm.matsintro} implies that every atomic reduced expression of a coset has the same width.
Due to this feature, the combinatorics of atomic reduced expressions very much resembles that of regular reduced expressions in a Coxeter group, bringing us a substantial computational and pedagogical advantage over the presentation in \cite{EKo}.

\subsection{An atomic presentation}\label{ss.intatomicpres}
The non-reduced (atomic or not) expressions depend on different natural ways of composing double cosets in a non-reduced way. We consider two of them, namely the singular Coxeter monoid $\SC$ studied in \cite{EKo} (see Section~\ref{ss:singmon}) and the nilCoxeter algebroid $\Dem$ studied in \cite{KELP1} (see Section~\ref{ss.dem}). 
As is implicit above, we use the adjective  `singular' to mean `of parabolic double cosets'. 
In particular $\SC$ is the double coset analogue of the Coxeter monoid, and $\Dem$ is that of the nilCoxeter algebra, sometimes called the nilHecke algebra (our notation comes from $\Dem$ being equivalent to the category of singular Demazure operators, i.e., Demazure operators associated to parabolic double cosets; see \cite{Demazure,KELP1}). The two settings differ only in the quadratic relations, namely, $ss=s$ in the Coxeter monoid and $ss=0$ in the nilCoxeter algebra.
As can be guessed from the quadratic relation, it is easier to deal with non-reduced expressions in $\Dem$: they are equal to zero. 

As our second main result, we provide in Theorem~\ref{thm.presentDemazure} a presentation of the core cosets in $\Dem$ by generators and relations. Theorem~\ref{thm.presentDemazure} \rev{generalizes} the (nil)Coxeter presentation of Demazure operators \cite[Theoreme 1 and Proposition 3]{Demazure}. 

\subsection{Why core cosets?}

We restricted ourselves to core double cosets in Sections~\ref{ss.intromats},\ref{ss.intatomicpres}. Let us justify \rev{this}.
Let $p$ be an $(I,J)$-coset. The \emph{left (resp. right) redundancy} of $p$ is the subset
\begin{equation*}\label{introeq.red}
    \leftred(p) = I\cap \mi{p}J\mi{p}\inv\subset I,\quad \text{resp.,}\quad \rightred(p) = \mi{p}\inv I\mi{p} \cap J\subset J,
\end{equation*}
where $\mi{p}$ denotes the unique minimal element in $p$ (see Section~\ref{ss.prelim} for more details).
We have $W_{\leftred(p)} = W_I\cap \mi{p}W_J\mi{p}$ (see e.g., \cite[Lemma 2.12]{EKo}), which says that the redundancy encodes the redundancy in combining the left action and the right action of $W_I$ and $W_J$. 
In particular, the coset $p$ is core if and only if $I=\leftred(p)$ and $J=\rightred(p)$. 
Then a result from \cite{EKo} (see Proposition~\ref{prop.lowroad}) says that $p^{\core}=W_{\leftred(p)}\mi{p}W_{\rightred(p)}$ is a core $(\leftred(p),\rightred(p))$-coset and our coset $p$ has a reduced expression of the form 
\begin{equation}\label{introeq.lowroad}
[[I\supset \leftred(p)]]\circ M_\bullet \circ [[\rightred(p)\subset J]]    
\end{equation}
where $M_\bullet$ is a reduced expression for $p^{\core}$. 
This allows us to reduce problems on arbitrary double cosets to that on core cosets. 

For example, if an $(I,J)$-coset $p$ corresponds to a cell in $\Com(W,S,I)$, then we have $\leftred(p)=I$, $\rightred(p)=\mi{p}\inv I\mi{p}\subset J$, and $p^{\core}=W_I\mi{p}W_{\rightred(p)}$. The cell $p$ is not maximal in $\Com(W,S,I)$ if and only if the last inclusion $\rightred(p)\subset J$ is strict.
In this case $p$ is a face of the maximal cell $p^{\core}$ in $\Com(W,S,I)$.
Therefore, if we take a reduced path $p_\bullet$ in $\Com(W,S,I)$ for $p^{\core}$ then we obtain such a path for $p$ by adding to $p_\bullet$ the final extra step of restricting to a face. This extra step is the postcomposition $-\circ [[\rightred(p)\subset J]]$ in \eqref{introeq.lowroad}.
 
\subsection{Motivation from Lie theory}
Now that we have given purely combinatorial motivations for our work, let us explain our original motivations from the study of Hecke categories. 
In a joint work \cite{KELP4} with Elias, Libedinsky, Patimo, we construct a cellular-like basis of the singular Hecke category (also known as the \rev{category of }singular Soergel bimodules \cite{SingSb, Wthesis}) associated to $(W,S)$. The latter is a categorification of the Hecke algebroid and is the double coset analogue of the regular Hecke category (Soergel bimodules).
In constructing our basis elements, called \emph{light leaves}, a crucial step is to find desirable (with conditions depending on the situation) reduced expressions of double cosets.
The result of \cite{EKo} providing reduced expressions of the form \eqref{introeq.lowroad} is one main ingredient for \rev{this} step. Another important ingredient is to find a reduced expression $M_\bullet$ of a core coset.
In type $A$ and type $B$, as proved in \cite{KELP3}, the atomic reduced expressions are exactly the same as the regular reduced expressions in type $A$ and type $B$ respectively, and thus finding $M_\bullet$ reduces to the well-understood problem of finding a regular reduced expression of an element in a Coxeter group (of type $A$ and $B$).
This vastly simplifies the basis construction in finite and affine types $A$ and $B$.
In fact, the latter is an original motivation of \cite{KELP3} for initiating the atomic theory of Coxeter systems.

\subsection{Tits cone intersections and applications}

Finally, we mention an unexpected (or, expected but unexpectedly strong; see \cite[Remark 1.17]{EKo}) connection to a work of Iyama-Wemyss \cite{IW}, discovered in the final stage of the project. 
\cite{IW} introduces a new combinatorial structure $\cone(W,S,I)$, associated to a Coxeter system $(W,S)$ and a subset $I\subset S$, which they call a \emph{Tits cone intersection}.
They develop \rev{the} combinatorics of Tits cone intersections in affine type $ADE$ to describe the tilting theory for contracted preprojective algebras (certain idempotent summands of preprojective algebras) as well as the noncommutative resolutions for compound Du Val singularities, with eventual applications to birational geometry.   
\rev{T}his $\cone(W,S,I)$ corresponds exactly to $\Com(W,S,I)$ discussed above (under the construction of $\Com(W,S)$ as a quotient of the Tits cone; see Section~\ref{s.IW}). 
In particular, the chambers in $\cone(W,S,I)$ are labeled by the core $(I,J)$-cosets (see Proposition~\ref{prop.IW}).
When we explain the connection in Section~\ref{s.IW}, we work in the Tits cone instead of the Coxeter complex for a more convenient comparison to \cite{IW}, but the explanation is implicit in Section~\ref{ss.com} above and in \cite[Part I]{IW}.

This provides $\cone(W,S,I)$ with additional information obtained in the current paper, in particular, the atomic braid relations, the atomic Matsumoto theorem, and a presentation by generators and relations (see 
Proposition~\ref{prop.basis}).
Conversely, the results in \cite{IW} shed a new light on the singular Coxeter monoid $\SC$ and the nilCoxeter algebroid $\Dem$. 
We do not explore such consequences in the current paper, but content with 
referring to \cite[Theorem 0.5 and Section 4.2]{IW} for a beautifully illustrated classification of the arrangements of $(I,J)$-core cosets, where $I$ is fixed of order $|S|-3$ and $S$ is of affine type $ADE$.

\subsection*{Organization of the paper}

Section~\ref{s.prelim} introduces some algebraic structures commonly associated to a Coxeter system: Section~\ref{ss.prelim} recalls basic definitions and fix notation; Section~\ref{ss:singmon} recalls the singular Coxeter monoid $\SC$; Section~\ref{ss.dem} recalls the nilCoxeter algebroid $\Dem$.
Section~\ref{s.atom} introduces core cosets and the atom(ic coset)s and state necessary facts from \cite{KELP3}.
Section~\ref{s.switchbacks} explains the switchback relations from \cite{EKo} and  how they gives rise to atomic braid relations.
Section~\ref{s.mats} proves the atomic Matsumoto theorem.
Section~\ref{s.nonbraid} discusses atomic non-braid relations and give a presentation by generators and relations of the nilCoxeter algebroid.
In Section~\ref{s.dihedral} and Section~\ref{s.highrank}, we consider the substructures $\coreJ{J}$ of $\SC$ and $\Dem^{\core}_J$ of $\Dem$ consisting of the $(I,J)$-core cosets and study their combinatorics.
Section~\ref{s.dihedral} completely describe $\coreJ{J}$ and $\Dem^{\core}_J$ when $|S\setminus J|=2$; Section~\ref{s.highrank} discusses the weak Bruhat order on $\coreJ{J}$, resp., $\Dem^{\core}_J$, and computes some examples when $|S\setminus J|>2$.
Section~\ref{s.IW} relates our $\coreJ{J}$ and $\Dem^{\core}_J$ with \cite{IW}'s Tits cone intersection.

\subsection*{Acknowledgements.}
The main ideas in the paper came from discussions with Ben Elias, Nicolas Libedinsky and Leonardo Patimo during our joint expedition in the singular land; additional thanks to ELP for agreeing about posting the current paper before its prequel \cite{KELP3}.
We thank Nicolas Libedinsky for providing the background image for the figures in the introduction.
We thank Valentin Buciumas for helpful comments on the previous version.
The technical portion of the project benefited from examples of atomic expressions computed on \rev{SageMath \cite{sage}.}
We thank the referee for helpful comments. The author was partially
supported by the Swedish Research Council.

\part{Preliminaries}
\section{Singular Coxeter presentations}\label{s.prelim}

In this section we collect some basics of Coxeter combinatorics and recall two presentations of parabolic double cosets by generators and relations, from \cite{EKo} and \cite{KELP1} respectively.
Let $(W,S)$ be a Coxeter system. 

\subsection{Coxeter group preliminaries}\label{ss.prelim}

Given an element $x\in W$ the (Coxeter) \emph{length} $\ell(x)$ is the smallest length of an expression of $x\in W$, i.e., the smallest $\ell$ where $x=s_1s_2\cdots s_\ell$ for some $s_i\in S$.
A product $xy$, for $x,y\in W$, is said to be \emph{reduced} if $\ell(xy)=\ell(x)+\ell(y)$. In this case we write $xy=x.y$.
An expression $w=s_1s_2\cdots s_m$, where $s_1,s_2,\cdots,s_m\in S$, is called a \emph{reduced expression} if every product involved in computing the expression, in any order, is reduced.
The latter condition is equivalent to the product
\[(s_1\cdots s_i)s_{i+1}\]
being reduced for each $i=1,\cdots, m-1$, which also is equivalent to the length $m$ of the expression being the shortest possible, i.e., $m=\ell(x)$.
The \emph{left (resp., right) descent} set of $w\in W$ is defined and denoted as
\[\leftdes(w) = \{s\in S\ |\ sw\text{ is not reduced}\},\quad \text{ resp., }\quad \rightdes(w)=\{s\in S\ |\ ws\text{ is not reduced}\}.\]

A subset $I\subset S$ is said to be \emph{finitary} if the parabolic subgroup $W_I$ generated by $I$ is finite.
Given finitary $I,J\subset S$\rev{, we call an element of $W_I\backslash W/W_J$ an $(I,J)$-coset.}

\rev{An $(I,J)$-coset $\pcoset$ has a unique maximal element, denoted by $\ma{p}$, and a unique minimal element, denoted by $\mi{p}$, with respect to the Bruhat order (see, e.g., \cite[Lemma 2.12]{EKo}).
The \emph{left (resp. right) redundancy} of an $(I,J)$-coset $\pcoset$ is 
\begin{equation}\label{eq.red}
    \leftred(p) = I\cap \mi{p}J\mi{p}\inv,\quad \text{resp.,}\quad \rightred(p) = \mi{p}\inv I\mi{p} \cap J.
\end{equation}}
Howlett's theorem \cite[Lemma 2.12]{EKo} says that, when writing $w\in p = W_I\mi{p}W_J$ as a product $x\mi{p} y$, the choice of elements $x\in W_I$, $y\in W_J$ is unique up to the action of the redundancy, that is, the surjective map
    \begin{equation} 
        \begin{aligned}
            W_I\times W_J  &\to p  \\
            (x,y) &\mapsto x\mi{p}y 
        \end{aligned}
    \end{equation}
induces the bijections
\begin{equation}\label{eq.Howl}
(W_I/W_{\leftred(p)})\times W_J\to p\quad\text{ and }\quad W_I\times (W_{\rightred(p)}\backslash W_J)\to p.
\end{equation}
\cite[Lemma 2.12]{EKo} also says that, if $x\in W_I$ is a minimal representative in $xW_{\leftred(p)}$ and $y\in W_J$, or if $x\in W_I$ and $y\in W_J$ is minimal in $W_{\rightred (p)}y$, then we have $x\mi{p}y=x.\mi{p}.y$. 

\subsection{Coxeter monoids}\label{ss:singmon}

This subsection briefly introduces the regular and singular Coxeter monoids. We refer to  \cite[Section 1.3, Section 2]{EKo} for details.

The \emph{Coxeter monoid} (also known as the \emph{0-Hecke monoid} or the \emph{star monoid}) $(W,S,*)$ is defined as the monoid with the following presentation by generators and relations. The generator is the set $S$; the generating relations are
\begin{itemize}
    \item 
the $*$-quadratic relation for $s \in S$ 
\[s*s = s;\] 
\item
the braid relation 
for $s,t\in S$ with $m=m_{st}<\infty$
\[\underbrace{s * t * \cdots}_{m} = \underbrace{t * s * \cdots}_{m} .\]\end{itemize}
An expression $s_1*s_2*\cdots *s_k$ is \emph{reduced} if its width $k$ is minimal among that of the expressions of the same element in $(W,S,*)$. 
If $s*t*\cdots *u$ is a reduced expression in $(W,*)$, then $st\cdots u$ is a reduced expression in the Coxeter group $W$, and we identify the element $w=s*t*\cdots *u\in (W,*)$ with the element $w=st\cdots u\in W$. 
This identification imports the length function $\ell$ on the Coxeter group $W$ to the Coxeter monoid $(W,*)$ and justifies the following (abuse of) notation: we write $x*y=x.y$ in $(W,*)$ if $\ell(xy)=\ell(x)+\ell(y)$.

\begin{defn}
The \emph{singular Coxeter monoid} is the category $\SC=\SC(W,S)$ whose
\begin{itemize}
    \item  objects are the finitary subsets $I\subset S$;
    \item morphisms, from $I$ to $J$, are the $(I,J)$-cosets, i.e., $\Hom_\SC(I,J) = W_I \backslash W/W_J$;
    \item composition `$*$', of $\pcoset\in W_I \backslash W/W_J$ and $\qcoset\in W_J \backslash W/W_K$, is given by 
    \[p*\qcoset = W_I (\ma{p}*\ma{q}) W_K,\]
    for finitary subsets $I,J,K\subset S$.
\end{itemize}
\end{defn}

The category $\SC$ is given a presentation by generators and relations in \cite[Theorem 5.31]{EKo}. 
The generators are the double cosets of the form $W_IeW_{Is}$ and $W_{Is}eW_{I}$, where $Is:=I\sqcup s$ is our notation and $Is\subset S$ is finitary.
The composition of these generators are expressed in two ways. 
One way is to list the finitary subsets appearing:
\begin{equation}\label{IInotation}
    [I_0,I_1,I_2,\cdots ,I_m] := (W_{I_0}eW_{I_1})*(W_{I_1}eW_{I_2})*\cdots *(W_{I_{m-1}}eW_{I_m})
\end{equation}
The other way is to list the differences between the finitary subsets:
\begin{equation}\label{pmnot}
[I \pm_1 s_1 \pm_2 s_2 \cdots \pm_m s_m] := [I_0,I_1,I_2\cdots ,I_m]    
\end{equation}
Here $\pm_i\in\{+,-\}$ and $s_i$ are such that, either $I_is_i=I_{i-1}$ and $\pm_i = -$ or $I_i=I_{i-1}s_i$ and $\pm_i = +$. We call $m$ the \emph{width} of the expression \eqref{pmnot}. (See \cite[Section 3.4]{EKo} for discussions on the \emph{length} of an expression, which \rev{we will not need.)}

Under notation \eqref{pmnot}, the generating relations are 
\begin{itemize}
    \item the \emph{up-up relations} \[[I+s+t] \expr [I+t+s] ;\]
\item the \emph{down-down relations} 
\[[I-s-t] \expr [I-t-s];\]
\item the \emph{switchback relations} 
\[ [I + \sblue - \teal] \expr [I - \textcolor{black}{u_1}+\textcolor{RoyalBlue}{u_0} -\textcolor{black}{u_2} +\textcolor{black}{u_1} -u_3+\textcolor{black}{u_2} \ \ldots\ -u_{d-1}+u_{d-2} -\textcolor{Green}{u_d}+u_{d-1}];\]
\item the \emph{$*$-quadratic relations} 
\[[I-s+s] \expr [I];\]
\end{itemize}
where \rev{$I,s,t$ are such that the expressions are well-defined, e.g., $s,t\not\in I$ and $Ist\subset S$ is finitary for the up-up relation. The only additional condition on $I,s,t$ is that a switchback relation requires $w_{Is}\sblue w_{Is} \neq \teal$.}
The elements $u_i\in I\sblue$ are explained in Section~\ref{s.switchbacks} in detail (see in particular Definition~\ref{def:useq} where we call $u_\bullet$ the rotation sequence associated to $(I\sblue,\sblue,\teal)$).
The first three classes of relations are called the \emph{braid relations}. 

\begin{defn}\label{def.p*q}
For finitary $I,J,K\subset S$, an $(I,J)$-coset $\pcoset$, and a $(J,K)$-coset $\qcoset$, the composition $\pcoset*\qcoset$ is said to be \emph{reduced} if $(\ma{\pcoset}w_J\inv).\ma{\qcoset}$, or equivalently $\ma{p}.(w_J\inv \ma{q})$, is reduced. 
\end{defn}

\begin{defn}\label{def.rex}
An expression 
\[I_\bullet = [I_0,I_1,\cdots,I_m]=(W_{I_0}eW_{I_1})*(W_{I_1}eW_{I_2})*\cdots *(W_{I_{m-1}}eW_{I_m})\] is \emph{reduced} if the composition
of the $(I_0,I_{i})$-coset $\pcoset_i = (W_{I_0}eW_{I_1})*(W_{I_1}eW_{I_2})*\cdots *(W_{I_{i-1}}eW_{I_i})$ and the $(I_i,I_{i+1})$-coset $W_{I_{i}}eW_{I_{i+1}}$
is reduced for each of $i=1,\cdots m-1$. 
If $1\leq i \leq m-1$ is such that $I_i\supset I_{i+1}$ then the latter condition at $i$ is automatic; if $I_i\supset I_{i+1}$ then reducedness at $i$ is equivalent to the two conditions $\mi{p_i}=\mi{p_{i+1}}$ and $\leftred(p_i)=\leftred(p_{i+1})$. 
\end{defn}

The condition in Definition~\ref{def.rex} is equivalent to the more symmetric condition 
\[I_\bullet =(W_{I_0}eW_{I_1}).(W_{I_1}eW_{I_2}).\cdots .(W_{I_{m-1}}eW_{I_m}),\]
by which we mean every composition that can be involved in the right hand side, in all possible order of composition, is reduced.
See \cite[Section 3]{EKo} for other equivalent criteria and explanations; our formulation in Definition~\ref{def.rex} in terms of redundancy is the original definition from Williamson~\cite{Wthesis}.

\rev{
The anti-involution $w\mapsto w\inv$ on $W$ induces the contravariant involution on $\SC$ which fixes the objects and sends $p = W_I w W_J$ to $p\inv:=W_J w\inv W_I$.
This is compatible with reversing expressions, resp., reduced expressions, as in
\begin{equation*}
    [I_0,I_1,\cdots,I_m]\inv := [I_m,\cdots,I_1,I_0].
\end{equation*}
\begin{prop}\cite[Proposition 4.7]{EKo}\label{prop.inv}
Let $I_\bullet\expr p$. Then $(I_\bullet)\inv\expr p\inv$, and $I_\bullet$ is reduced if and only if $I_\bullet\inv$ is reduced.
\end{prop}

We end the subsection with the following analogue of Matsumoto's theorem.
\begin{thm}\cite[Theorem 5.30]{EKo}]\label{thm.mats}
 Every reduced expression of an $(I,J)$-coset is related by the braid relations, namely, by the upup, downdown, and switchback relations.   
\end{thm}
}

\subsection{NilCoxeter algebras}\label{ss.dem}

This subsection introduces the nilCoxeter algebras and nilCoxeter algebroids. We follow \cite[Section 3]{KELP1} (see also \cite[Remark 2.25]{KELP3}) and refer to it for all omitted details.

The \emph{nilCoxeter algebra} $\NCA(W,S)$ associated to a Coxeter system $(W,S)$ is the $\kk$-linear algebra presented by generators and relations as follows. 
The set of generators is $\{\partial_s\ |\ s\in S\}$;
the generating relations are
\begin{itemize}
    \item the Coxeter braid relations, that is, for $s,t\in S$ with $m=m_{st}<\infty$
\[\underbrace{\partial_s  \partial_t  \cdots}_{m} = \underbrace{\partial_t  \partial_s \cdots}_{m} \]
\item the \emph{nil-quadratic relations}, that is, for $s\in S$ 
\[\partial_s \partial_s = 0.\]
\end{itemize}
Then $\NCA(W,S)$ has a $\kk$-basis indexed by the elements in $(W,S)$, or equivalently the elements in $(W,S,*)$, and moreover has the same reduced expressions as for the Coxeter group $(W,S)$ or the Coxeter monoid $(W,S,*)$.
In fact, $\NCA(W,S)$ is the associated graded \rev{algebra} of the group algebra $\kk W$ (resp., the linearization of the Coxeter monoid $(W,*)$) filtered with respect to the length function.

\begin{defn}
The \emph{nilCoxeter algebroid}, or the \emph{singular Demazure algebroid}, $\Dem=\Dem(W,S)$ is the $\kk$-linear category whose 
objects are the finitary subsets $I\subset S$ 
and whose morphisms and their composition are defined via the following presentation by generators and relations.
\begin{itemize}
        \item The generating morphisms are \begin{equation}\label{eq.demgen}
            \partial_{[I,Is]}:Is\to I\quad\text{and}\quad \partial_{[Is,I]}:I\to Is,
        \end{equation}
        for $Is\subset S$ finitary. That is, a morphism in $\Dem(W,S)$ is a $\kk$-linear combination of compositions of some morphisms of the form \eqref{eq.demgen}. We denote such compositions as expressions as in \eqref{IInotation} and \eqref{pmnot}, for example, $\partial_{[I+s-s]}=\partial_{[I,Is,I]}:=\partial_{[I,Is]}\circ \partial_{[Is,I]}$.
        \item The generating relations are
        \begin{itemize}
\item the \emph{up-up and down-down relations} \[\partial_{[I+s+t]} = \partial_{[I+t+s]}, \quad
\partial_{[I-s-t]} = \partial_{[I-t-s]};\]
\item the \emph{switchback relation} 
\[ \partial_{[I + \sblue - \teal]} = \partial_{[I - \textcolor{black}{u_1}+\textcolor{RoyalBlue}{u_0} -\textcolor{black}{u_2} +\textcolor{black}{u_1} -u_3+\textcolor{black}{u_2} \ \ldots\ -u_{d-1}+u_{d-2} -\textcolor{Green}{u_d}+u_{d-1}]};\]
\item the \emph{nil-quadratic relation}
\[\partial_{[I-s+s]}= 0.\]
        \end{itemize}
\end{itemize}
The subset $I\subset S$ and elements $s,t\in S$ involved in the generating relations are only required to make the expressions well-defined, and $u_\bullet\in S$ is the rotation sequence for $(I\sblue,\sblue,\teal)$ (see Definition~\ref{def:useq}).
\end{defn}
In other words, $\Dem(W,S)$ has the same presentation as (the lin\rev{e}arization of) $\SC(W,S)$ except it satisfies 
the nil-quadratic relation instead of the $*$-quadratic relation. 
\cite[Theorem 3.21]{KELP1} shows that $\Dem(W,S)$ agrees with the category of Demazure operators (on the symmetric algebra of a reflection faithful and balanced realization). 
In particular, we have
\begin{equation}\label{eq.kSC=SD}
\Hom_{\Dem(W,S)}(I,J) = \langle \partial_\pcoset\rangle_{\pcoset\in W_I\backslash W/W_J} =\kk\Hom_{\SC(W,S)}(I,J),   
\end{equation}
for each morphisms space in $\Dem(W,S)$ where the set $\{\partial_p\ |\ \pcoset\in W_I\backslash W/W_J\}=\Hom_{\SC(W,S)}(I,J)$ is a $\kk$-basis.
Moreover, each basis element $\partial_\pcoset$ has the same reduced expressions as $\pcoset\in W_I\backslash W/W_J$ in $\SC(W,S)$ does.

\section{Core and atoms}\label{s.atom}

This section contains no original content and is based on \cite[Section 2]{KELP3}. 

\begin{defn}\label{def.core} Let $p$ be an $(I,J)$-coset. We call $p$ a \emph{core coset}\footnote{Note that we call a core coset \emph{a coset with full redundancy} in \cite{EKo} but switch to the current terminology in all other papers.}  if $\leftred(p) = I$ and $\rightred(p) = J$, or equivalently if $I = \mi{p} J \mi{p}^{-1}$. \end{defn}

It follows from \eqref{eq.red} and the suceeding discussion that Definition~\ref{def.core} is equivalent to Definition~\ref{introdef.core}.
The following result allows us to reduce problems on double cosets to that on core cosets.

\begin{prop}\cite[Lemma 4.27 and Proposition 4.28]{EKo}\label{prop.lowroad}
    Given an $(I,J)$-coset $p$, the $(\leftred(p),\rightred(p))$-coset $p^{\core}:=W_{\leftred(p)}\mi{p}W_{\rightred(p)}$ is a core coset. Moreover, if $p^{\core}\expr M_\bullet$ is a reduced expression then 
    \[p\expr[[I\supset \leftred(p)]]\circ M_\bullet \circ [[\rightred(p)\subset J]]\]
    is a reduced expression.
\end{prop}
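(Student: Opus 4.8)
The plan is to prove the two assertions in turn: first that $p^{\core}$ is core, then the reducedness of the displayed expression.

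\emph{Coreness.} Since $p^{\core}=W_{\leftred(p)}\mi p\,W_{\rightred(p)}\subseteq p$ contains $\mi p$, which is Bruhat-minimal in all of $p$, the minimal element of $p^{\core}$ is again $\mi p$. The crux is that conjugation by $\mi p$ carries $\rightred(p)$ onto $\leftred(p)$: unwinding \eqref{eq.red}, an element $t\in\rightred(p)$ is exactly a $t\in J$ with $\mi p\,t\,\mi p\inv\in I$, and as $t$ runs over $\rightred(p)$ the conjugate $\mi p\,t\,\mi p\inv$ runs over $I\cap \mi p J\mi p\inv=\leftred(p)$; hence $\mi p\,\rightred(p)\,\mi p\inv=\leftred(p)$. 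Applying \eqref{eq.red} to the $(\leftred(p),\rightred(p))$-coset $p^{\core}$, whose minimal element is $\mi p$, then gives
\[\leftred(p^{\core})=\leftred(p)\cap \mi p\,\rightred(p)\,\mi p\inv=\leftred(p)\cap\leftred(p)=\leftred(p),\]
and symmetrically $\rightred(p^{\core})=\rightred(p)$. Both redundancies are full, so $p^{\core}$ is core.

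\emph{The expression computes $p$.} Write $\alpha=W_IeW_{\leftred(p)}$ and $\beta=W_{\rightred(p)}eW_J$ for the cosets expressed by $[[I\supset\leftred(p)]]$ and $[[\rightred(p)\subset J]]$, so the displayed composite is $\alpha*M_\bullet*\beta$. Using the Demazure products $w_I*x=\max(W_Ix)$ and $x*w_J=\max(xW_J)$, together with $\ma\alpha=w_I$, $\ma\beta=w_J$ and $\ma{p^{\core}}=w_{\leftred(p)}\mi p=\mi p\,w_{\rightred(p)}$, a direct computation gives $\alpha*p^{\core}=W_I\mi p\,W_{\rightred(p)}$ and then $(\alpha*p^{\core})*\beta=W_I\mi p\,W_J=p$. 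So the composite indeed expresses $p$.

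\emph{Reducedness.} The outer pieces are reduced on their own: $[[I\supset\leftred(p)]]$ is a pure down-path (down-steps are automatic in Definition~\ref{def.rex}), and $[[\rightred(p)\subset J]]$ is an up-path along which the minimal element and the left redundancy stay constant, so the up-step criterion of Definition~\ref{def.rex} holds. For the two junctions I would use Definition~\ref{def.p*q} with the core identity $\ma{p^{\core}}=w_{\leftred(p)}\mi p$ (from $\mi p\,w_{\rightred(p)}\,\mi p\inv=w_{\leftred(p)}$) and Howlett's formula $\ma p=(w_Iw_{\leftred(p)})\,\mi p\,w_J$ with additive lengths. The reducedness of $\alpha*p^{\core}$ amounts to $w_I\,(w_{\leftred(p)}\ma{p^{\core}})=w_I\,\mi p$ being reduced, i.e.\ to $\mi p$ having no left descent in $I$, which holds because $\mi p$ is minimal in the $(I,J)$-coset $p$. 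The reducedness of $(\alpha*p^{\core})*\beta$ rearranges, via $\ma{\alpha*p^{\core}}=(w_Iw_{\leftred(p)})\,\mi p\,w_{\rightred(p)}$, to $(w_Iw_{\leftred(p)})\,\mi p\,w_J=\ma p$ being reduced, which is exactly Howlett's theorem. Finally I would concatenate, using that gluing reduced expressions along a reduced composition yields a reduced expression (a property of reduced compositions coming from the symmetric form of Definition~\ref{def.rex}, equivalently from the reduced-composition calculus of \cite[Section 3]{EKo}): applied first to obtain a reduced expression $[[I\supset\leftred(p)]]\circ M_\bullet$ of $\alpha*p^{\core}$, and then to append $[[\rightred(p)\subset J]]$.

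The main obstacle is this last concatenation step. Its genuine content is that enlarging the ambient parabolic on the left (from $\leftred(p)$ to $I$) and on the right (from $\rightred(p)$ to $J$) does not disturb the stepwise reducedness inside $M_\bullet$: the partial cosets of the full expression differ from those of $M_\bullet$ precisely by this enlargement, and one must check that their minimal elements, hence the up-step conditions of Definition~\ref{def.rex}, are unchanged. This is where the coreness of $p^{\core}$ (ensuring full redundancy at the endpoints) and the minimality of $\mi p$ in $p$ do the real work, with the two explicit junction computations serving as the base cases.
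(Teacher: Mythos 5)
The paper never proves this proposition---it is imported wholesale from \cite[Lemma 4.27 and Proposition 4.28]{EKo} (it is the ``low road'' of that paper, cf.\ Remark~\ref{rem.redundsb})---so your proposal must be judged as a from-scratch reconstruction, and as such it is correct. The coreness argument is complete: the conjugation identity $\mi{p}\,\rightred(p)\,\mi{p}\inv=\leftred(p)$ together with $\mi{p^{\core}}=\mi{p}$ gives $\leftred(p^{\core})=\leftred(p)$ and $\rightred(p^{\core})=\rightred(p)$ straight from \eqref{eq.red}. The two junction computations are also right: reducedness of $\alpha*p^{\core}$ comes down to $w_I\mi{p}$ being a reduced product, which holds since $\mi{p}$ has no left descent in $I$, and reducedness of $(\alpha*p^{\core})*\beta$ comes down, via $\ma{\alpha*p^{\core}}=(w_Iw_{\leftred(p)}).\mi{p}.w_{\rightred(p)}$, to the length-additivity $(w_Iw_{\leftred(p)}).\mi{p}.w_J=\ma{p}$ in Howlett's theorem. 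The only external input is the gluing statement---concatenating reduced expressions along a reduced composition of cosets yields a reduced expression---and here your closing paragraph is more pessimistic than necessary: this is exactly the order-independence (associativity) of reduced compositions established in \cite[Section 3]{EKo}, the same fact the present paper uses silently whenever it writes $r=\at.q$ and concatenates reduced expressions (proofs of Proposition~\ref{prop.arexfinns} and of Theorem~\ref{mats}). Citing it closes the proof, and no stepwise re-verification inside $M_\bullet$ is needed. If you did insist on the self-contained check you sketch, note one trap: you would need the partial minimal elements of $M_\bullet$ to lie below $\mi{p}$ in the \emph{right weak} order, since left descent sets are monotone along right weak order but not along the Bruhat order; the Bruhat comparison $\mi{q_j}\leq\mi{p}$, which is what comes cheapest, does not by itself keep the left descents of the partial minima out of $I$.
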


Here is an important observation about reduced expressions of a core coset and of a general coset.
\begin{lem}\cite[Lemma 4.22, 4.25]{EKo}\label{lem.redconst}
Let $p$ be an $(I,J)$-coset with a reduced expression $p\expr [I_0,\cdots, I_m]$. Letting $p_i\expr[I_0,\cdots,I_i]$, the sequence $\leftred(p_i)$ is decreasing. If furthermore $p$ is core, then $I=\leftred(p_i)$ for all $i\leq m$.
\end{lem}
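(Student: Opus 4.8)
The plan is to prove the local statement $\leftred(p_{i+1})\subseteq\leftred(p_i)$ for each single step $p_i\to p_{i+1}$ (recall that consecutive $I_i,I_{i+1}$ differ by one simple reflection, since each $W_{I_i}eW_{I_{i+1}}$ is a generator), from which the monotonicity of $i\mapsto\leftred(p_i)$ is immediate. Granting this, the second assertion follows painlessly: since $p_0\expr[I_0]$ has $\mi{p_0}=e$, we get $\leftred(p_0)=I_0\cap eW_{I_0}e\inv$, so $\leftred(p_0)=I$; and if $p=p_m$ is core then $\leftred(p_m)=I$ by Definition~\ref{def.core}. The chain $I=\leftred(p_0)\supseteq\leftred(p_1)\supseteq\cdots\supseteq\leftred(p_m)=I$ is then forced to be constant, so $\leftred(p_i)=I$ for all $i$.

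For a step enlarging the right parabolic ($I_i\subset I_{i+1}$) there is nothing to do: by the reducedness criterion of Definition~\ref{def.rex}, reducedness of $I_\bullet$ at this step already carries the equality $\leftred(p_i)=\leftred(p_{i+1})$. The content is therefore entirely in a step shrinking the right parabolic, $I_{i+1}=I_i\setminus\{s\}$, which is automatically reduced. Here the composition formula gives $p_{i+1}=W_{I_0}\ma{p_i}W_{I_{i+1}}$, so in particular $p_{i+1}\subseteq p_i$ and $\mi{p_{i+1}}\in p_i$. The key point, discussed below, is that one can write $\mi{p_{i+1}}=\mi{p_i}.c$ with $c\in W_{I_i}$. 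Granting this, since $W_{I_{i+1}}\subseteq W_{I_i}$ and $c\in W_{I_i}$ we have $cW_{I_{i+1}}c\inv\subseteq W_{I_i}$, whence
\[
\mi{p_{i+1}}W_{I_{i+1}}\mi{p_{i+1}}\inv=\mi{p_i}\,cW_{I_{i+1}}c\inv\,\mi{p_i}\inv\subseteq \mi{p_i}W_{I_i}\mi{p_i}\inv .
\]
Intersecting with $W_{I_0}$ yields $W_{\leftred(p_{i+1})}\subseteq W_{\leftred(p_i)}$, that is, $\leftred(p_{i+1})\subseteq\leftred(p_i)$, as wanted.

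The main obstacle is the decomposition $\mi{p_{i+1}}=\mi{p_i}.c$ with $c\in W_{I_i}$. I would obtain it from Howlett's theorem as recalled in \cite[Lemma 2.12]{EKo}: the bijection \eqref{eq.Howl} writes $\mi{p_{i+1}}=x\mi{p_i}y$ with $x\in W_{I_0}$ and $y\in W_{I_i}$ minimal in $W_{\rightred(p_i)}y$, and the accompanying reduced-product statement gives $\mi{p_{i+1}}=x.\mi{p_i}.y$. Since $\mi{p_{i+1}}$ is the minimal element of the $(I_0,I_{i+1})$-coset $p_{i+1}$, it is in particular left $W_{I_0}$-minimal, i.e.\ $\leftdes(\mi{p_{i+1}})\cap I_0=\emptyset$; as $x\in W_{I_0}$ and the product is reduced, a left descent of $x$ would produce a left descent of $\mi{p_{i+1}}$ in $I_0$, forcing $x=e$. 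Thus $\mi{p_{i+1}}=\mi{p_i}.c$ with $c=y\in W_{I_i}$. This verification that the left $W_{I_0}$-factor is trivial is the only delicate point; everything else is a short manipulation of parabolic subgroups.
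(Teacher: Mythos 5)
Your proof is correct. One caveat on the comparison: the paper does not actually prove Lemma~\ref{lem.redconst} --- it imports it from \cite[Lemmas 4.22, 4.25]{EKo} --- so there is no in-paper argument to match against; but your reconstruction is sound and uses exactly the tools the paper's preliminaries make available. The up-steps are correctly dispatched by the reducedness criterion of Definition~\ref{def.rex} (equality of minimal elements and of redundancies), and the down-steps by the key decomposition $\mi{p_{i+1}}=\mi{p_i}.c$ with $c\in W_{I_i}$, which you rightly extract from Howlett's theorem \eqref{eq.Howl} together with the observation that Bruhat-minimality of $\mi{p_{i+1}}$ in $p_{i+1}$ forbids any left descent in $I_0$, killing the $W_{I_0}$-factor; the conjugation step $cW_{I_{i+1}}c\inv\subseteq W_{I_i}$ and intersection with $W_{I_0}$ (via Kilmoyer, so that containment of the standard parabolics $W_{\leftred(p_{i+1})}\subseteq W_{\leftred(p_i)}$ gives containment of the subsets themselves) then finishes the monotonicity, and the squeeze $I=\leftred(p_0)\supseteq\cdots\supseteq\leftred(p_m)=I$ gives the core case.
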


\cite{KELP3} proves a number of additional facts on core cosets. In particular, composition of core cosets can be understood as follows.

\begin{lem}\label{corecore}
    Let $p$ be a $(I,J)$-coset and $q$ be a $(J,K)$-coset and suppose  $r=p*q$ is reduced.
    If $p$ and $q$ are core cosets, then $r$ is a core coset.
     Conversely, if $r$ is a core coset and $|J|=|I|=|K|$, then $p$ and $q$ are core cosets.
\end{lem}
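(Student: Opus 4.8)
The plan is to prove the two implications separately, in both cases translating the core condition into the set identity $W_I\mi{r}=\mi{r}W_K$ of Definition~\ref{introdef.core} and exploiting that, for a core coset, the minimal element conjugates one parabolic to the other. Throughout I use the following core identities: if $p=W_I\mi{p}W_J$ is core, then $W_I\mi{p}=\mi{p}W_J$ and $\ell(x\mi{p})=\ell(x)+\ell(\mi{p})$, $\ell(\mi{p}y)=\ell(\mi{p})+\ell(y)$ for all $x\in W_I$, $y\in W_J$ (these follow from $\leftred(p)=I$, $\rightred(p)=J$ together with the Howlett remark in Section~\ref{ss.prelim}), and consequently $\ma{p}=\mi{p}w_J=w_I\mi{p}$ with all products reduced.

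For the forward implication I would identify the double coset $r=p*q=W_I(\ma p*\ma q)W_K$ explicitly. Since $p$ is core, $\ma p=\mi p\,w_J$ is reduced, so $J\subseteq\rightdes(\ma p)$ and hence $\ma p*z=\ma p$ for every $z\in W_J$; since $q$ is core, $\ma q=w_J\mi q$ is reduced. The definition of reduced composition (Definition~\ref{def.p*q}) says $\ma p\,.\,(w_J^{-1}\ma q)$ is reduced, and $w_J^{-1}\ma q=\mi q$, so $\ma p\,.\,\mi q$ is reduced. Associativity of the Demazure product then gives
\[\ma p*\ma q=\ma p*(w_J*\mi q)=(\ma p*w_J)*\mi q=\ma p*\mi q=\ma p\,\mi q=\mi p\,w_J\,\mi q.\]
This element lies in $\mi p\,W_J\,\mi q$, and the core identities $\mi p\,W_J=W_I\mi p$ and $W_J\mi q=\mi q\,W_K$ yield
\[\mi p\,W_J\,\mi q=W_I\mi p\mi q=\mi p\mi q\,W_K.\]
Thus $\ma p*\ma q$ and $\mi p\mi q$ lie in the same double coset, so $r=W_I\mi p\mi q W_K=W_I\mi p\mi q=\mi p\mi q\,W_K$; by Definition~\ref{introdef.core} (with representative $\mi p\mi q$), $r$ is core.

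For the converse I would first extract the two ``easy'' halves from redundancy monotonicity. Concatenating reduced expressions of $p$ and $q$ yields a reduced expression of $r$ that passes through $p$ as an initial segment and through $q$ as a final segment. Since $r$ is core, Lemma~\ref{lem.redconst} gives $\leftred(r_i)=I$ all along the expression, so in particular $\leftred(p)=I$; the left--right mirror of Lemma~\ref{lem.redconst} applied to the tail gives $\rightred(q)=K$. It remains to promote these one-sided statements to the full core condition, and this is exactly where $|I|=|J|=|K|$ enters. From $\leftred(p)=I$ we get $W_I=W_I\cap\mi p\,W_J\,\mi p^{-1}$, i.e. $\mi p^{-1}W_I\mi p\subseteq W_J$, whence $W_{\rightred(p)}=\mi p^{-1}W_I\mi p\cap W_J=\mi p^{-1}W_I\mi p$. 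Conjugation by $\mi p$ is an isomorphism of Coxeter systems, so $\mi p^{-1}W_I\mi p$ is a finite Coxeter group of rank $|I|$, while as the standard parabolic $W_{\rightred(p)}$ it has rank $|\rightred(p)|$; since the rank of a finite Coxeter group is an invariant, $|\rightred(p)|=|I|=|J|$, and with $\rightred(p)\subseteq J$ this forces $\rightred(p)=J$. Hence $p$ is core, and the symmetric argument (using $|J|=|K|$) shows $q$ is core.

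The main obstacle is the converse, and specifically the rank-counting step: the one-sided redundancies $\leftred(p)=I$ and $\rightred(q)=K$ come essentially for free from Lemma~\ref{lem.redconst}, but closing the gap to the two-sided core condition genuinely needs the equal-rank hypothesis together with the invariance of Coxeter rank under the conjugation $W_I\cong\mi p^{-1}W_I\mi p=W_{\rightred(p)}$; without $|I|=|J|=|K|$ the coset $p$ can fail to be core even when $r=p*q$ is. In the forward direction the only real work is the Demazure-product identification $\ma p*\ma q=\mi p\,w_J\,\mi q$, after which the conclusion is pure manipulation of the two core coset identities.
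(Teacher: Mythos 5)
Your forward implication is correct, and it takes a genuinely more direct route than the paper's: you identify $\ma{p}*\ma{q}=\mi{p}w_J\mi{q}$ by Demazure-product manipulations and then verify the set identity $W_I\mi{p}\mi{q}=W_I\mi{p}\mi{q}W_K=\mi{p}\mi{q}W_K$ of Definition~\ref{introdef.core} directly from the two core identities $W_I\mi{p}=\mi{p}W_J$ and $W_J\mi{q}=\mi{q}W_K$, whereas the paper first pins down $\mi{p*q}=\mi{p}.\mi{q}$ through an exchange-property/descent argument and then invokes Howlett's theorem; your version avoids the exchange property altogether. The opening of your converse (concatenating reduced expressions and applying Lemma~\ref{lem.redconst} and its mirror to get $\leftred(p)=I$ and $\rightred(q)=K$) also matches the paper.

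The gap is in your promotion step: the assertion that ``the rank of a finite Coxeter group is an invariant'' is false. The dihedral group of order $12$ is isomorphic as an abstract group to $\Z/2\Z\times S_3$, i.e.\ $W(I_2(6))\cong W(A_1\times A_2)$, so a single finite group can carry Coxeter presentations of rank $2$ and of rank $3$; a bare group isomorphism --- which is all that conjugation gives you once you have passed from the sets $I$, $\rightred(p)$ to the parabolic subgroups $W_I$, $W_{\rightred(p)}$ --- cannot force $|\rightred(p)|=|I|$. The fact you need is true, but for a different reason, and the cleanest fix is not to pass to parabolic subgroups at all: the redundancies are defined in \eqref{eq.red} as sets of simple reflections, so $\leftred(p)=I$ says literally that every $s\in I$ equals $\mi{p}j\mi{p}\inv$ for some $j\in J$; hence $\mi{p}\inv I\mi{p}\subset J$ elementwise, and $\rightred(p)=\mi{p}\inv I\mi{p}\cap J=\mi{p}\inv I\mi{p}$ has exactly $|I|$ elements because conjugation by the fixed element $\mi{p}$ is injective. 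Then $|I|=|J|$ forces $\rightred(p)=J$, as you intended. (Equivalently one may cite Kilmoyer/Howlett: conjugation by the minimal representative $\mi{p}$ carries $\leftred(p)$ onto $\rightred(p)$ as sets of simple reflections; this is also the unstated justification behind the paper's own one-line assertion $|\rightred(p)|=|I|=|J|$.)
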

\begin{proof}
See \cite[Lemma 2.10]{KELP3} for the first claim and \cite[Proposition 2.14]{KELP3} for the converse.
\end{proof}

Now we introduce the atoms of this paper.

\begin{defn}\cite[Definition 2.18]{KELP3}\label{def.atom}
    A coset is called an \emph{atomic coset} (or an \emph{atom}) if it is a core coset and has a reduced expression of the form $[I+s-t]$.
\end{defn}

We emphasize the core condition in Definition~\ref{def.atom}. 

\begin{rem}\label{rem.I,s=atoms}
A coset of the form $\pcoset\expr[I+s-t]$ is core exactly when $t = w_{Is} s w_{Is}$. 
Thus the atoms in $(W,S)$ are indexed by the pairs $(I,s)$, with $sI\subsetneq S$ finitary and $s\in S\setminus I$.   
\end{rem}

\begin{rem}\label{rem.inverseatom}
    If $\at\expr[I,Is,J]=[I+s-t]$ is an atom, then its \rev{\emph{reverse}} $\at\inv\expr [J,Is,I]=[J+t-s]$, where $t=w_{Is}s w_{Is}$, is an atom. (See \cite[Section 4.4]{EKo} for a general discussion on \rev{reversing} double cosets.)
\end{rem}

Another way to characterize an atom among the cosets of the form $\pcoset\expr[I+s-t]$ is that an atom does not allow for braid relations:


\begin{prop}\cite[Proposition 5.14]{EKo}\label{prop.atom}
An atomic coset has a unique reduced expression. More precisely, a coset of the form $\pcoset\expr[I+s-t]$ is atomic if and only if $\pcoset$ has a unique \rev{reduced} expression; when $\pcoset$ is not atomic the switchback relation applies.
\end{prop}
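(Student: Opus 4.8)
The plan is to combine the singular Matsumoto theorem with the classification of braid relations and with the rigidity of left redundancy along reduced expressions of core cosets (Lemma~\ref{lem.redconst}). Throughout write $J=Is\setminus t$, so that $p$ is an $(I,J)$-coset, and recall that $p$ is core (equivalently atomic) exactly when $t=w_{Is}sw_{Is}$. Since reduced expressions of a fixed coset are connected by braid relations, it suffices to decide, starting from the width-two expression $[I+s-t]$, whether any braid relation can be applied to reach a different reduced expression.

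First I would record which braid relations can touch $[I+s-t]$ at all. A braid relation acts on a contiguous subexpression, and the up-up, down-down and switchback relations all have a left-hand side of width two; the only width-two window in $[I+s-t]$ is the whole expression, whose step pattern is $(+,-)$. The up-up relation needs the pattern $(+,+)$ and the down-down relation needs $(-,-)$, so neither applies; the switchback relation is the unique candidate, and it can only be read in the forward (short-to-long) direction, since the long side has pattern beginning with $-$ and $[I+s-t]$ is not of that form. Hence $p$ fails to have a unique reduced expression if and only if the switchback relation applies to $[I+s-t]$ and yields a reduced expression, and in that case the switchback relation is necessarily the first move toward any other reduced expression.

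Next I would settle the core case, which I expect to be the crux. Suppose $p$ is core and apply the switchback relation to $[I+s-t]$; the resulting expression begins $[I-u_1+u_0\cdots]$ with $u_1\in I$, so its length-one truncation is $p_1\expr[I,I\setminus u_1]=W_IeW_{I\setminus u_1}$, whose minimal element is $\mi{p_1}=e$ and whose left redundancy is therefore $\leftred(p_1)=I\cap(I\setminus u_1)=I\setminus u_1\subsetneq I$. But by Lemma~\ref{lem.redconst} every reduced expression of the core coset $p$ has left redundancy equal to $I$ at every stage, so the switchback output is not reduced. Combined with the previous paragraph, no braid move carries $[I+s-t]$ to a different reduced expression, and Matsumoto then forces $[I+s-t]$ to be the unique reduced expression of $p$. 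This also proves the opening assertion, since an atomic coset by definition possesses a reduced expression of the form $[I+s-t]$.

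Finally I would treat the non-core case and exhibit the switchback genuinely applying. If $p$ is not core then $\leftred(p)\subsetneq I$ or $\rightred(p)\subsetneq J$. In the first situation Proposition~\ref{prop.lowroad} supplies a reduced expression $[[I\supset\leftred(p)]]\circ M_\bullet\circ[[\rightred(p)\subset J]]$ beginning with a genuine down-step, hence differing from $[I+s-t]$, which begins with an up-step; the second situation is symmetric, yielding a reduced expression that ends with a genuine up-step rather than the down-step $-t$ of $[I+s-t]$ (alternatively, reverse the expression and reuse the first situation). Either way $p$ has at least two reduced expressions, and by the second paragraph the braid path from $[I+s-t]$ to the second one must open with the switchback relation, so the switchback applies. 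The main obstacle is precisely the core case, and its resolution is the observation that the switchback relation spends its opening step decreasing the left parabolic, a move Lemma~\ref{lem.redconst} forbids for a core coset; once this asymmetry is noticed, the remainder is bookkeeping with Matsumoto and Proposition~\ref{prop.lowroad}.
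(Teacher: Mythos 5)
Your argument is correct, but note that there is no proof in the paper to compare it with: Proposition~\ref{prop.atom} is imported wholesale from \cite[Proposition 5.14]{EKo}, with no argument given. What you have produced is a derivation of that imported statement from the paper's \emph{other} imported ingredients --- the singular Matsumoto theorem \cite[Theorem 5.30]{EKo}, Lemma~\ref{lem.redconst}, Proposition~\ref{prop.lowroad}, and the sign patterns of the generating relations --- and it holds up: the width/sign analysis correctly isolates the forward switchback as the only braid move available at $[I+s-t]$; your key observation, that a switchback output opens with a down-step and therefore by Lemma~\ref{lem.redconst} cannot be a reduced expression of a core coset, is precisely the mechanism the paper itself uses later in the proof of Lemma~\ref{lem.claim}; and the low road of Proposition~\ref{prop.lowroad} does furnish a visibly distinct reduced expression when $p$ is not core. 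Two remarks. First, in the core case you need not argue hypothetically that the switchback output would fail to be reduced: the switchback relation for $(Is,s,t)$ simply does not exist there, since the rotation sequence of Definition~\ref{def:useq} is only defined when $t\neq w_{Is}sw_{Is}$, which is exactly the non-atomic condition; your contradiction argument reaches the same conclusion but by a longer route. Second, and more importantly, your derivation inverts the logical order of the source: in \cite{EKo}, Proposition 5.14 (which also underlies Proposition~\ref{prop.sb} here) establishes the switchback relations and is upstream of the Matsumoto theorem, so your argument is a consistency check inside this paper's black-boxed toolkit rather than a proof that could replace the original one; within the present paper, where both statements are taken as given, it is nonetheless a legitimate and self-contained justification.
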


\rev{We are interested in the composition of atoms, say 
\begin{equation}\label{eq.a1a2am}
    \at_1*\at_2*\cdots *\at_m.
\end{equation}
Replacing each $\at_i$ with its unique reduced expression given in Proposition~\ref{prop.atom}, we view the composition~\eqref{eq.a1a2am} as an expression in $\SC$. Expressions in $\SC$ arising in this way are characterized as follows.}

\begin{defn}
An expression of the form
\begin{equation}\label{eq.atomicex}
[L_0,I_0,L_1,I_1,\cdots I_{m-1},L_m]=[L_0+s_1-t_1+\cdots +s_m-t_m]    
\end{equation}
 is called an \emph{atomic expression} if, for each $0\leq i<m$, the coset 
 \begin{equation}\label{eq.at=}
 \at_i\expr [L_i,I_i,L_{i+1}]=[L_i+s_{i+1}-t_{i+1}]    
 \end{equation}
  is core (thus an atom).
\end{defn}

\rev{A}n atomic expression is a composition of atoms in the same way a regular expression in a Coxeter group is a product of simple reflections.
In this sense, the following result from \cite[Section 2.5]{KELP3} says that every core coset decomposes into atoms. We slightly reformulate the result and include a proof for convenience. 

\begin{prop}\label{prop.arexfinns}
    Let $\pcoset$ be a core $(I,J)$-coset. Then $p$ has an atomic reduced expression. In fact,
    \begin{enumerate}
    \item For each $\sblue\in \leftdes(\ma{\pcoset})\setminus I$, there is a reduced atomic expression of the form
    \[\pcoset\expr [I+\sblue-t+\cdots].\] 
    \item For each $\sblue\in \rightdes(\ma{\pcoset})\setminus J$, there is a reduced atomic expression of the form 
    \[\pcoset\expr [I+\cdots + u-\sblue].\]     
    \end{enumerate} 
\end{prop}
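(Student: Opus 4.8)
Since inverting a coset exchanges left and right (so that $\ma{p^{-1}}=\ma{p}^{-1}$, $\leftdes(\ma{p^{-1}})=\rightdes(\ma{p})$, and the inverse of an atom $W_Iw_KW_L$ is the atom $W_Lw_KW_I$), reversing a reduced atomic expression turns statement (2) for $p$ into statement (1) for the core $(J,I)$-coset $p^{-1}$. Thus the plan is to prove only (1), by induction on $\ell(\ma p)$. Fix $s\in\leftdes(\ma p)\setminus I$ and set $K=I\sqcup\{s\}$. Since $p$ is core we have $\ma p=w_I.\mi p.w_J$, so $I\subseteq\leftdes(\ma p)$ and hence $K\subseteq\leftdes(\ma p)$; in particular $W_K$ is finite and $w_K$ is a reduced left divisor, $\ma p=w_K.y$ with $\leftdes(y)\cap K=\emptyset$. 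Put $t=w_Ksw_K\in K$ and $L=K\setminus\{t\}$, so that $\at\expr[I+s-t]=W_Iw_KW_L$ is the atom attached to $(I,s)$, with $\ma{\at}=w_K$ and $|L|=|I|$. The candidate factorization is $p=\at*q$, where $q$ is the $(L,J)$-coset I want to realise with $\ma q=w_L.y$; note $\leftdes(y)\cap L=\emptyset$ makes $w_L.y$ reduced, and $\ell(\ma q)=\ell(w_L)+\ell(y)<\ell(w_K)+\ell(y)=\ell(\ma p)$, which will drive the induction.

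The heart of the argument is to show that this factorization is legitimate: that $w_L.y$ really is the maximal element of an $(L,J)$-coset $q$, and that $p=\at*q$ is reduced. Reducedness is then formal: by Definition~\ref{def.p*q} it amounts to $(\ma{\at}w_L).\ma q=(w_Kw_L).(w_L.y)=w_K.y=\ma p$ being reduced, which holds since $\ma p=w_K.y$ is reduced and $\ell(w_Kw_L)=\ell(w_K)-\ell(w_L)$; and then $\ma{\at}*\ma q=w_K*(w_L.y)=w_K.y=\ma p$ gives $\at*q=p$. Granting this, $p=\at*q$ is a reduced composition of cosets with $|I|=|L|=|J|$ (the outer equality because $p$ is core), so the converse half of Lemma~\ref{corecore} forces $q$ to be core.

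I expect the genuine obstacle to be the one purely combinatorial point left open above, namely that $J\subseteq\rightdes(w_L.y)$ (equivalently, that $w_L.y$ is a coset maximum). This does \emph{not} follow from length bookkeeping alone: one can have $j\in\rightdes(\ma p)$ with $(w_L.y)j>w_L.y$, because $j$ could delete a letter of the $w_K$-part rather than of $y$, and ruling this out requires the core hypothesis $I=\mi pJ\mi p^{-1}$. I would therefore prove the containment by a sign computation in the root system, exploiting that $w_I(\alpha_r)>0$ for every $r\notin I$ and tracking signs of $\ma p^{-1}(\alpha_j)=w_J\mi p^{-1}w_I(\alpha_j)$ through the core factorization $\ma p=w_I\mi pw_J$.

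Once the factorization is in hand the induction closes quickly. If $\mi q=e$ then $L=J$ and $p=\at$, so $[I+s-t]$ is the required expression. Otherwise $q$ is a nontrivial core $(L,J)$-coset with strictly smaller $\ell(\ma{\cdot})$, and I apply the inductive form of (1) to $q$ after choosing any $s'\in\leftdes(\ma q)\setminus L$, obtaining a reduced atomic $q\expr M_\bullet$; composing, $p\expr[I+s-t]\circ M_\bullet=[I+s-\cdots]$ has the desired shape. The existence of such an $s'$ for a nontrivial core coset is a small companion lemma I would dispatch separately: by Definition~\ref{def.rex} a reduced (double-coset) expression of $q$ cannot begin with a down-move while its minimal element is still $e$ (this would violate the constancy of $\leftred$ forced by Lemma~\ref{lem.redconst}), so it begins with an up-move $L\to L\sqcup\{s'\}$, and the left descent $s'$ introduced there persists to $\ma q$ precisely because $\leftred$ stays equal to $L$ along the reduced expression.
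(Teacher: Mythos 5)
Your route is genuinely different from the paper's, and most of it is correct, but as written it has a gap at exactly the step you yourself call the heart of the argument. The paper never has to verify by hand that any element is a coset maximum: it invokes the high road \cite[Proposition 4.21]{EKo} to get a reduced expression $p\expr[I,Is,\ldots,J]$, peels off the first step, applies the low road (Proposition~\ref{prop.lowroad}) to the remaining $(Is,J)$-coset $n$ to extract $[Is,Is\setminus t]$ for $t\in Is\setminus\leftred(n)$, and then the converse half of Lemma~\ref{corecore} forces \emph{both} factors to be core, so $[I+s-t]$ is automatically an atom (a posteriori $t=w_{Is}sw_{Is}$, i.e.\ your $t$). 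Your more self-contained construction must instead prove that $w_L.y$ is the maximal element of an $(L,J)$-coset, i.e.\ $J\subseteq\rightdes(w_L.y)$, and there the proposal stops: you say you ``would'' prove it by a sign computation, but you never perform it. Since this is precisely where the core hypothesis enters (as you correctly observe), leaving it as a plan rather than a proof is a genuine gap, not a routine omission.

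The good news is that your sketched strategy does close the gap, so the approach is salvageable; here is the computation. For $j\in J$, coreness gives $\mi{p}j\mi{p}\inv=i\in I$, and minimality of $\mi{p}$ (no right descents in $J$) gives $\mi{p}(\alpha_j)=+\alpha_i$. From $\ma{p}=w_I.\mi{p}=w_K.y$ we get $\mi{p}=(w_Iw_K).y$, hence $y(\alpha_j)=w_Kw_I(\alpha_i)$; since $w_I(\alpha_i)=-\alpha_{w_Iiw_I}$ and $w_K(\alpha_{k'})=-\alpha_{w_Kk'w_K}$ for $k'\in K$, this equals $\alpha_k$ with $k=w_K(w_Iiw_I)w_K\in w_KIw_K$. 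Because $s\notin I$, we have $t=w_Ksw_K\notin w_KIw_K$, so $k\in L=K\setminus t$, whence $w_Ly(\alpha_j)=w_L(\alpha_k)<0$, i.e.\ $J\subseteq\rightdes(w_L.y)$. Together with $L\subseteq\leftdes(w_L.y)$ and the standard identity $\ma{W_LzW_J}=w_L*z*w_J$ (an element fixed by $w_L*-$ and $-*w_J$ is its coset's maximum), this legitimizes your factorization $p=\at*q$, and the rest of your argument goes through. Two smaller corrections: your formula ``$\ma{p}=w_I.\mi{p}.w_J$'' is false for core cosets (it double-counts; the correct statements are $\ma{p}=w_I.\mi{p}=\mi{p}.w_J$), though the conclusion you draw, $I\subseteq\leftdes(\ma{p})$, holds for any coset simply because $\ma{p}$ is Bruhat-maximal in $p$. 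And your companion lemma is just Remark~\ref{rem.[I]}: for a core coset $q$ one has $\leftdes(\mi{q})\subseteq\leftdes(\ma{q})$ since $\ma{q}=\mi{q}.w_J$, so $\leftdes(\ma{q})=L$ forces $\leftdes(\mi{q})=\emptyset$, i.e.\ $\mi{q}=e$ and $q$ trivial --- no appeal to Lemma~\ref{lem.redconst} and reduced expressions is needed.
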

\begin{rem}\label{rem.[I]}
    Note that
$\leftdes(\ma{\pcoset})=I$ implies $\ma{\pcoset}=w_I\in W_I$ and thus $\pcoset$ is a trivial $(I,I)$-coset with the empty atomic expression $[I]$. 
\end{rem}
\begin{proof}
We prove the first claim by induction on $\ma{p}$. The second claim has a similar proof. 
The base case is given by Remark~\ref{rem.[I]}, which also allows us to assume that $I \subsetneq \leftdes(r)$. 

Let $s \in \leftdes(r) \setminus I$. 
\cite[Proposition 4.21]{EKo} gives a reduced expression $r\expr [I,Is, \ldots, J]$. Let $n$ be the $(Is,J)$-coset such that $r = [I,Is] . n$.
The left redundancy of $n$ is at most the size of $J$, so it is a proper subset of $Is$. By Proposition~\ref{prop.lowroad}, for any $t \in Is \setminus \leftred(n)$, the coset $n$ has a reduced expression of the form $[Is, Is \setminus t, \ldots J]$. Let $q$ be the $(Is \setminus t,J)$-coset such that $n = [Is,Is\setminus t] . q$.
Then
$ r = \at . q$ for $\at \expr[I,Is, Is \setminus t]$. Then $\at$ is an atom and $q$ is a core coset by Lemma~\ref{corecore}. Since $\ma{q}<\ma{p}$ (see Definition~\ref{def.p*q}), the induction step is established.
\end{proof}

\part{Atomic relations}
\section{Switchback relations versus atomic braid relations}\label{s.switchbacks}

In this section we fix a finite Coxeter system $(W,S)$ and employ Notation~\ref{notation.prime}. 
We recommend the reader to have in mind that the discussion in this section is to be applied in the setting where we have a not-necessarily-finite Coxeter system $(\widetilde W,\widetilde S)$ and $S\subset \widetilde S$ is a finitary subset generating the finite Coxeter subgroup $W$ in $\widetilde W$. 

\begin{notation}\label{notation.prime}
For $s\in S$, we write $\con{s} = w_S\inv s w_S=w_S s w_S$. For $L\subset S$, we write $\Lcon = \{\con{s}\ |\ s\in L\}$. 
\end{notation}

\rev{Note that $\con{s}\in S$ and that the bijection $s\mapsto \con{s}$ on $S$ is the restriction of the automorphism $w\mapsto w_Sww_S$ on $(W,S)$. 
}

Our convention throughout the section is that  $\sblue,\teal\in S$ are two elements with the condition $\sblue\neq\tcon$, where $\sblue=\teal$ is possible. 

\begin{defn}\cite[Definition 5.10]{EKo}\label{def:useq}
Given $\sblue,\teal\in S$ with $\sblue\neq\tcon$, the \emph{rotation sequence} associated to $(S,\sblue,\teal)$
is defined as follows. 
Let
\begin{equation}\label{eq.u0u-1}
\textcolor{RoyalBlue}{u_0 = s}
,\quad \textcolor{Magenta}{\textcolor{Magenta}{u_{-1}} =\tcon}  ,  \quad I_0 =S\setminus u_0,\quad I_{-1} =S\setminus u_{-1} 
\end{equation}
and define the elements $u_i \in S$ and subsets $I_i \subset S$, for $i\in \mathbb Z$, by 
\begin{equation} \label{urecurse} u_{i+1} = w_{I_i} u_{i-1} w_{I_i}, \quad I_i = S \setminus u_i. \end{equation}
\end{defn}
See \cite[Section 5.4]{EKo} for further discussions on the rotation sequence and examples (some other examples appear below in Examples~\ref{ex.ab1}-\ref{ex.ab4}), but beware that \cite{EKo} uses the index $(S\setminus\sblue,\sblue,\teal)$ for the rotation sequence we associate to $(S,\sblue,\teal)$. Here we give an alternative characterization of the rotation sequences.

\begin{prop}\label{prop.atominrotation}
Let $u_\bullet$ be the rotation sequence for $(S,\sblue,\teal)$ and let 
\[I_i=S\setminus u_i,\quad L_i=I_i\setminus u_{i-1}.\] 
Then
\begin{equation}\label{eq.at_i}
\at_i=[L_i,I_i,L_{i+1}] = [L_i +u_{i-1}-u_{i+1}]    
\end{equation}
is an atomic coset for each $i\in\mathbb Z$. Conversely, if $u_i\in S$ and $I_i$ are as above, with the cases $i=0,-1$ given by \eqref{eq.u0u-1}, such that \eqref{eq.at_i} is atomic, then $u_\bullet$ is the rotation sequence for $(S,\sblue,\teal)$.
\end{prop}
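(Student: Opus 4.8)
The plan is to reduce the entire statement to the elementary characterization of atoms recorded just after the definition of atomic coset: a coset $\pcoset\expr[I+s-t]$ is core (hence, being of this form, an atom) exactly when $t=w_{Is}sw_{Is}$. Once the $\pm$-notation of \eqref{eq.at_i} is unwound, both directions reduce to matching this condition against the recursion \eqref{urecurse}.

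First I would translate \eqref{pmnot} into subsets. Taking $I=L_i$ and $s=u_{i-1}$, the intermediate set $L_i\sqcup u_{i-1}$ equals $I_i$ because $L_i=I_i\setminus u_{i-1}$, provided $u_{i-1}\in I_i$; and the final step deletes $u_{i+1}$, giving $I_i\setminus u_{i+1}$, which equals $L_{i+1}=I_{i+1}\setminus u_i=S\setminus\{u_{i+1},u_i\}$ provided $u_{i+1}\in I_i$. Hence $[L_i+u_{i-1}-u_{i+1}]=[L_i,I_i,L_{i+1}]$ is a well-formed expression whose target coset is $W_{L_i}w_{I_i}W_{L_{i+1}}$, a coset of the shape $W_{K\setminus s}w_K W_{K\setminus t}$ with $K=I_i$, $s=u_{i-1}$, $t=u_{i+1}$. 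With this dictionary the characterization reads: $\at_i$ is an atom if and only if $u_{i+1}=w_{I_i}u_{i-1}w_{I_i}$, which is precisely \eqref{urecurse}.

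The two directions are then immediate. For the forward implication, the defining recursion of the rotation sequence is literally this core condition, so each $\at_i$ is a core coset of the form $W_{L_i}w_{I_i}W_{L_{i+1}}$, hence an atom. For the converse, atomicity of every $\at_i$ forces $u_{i+1}=w_{I_i}u_{i-1}w_{I_i}$ for all $i$; combined with the prescribed initial data $u_0=\sblue$ and $u_{-1}=\tcon$ from \eqref{eq.u0u-1}, this second-order recursion has a unique solution, which is by definition the rotation sequence for $(S,\sblue,\teal)$.

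The one step needing genuine care is the well-definedness isolated above, namely that $u_{i-1}\in I_i$ (equivalently $u_{i-1}\neq u_i$) for every $i$; without it the conjugate $w_{I_i}u_{i-1}w_{I_i}$ need not lie in $S$ at all and the subset sequence $[L_i,I_i,L_{i+1}]$ fails to parse. I would establish this by induction outward from the base indices with invariant ``$u_{i-1}\neq u_i$'', the base case $u_{-1}\neq u_0$ being exactly the standing hypothesis $\sblue\neq\tcon$. For the step I use that $w_{I_i}$ is an involution normalizing $W_{I_i}$ and therefore permutes the simple system $I_i$ under conjugation; thus $u_{i-1}\in I_i$ yields $u_{i+1}=w_{I_i}u_{i-1}w_{I_i}\in I_i=S\setminus u_i$, so $u_{i+1}\neq u_i$, which is the invariant one index higher, and the reversibility of the recursion propagates it downward as well. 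This inductive invariant is the only substantive ingredient; the rest is the formal identification of the core condition with \eqref{urecurse}.
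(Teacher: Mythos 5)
Your proof is correct and takes the same route as the paper, whose entire proof is the one-line assertion that both claims follow directly from the definitions: you simply unwind the $\pm$-notation, identify the core condition $t=w_{Is}sw_{Is}$ with the recursion \eqref{urecurse}, and note the uniqueness of the solution to that recursion given the initial data \eqref{eq.u0u-1}. Your inductive verification that $u_{i-1}\neq u_i$ throughout (so that the expressions parse and $w_{I_i}u_{i-1}w_{I_i}$ stays in $I_i$) is a well-definedness detail the paper leaves implicit, not a departure in method.
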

\begin{proof}
    Both claims are direct consequences of the definitions. 
\end{proof}

It follows from Proposition~\ref{prop.atominrotation} that \begin{equation}\label{eq.tok}
[L_0,I_0,L_1,I_1,L_2,I_2,\cdots,L_k,I_k,L_{k+1}] = [L_0+ u_{-1}- \textcolor{black}{u_1}+\textcolor{RoyalBlue}{u_0} -\textcolor{black}{u_2} +\textcolor{black}{u_1} 
\ \cdots\ 
+u_{k-2} -{u_k}+u_{k-1} - u_{k+1}]     
\end{equation}
is an atomic expression for each $k\geq 0$. The expression \eqref{eq.tok} is reduced if $k$ is small enough, e.g., if $k=0$. This leads to the following definition.

\begin{defn}\label{def.d}
Let $d(S,\sblue,\teal)$ be the maximal $k\geq 0$ such that the atomic expression 
\begin{equation}\label{eq.tok2}
[K+ u_{-1}- \textcolor{black}{u_1}+\textcolor{RoyalBlue}{u_0} -\textcolor{black}{u_2} +\textcolor{black}{u_1} 
\ \cdots\ 
+u_{k-2} -{u_k}+u_{k-1} - u_{k+1}] ,    
\end{equation}
where $K=L_0 =S\setminus\{\sblue,\tcon\}$, 
is reduced.
\end{defn}

\begin{prop}\cite[Proposition 5.14]{EKo}\label{prop.sb}
Let $u_\bullet$ be the rotation sequence for $(S,\sblue,\teal)$. Then, for $d =d(S,\sblue,\teal)$, we have $u_d =\teal$. Moreover, the relation
\begin{equation}\label{sb}
    [I_0, S, I_d]=[I_0+\sblue-\teal] 
    \expr [I_0- \textcolor{black}{u_1}+\textcolor{RoyalBlue}{u_0} -\textcolor{black}{u_2} +\textcolor{black}{u_1} -u_3+\textcolor{black}{u_2} \ \ldots\ -u_{d-1}+u_{d-2} -\textcolor{Green}{u_d}+u_{d-1}] ,
\end{equation}
between two reduced expressions holds. 
The relation of this form is called a \emph{switchback relation}.
\end{prop}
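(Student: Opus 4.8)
The plan is to pass to atoms through Proposition~\ref{prop.atominrotation}, turn reducedness into a single descent condition, and thereby locate $w_S$. By Proposition~\ref{prop.atominrotation} the expression \eqref{eq.tok2} is the atomic composition $\at_0*\at_1*\cdots*\at_k$, where $\at_i=[L_i,I_i,L_{i+1}]$ has maximal element $\ma{\at_i}=w_{I_i}=w_{S\setminus u_i}$. Set $m_k:=\ma{\at_0*\cdots*\at_k}=w_{I_0}*\cdots*w_{I_k}$. Since $\rightdes(m_k)\supseteq\rightdes(w_{I_k})=S\setminus u_k$, either $m_k=w_S$ or $\rightdes(m_k)=S\setminus u_k$ exactly; dually $\leftdes(m_k)=S\setminus\sblue$ unless $m_k=w_S$. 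Appending $\at_{k+1}$ introduces exactly one up-step, enlarging the right type by $u_k$ (the intervening down-step being automatically reduced), and such an enlargement is reduced if and only if $u_k\notin\rightdes(m_k)$, i.e. if and only if $m_k\neq w_S$. Hence $d$ is precisely the first index with $m_d=w_S$. As $\at_0*\cdots*\at_d$ is then a reduced composition of atoms, Lemma~\ref{corecore} shows it is a core $(K,L_{d+1})$-coset with maximal element $w_S$; that is, $\at_0*\cdots*\at_d=W_K\,w_S\,W_{L_{d+1}}$ with $K=L_0$.

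Next I would show both sides of \eqref{sb} are reduced expressions of the single coset $W_{I_0}w_S W_{I_d}$. The right-hand side is the contiguous block $[I_0,L_1,I_1,\ldots,L_d,I_d]$ cut out of $\at_0*\cdots*\at_d$ by deleting the initial up-step $[L_0,I_0]$ and the terminal down-step $[I_d,L_{d+1}]$; its maximal element is still $w_S$, so it represents the $(I_0,I_d)$-coset $W_{I_0}w_S W_{I_d}$, and it is reduced by the bookkeeping of the first step (the leading down-step is automatic, the block $\at_1*\cdots*\at_{d-1}$ is a reduced sub-composition, and the final up-step adjoins the non-descent $u_{d-1}$). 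Conceptually this is just the low-road expression of $W_{I_0}w_S W_{I_d}$ provided by Proposition~\ref{prop.lowroad}. The left-hand side $[I_0,S,I_d]$ is reduced as well, its unique up-step $I_0\subset S$ preserving the minimal element and the left redundancy, and it manifestly represents $W_{I_0}w_S W_{I_d}$. Consequently the two sides coincide in $\SC$ the moment their right types agree, i.e. as soon as $u_d=\teal$; granting that, \eqref{sb} relates two reduced expressions and is a braid relation by the singular Matsumoto theorem.

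It remains to prove $u_d=\teal$, which is the real content. Because $m_d=w_S$ is an involution, conjugation by $m_d$ is the automorphism $s\mapsto\con s$ of Notation~\ref{notation.prime}, and $\teal=\con{u_{-1}}$ by \eqref{eq.u0u-1}; thus the claim is equivalent to $u_d=m_d\,u_{-1}\,m_d^{-1}$. I would establish it by identifying the middle block $\at_1*\cdots*\at_{d-1}$ with the core $p^{\core}$ of $p:=W_{I_0}w_S W_{S\setminus\teal}$: feeding $p^{\core}$ into Proposition~\ref{prop.arexfinns} while choosing left descents in the rotation order $u_1,u_2,\dots$ produces atoms of the form \eqref{eq.at_i}, so by the converse half of Proposition~\ref{prop.atominrotation} the generators removed are exactly $u_\bullet$; the process stops (Remark~\ref{rem.[I]}) when the left-descent set collapses to $\leftred(p)=L_1$, and matching right types then forces $\rightred(p)=L_d$ and the terminal right type to be $S\setminus\teal=I_d$, i.e. $u_d=\teal$. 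A more symmetric variant checks that $\tilde u_i:=\con{u_{-1-i}}$ is the rotation sequence for $(S,\teal,\sblue)$ and plays the two ends of the sequence against each other using the reversal-invariance of $w_{I_0}*\cdots*w_{I_d}=w_S$.

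Steps~1 and~2 are routine descent bookkeeping; the entire difficulty sits in Step~3. Knowing that the composition reaches $w_S$ after $d$ atoms tells us \emph{that} the rotation halts but not \emph{where}, and proving that the final wall is exactly $\teal$ — that the rotation neither over- nor under-shoots — is the fine combinatorics of the rank-two residue at the corank-two set $K=S\setminus\{\sblue,\con\teal\}$ around which $u_\bullet$ turns. That is where I expect essentially all the work to lie.
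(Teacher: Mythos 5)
There is a genuine gap --- in fact two. First, for context: the paper does not reprove this statement at all. It is imported from \cite[Proposition 5.14]{EKo}, and the paper's proof consists solely of the observation that pre- and post-composing a reduced expression with inclusion steps (here $[K\subset I_0]$ and $[I_d\supset L_{d+1}]$) preserves reducedness, so that the $d$ of Definition~\ref{def.d} agrees with the $d$ of \cite{EKo}; both $u_d=\teal$ and the relation \eqref{sb} are then quoted. Your proposal instead attempts a self-contained proof, which would be a genuinely different route, but the two steps carrying all the content do not hold up. The descent bookkeeping in your Step 1 is incorrect as stated: by Definition~\ref{def.p*q}, reducedness of the up-step adjoining $u_k$ is the condition that $(m_k w_{L_{k+1}})\cdot w_{I_{k+1}}$ be reduced, equivalently $u_k\notin\rightdes(m_k w_{L_{k+1}})$; the relevant descent set is that of the minimal coset representative $m_k w_{L_{k+1}}$, not of $m_k$, and passing to that representative can create new descents. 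Concretely, in type $A_3$ (one-line notation in $S_4$, $s_i$ the adjacent transpositions) take $m_k=4132$: then $\rightdes(m_k)=\{s_1,s_3\}=S\setminus u_k$ exactly, with $u_k=s_2$ and $m_k\neq w_S$, yet for $L_{k+1}=\{s_1\}$ one has $m_kw_{L_{k+1}}=1432$ and $s_2\in\rightdes(1432)$, so the up-step adjoining $u_k$ is \emph{not} reduced. Hence ``reduced if and only if $m_k\neq w_S$'' is not a consequence of general descent bookkeeping; it holds for rotation sequences only because of their special structure, which is precisely the content of the result being proven.

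Second, your Step 3 --- the claim $u_d=\teal$, which you rightly identify as the real content --- is a plan rather than a proof, and the plan as sketched is circular. Proposition~\ref{prop.arexfinns} produces, for each chosen left descent, \emph{some} atomic reduced expression beginning with that letter; it gives no control over the subsequent letters, so ``choosing left descents in the rotation order $u_1,u_2,\dots$'' presupposes that these elements are the descents actually arising at each stage, which is essentially the statement $u_d=\teal$ (equivalently, the switchback relation) itself. Since neither Step 1 nor Step 3 is established, the proposal does not prove the proposition. The two honest options are either to do what the paper does --- verify that Definition~\ref{def.d} matches the setup of \cite[Proposition 5.14]{EKo} and cite it --- or to genuinely redo the analysis of rotation sequences from \cite[Section 5]{EKo}, which is a substantial piece of work and not a few lines of bookkeeping.
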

\begin{proof}
In general, a composition of the form $[I\subset J]\circ -$ or $-\circ[I\supset J]$ applied to a reduced expression is a reduced expression (see Definition~\ref{def.p*q}; for example, the $(I,J)$-coset $\pcoset\expr[I\subset J]$ has the maximal element $\ma{\pcoset}=w_J$, whence $(w_Jw_J\inv).\qcoset=e.\qcoset=\qcoset$ gives that $[I\subset J]\circ J_\bullet$ is reduced for any reduced expression $\qcoset\expr J_\bullet$).
In particular, 
\[ [I_0 -\textcolor{black}{u_1}+\textcolor{RoyalBlue}{u_0} -\textcolor{black}{u_2} +\textcolor{black}{u_1} -u_3+\textcolor{black}{u_2} \ \ldots\ -u_{k-1}+u_{k-2} -{u_k}+u_{k-1}]\]
is reduced if and only if \eqref{eq.tok2} is reduced. Therefore, the claims follow directly from \cite[Proposition 5.14]{EKo}.
\end{proof}

An observation for later use:
\begin{rem}\label{rem.redundsb}
    The $(I_0,I_d)$-coset involved in the switchback relation \eqref{sb} is not a core coset but has the left and right redundancies $L_1=I_0\setminus u_1$ and $L_{d}=I_d\setminus u_{d-1}$ respectively. This can be seen in the left hand side of \eqref{sb} by direct computations but also in the right hand side of \eqref{sb} which is a \emph{low road} in the sense of \cite[Section 4.9]{EKo}. 
\end{rem}

We record an additional information on rotation sequences.
\begin{lem}\label{lem.d+1}
Let $u_\bullet$ be the rotation sequence for $(S,\sblue,\teal)$ and $d=d(S,\sblue,\teal)$, as above. Then 
$
{u_{d+1} =\scon}$.
\end{lem}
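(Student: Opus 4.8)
The plan is to pin down $u_{d+1}$ by comparing the switchback coset with its inverse, since the recurrence of Definition~\ref{def:useq} is second order and the single datum $u_d=\teal$ provided by Proposition~\ref{prop.sb} does not by itself determine $u_{d+1}$. Write $p$ for the $(I_0,I_d)$-coset of the switchback \eqref{sb} for $(S,\sblue,\teal)$. By Proposition~\ref{prop.sb} we have $u_d=\teal$, hence $I_d=S\setminus\teal$, and the recurrence \eqref{urecurse} gives
\[u_{d+1}=w_{S\setminus\teal}\,u_{d-1}\,w_{S\setminus\teal}.\]
Thus it suffices to understand $u_{d-1}$ well enough to recognize the right-hand side as $\scon$.

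First I would invoke Remark~\ref{rem.redundsb}, which records that $\rightred(p)=I_d\setminus u_{d-1}$. Next, observe that the inverse coset $p\inv$ is exactly the switchback coset for the \emph{reversed} pair $(S,\teal,\sblue)$: reversing the expression $[I_0,S,I_d]$ of $p$ produces $[I_d,S,I_0]=[S\setminus\teal+\teal-\sblue]$. This pair is admissible, since the hypothesis $\sblue\neq\tcon$ is equivalent, under the involution $\con{\cdot}$, to $\teal\neq\scon$. Let $v_\bullet$ denote the rotation sequence for $(S,\teal,\sblue)$; by \eqref{eq.u0u-1} we have $v_0=\teal$ and $v_{-1}=\scon$, while Remark~\ref{rem.redundsb} applied to this pair gives $\leftred(p\inv)=I_d\setminus v_1$ (note $S\setminus v_0=S\setminus\teal=I_d$).

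The key step is then the elementary identity $\leftred(p\inv)=\rightred(p)$, which is immediate from \eqref{eq.red} together with $\mi{p\inv}=\mi{p}\inv$. Combining the last three facts yields $I_d\setminus v_1=I_d\setminus u_{d-1}$; since both $u_{d-1}$ and $v_1$ lie in $I_d$ (consecutive entries of a rotation sequence are distinct, as $L_i=I_i\setminus u_{i-1}$ forces $u_{i-1}\neq u_i$), we conclude $v_1=u_{d-1}$. Finally, running the recurrence \eqref{urecurse} backwards for $v_\bullet$ at index $0$ gives
\[\scon=v_{-1}=w_{S\setminus v_0}\,v_1\,w_{S\setminus v_0}=w_{S\setminus\teal}\,u_{d-1}\,w_{S\setminus\teal}=u_{d+1},\]
as desired. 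The main obstacle is exactly the under-determination noted at the outset: the whole content of the argument is to supply the missing ``second initial condition'' $v_1=u_{d-1}$, and this is precisely what the redundancy bookkeeping of Remark~\ref{rem.redundsb}, transported across the inversion $p\mapsto p\inv$, provides.
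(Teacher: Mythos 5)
Your proof is correct, and its central idea coincides with the paper's: both pass to the inverse coset $p\inv\expr[S\setminus\teal+\teal-\sblue]$, i.e.\ to the rotation sequence $v_\bullet$ of the reversed pair $(S,\teal,\sblue)$, and both finish by reading off $v_{-1}=\scon$ from Definition~\ref{def:useq}. The difference is in how the two rotation sequences get linked. The paper cites \cite[Proposition 4.7 and Proposition 5.14]{EKo} to assert that reversing the switchback relation \eqref{sb} is again a switchback relation, which yields the full identification $v_i=u_{d-i}$ at once and in particular $u_{d+1}=v_{-1}$. You instead establish only the single instance $v_1=u_{d-1}$, by matching redundancies: Remark~\ref{rem.redundsb} applied to both $p$ and $p\inv$, combined with the elementary identity $\leftred(p\inv)=\rightred(p)$ coming from \eqref{eq.red} and $\mi{p\inv}=\mi{p}\inv$; one step of the recurrence \eqref{urecurse} on each side then converts $v_1=u_{d-1}$ into $v_{-1}=u_{d+1}$. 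What this buys is a more self-contained argument, using only facts recorded in this paper rather than the external reversal statement from \cite{EKo}. The one point you should justify more carefully is that consecutive entries of a rotation sequence are distinct, which you need so that $u_{d-1},v_1\in I_d$ and the set equality $I_d\setminus v_1=I_d\setminus u_{d-1}$ forces $v_1=u_{d-1}$: your parenthetical reason is circular as phrased (the definition $L_i=I_i\setminus u_{i-1}$ by itself forces nothing), but the fact does follow from Proposition~\ref{prop.atominrotation}, since the well-definedness of the atomic coset $\at_i\expr[L_i+u_{i-1}-u_{i+1}]$ requires $u_{i-1},u_{i+1}\in I_i=S\setminus u_i$.
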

\begin{proof}
By \cite[Proposition 4.7 and Proposition 5.14]{EKo}, reversing \eqref{sb} yields the switchback relation for the coset $W_{Ls} w_S W_{Lt}\expr [Ls,S,Lt]$. Thus the rotation sequence $v_\bullet$ appearing in this switchback relation is given by $v_i = u_{d-i}$. 
At the same time, the rotation sequence $v_\bullet$ satisfy $\textcolor{Green}{v_0=t}$ and $\textcolor{Orange}{v_{-1}=\scon}$ as given in Definition~\ref{def:useq}. 
In particular, we have $\textcolor{Orange}{u_{d+1}= v_{-1}=\scon}$ as desired.
\end{proof}

\begin{rem}\label{rem.extendingsb}
The first sentence in the proof of Proposition~\ref{prop.sb} says that
\begin{equation}\label{extendsb}
    [L_0+ u- \textcolor{black}{u_1}+\textcolor{RoyalBlue}{u_0} -\textcolor{black}{u_2} +\textcolor{black}{u_1} -u_3+\textcolor{black}{u_2} \ \ldots\ -u_{d-1}+u_{d-2} -\textcolor{Green}{u_d}+u_{d-1} - u'],
\end{equation}
where $u,u'\in S$ are any elements making \eqref{extendsb} well-defined, is reduced. 
Among such $u\in S$, a unique choice which makes 
$[L_0+ u- {u_1}]$
atomic is $u=u_{-1}$.
Similarly, if the last factor $[L_d+u_{d-1} - u']$ is atomic then $u'=u_{d+1}$ must be the case. 
These observations show that \eqref{sb} extends uniquely to the equality between atomic reduced expressions
\begin{equation}\label{atomicsb}
\begin{aligned}
 &[L_0,I_0,S,I_{d},L_{d+1}] \\
 &= [L_0 + \textcolor{Magenta}{u_{-1}} + \sblue - \teal - \textcolor{Orange}{u_{d+1}}]\\
 & \expr [L_0 +\textcolor{Magenta}{u_{-1}} -\textcolor{black}{u_1}+\textcolor{black}{\textcolor{RoyalBlue}{u_0}} -\textcolor{black}{u_2} +\textcolor{black}{u_1} \ \ldots\ -u_{d-1}+u_{d-2} -\textcolor{Green}{u_d}+u_{d-1} - \textcolor{Orange}{u_{d+1}}]
\end{aligned}
\end{equation}
which also agrees with the maximal atomic expression from Definition~\ref{def.d}. 
\end{rem}

Now we obtain a new atomic reduced expression for \eqref{atomicsb} giving rise to a `slightly zoomed-out' relation between expressions of double cosets.

\begin{prop}\label{prop.braid}
For each pair $\sblue,\teal \in S$ such that $\teal \neq \scon$, the relation
    \begin{equation}\label{eq.atombraid}
    \begin{aligned}
     &[K +\textcolor{Magenta}{u_{-1}} -\textcolor{black}{u_1}+\textcolor{RoyalBlue}{u_0} -\textcolor{black}{u_2} +\textcolor{black}{u_1} -u_3+\textcolor{black}{u_2} \ \ldots\ -u_{d-1}+u_{d-2} -\textcolor{Green}{u_d}+u_{d-1} - \textcolor{Orange}{u_{d+1}}]\expr\\
     &[K+\textcolor{RoyalBlue}{\ucon_{d+1}} -\textcolor{black}{\ucon_{d-1}} +\textcolor{Magenta}{\ucon_d} -\textcolor{black}{\ucon_{d-2}} +\textcolor{black}{\ucon_{d-1}} -\ucon_{d-3}+\textcolor{black}{\ucon_{d-2}} \ \ldots\ -\ucon_{1}+\ucon_{2} -\textcolor{Orange}{\ucon_0}+\ucon_{1}- \textcolor{Green}{\ucon_{-1}}]    
    \end{aligned}
    \end{equation}
holds, where $K=S\setminus \{\sblue,\tcon\}$, the rotation sequence $u_\bullet$ is associated to the triple $(S,\sblue,\teal)$, and $d=d(S,\sblue,\teal)$.
In terms of the generating relations in $\SC$, the relation \eqref{eq.atombraid} is the composition
\begin{equation}\label{eq.braidcomp}
    \begin{aligned}
     &[K +\textcolor{Magenta}{u_{-1}} -\textcolor{black}{u_1}+\textcolor{RoyalBlue}{u_0} -\textcolor{black}{u_2} +\textcolor{black}{u_1} -u_3+\textcolor{black}{u_2} \ \ldots\ -u_{d-1}+u_{d-2} -\textcolor{Green}{u_d}+u_{d-1} - \textcolor{Orange}{u_{d+1}}]\\
     &\underset{switchback}{\expr}[K +\textcolor{Magenta}{u_{-1}} +\textcolor{RoyalBlue}{u_0} -\textcolor{Green}{u_d}- \textcolor{Orange}{u_{d+1}}]\\
     &\underset{downdown}{\overset{upup}{\expr}}[K +\textcolor{RoyalBlue}{u_0} +\textcolor{Magenta}{u_{-1}} - \textcolor{Orange}{u_{d+1}}-\textcolor{Green}{u_d}]=[K +\textcolor{RoyalBlue}{\ucon_{d+1}} +\textcolor{Magenta}{\ucon_d} - \textcolor{Orange}{\ucon_0}-\textcolor{Green}{\ucon_{-1}}]\\
     &\underset{switchback}{\expr}[K+\textcolor{RoyalBlue}{\ucon_{d+1}} -\textcolor{black}{\ucon_{d-1}} +\textcolor{Magenta}{\ucon_d} -\textcolor{black}{\ucon_{d-2}} +\textcolor{black}{\ucon_{d-1}} -\ucon_{d-3}+\textcolor{black}{\ucon_{d-2}} \ \ldots\ -\ucon_{1}+\ucon_{2} -\textcolor{Orange}{\ucon_0}+\ucon_{1}- \textcolor{Green}{\ucon_{-1}}]    
    \end{aligned}
\end{equation}
where the middle relations is the composition of two commuting relations upup and downdown.
\end{prop}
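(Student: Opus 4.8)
The plan is to justify each arrow of the displayed chain \eqref{eq.braidcomp} one at a time; since the composite of those arrows is by construction the asserted relation \eqref{eq.atombraid}, this suffices. Along the way I would note the bookkeeping: the first and last lines are \emph{reduced atomic} expressions, while the two intermediate lines are ordinary reduced singular expressions (of the non-alternating shape $++--$), so that \eqref{eq.atombraid} is indeed a relation between two reduced atomic expressions obtained by passing through non-atomic intermediates. The three relation-types to be exhibited are: an atomic switchback, a commuting pair up-up/down-down, and a second atomic switchback.

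First I would read the top arrow as the atomic switchback \eqref{atomicsb} for the triple $(S,\sblue,\teal)$, run from right to left. With $K=L_0$, $\sblue=u_0$, $\teal=u_d$, the expanded right-hand side of \eqref{atomicsb} is verbatim the first line of \eqref{eq.braidcomp}, and its collapsed left-hand side is $[K+u_{-1}+u_0-u_d-u_{d+1}]$, the second line; both are reduced by Remark~\ref{rem.extendingsb} and Proposition~\ref{prop.sb}.

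Next comes the purely formal middle step. Since $Ku_{-1}u_0=(S\setminus\{\sblue,\tcon\})\cup\{\tcon,\sblue\}=S$, the two `$+$'s form adjacent up-generators and the two `$-$'s adjacent down-generators that meet only at the full subset $S$; hence the up-up and down-down relations act on disjoint ends and commute, and applying both gives $[K+u_0+u_{-1}-u_{d+1}-u_d]$. I would then rewrite this via the bar of Notation~\ref{notation.prime}. Using $u_0=\sblue$, $u_{-1}=\tcon$ together with $u_d=\teal$ (Proposition~\ref{prop.sb}) and $u_{d+1}=\scon$ (Lemma~\ref{lem.d+1}) and the involutivity $\con{\con{x}}=x$, one reads off $u_0=\con{u_{d+1}}$, $u_{-1}=\con{u_d}$, $u_{d+1}=\con{u_0}$, $u_d=\con{u_{-1}}$, so that $[K+u_0+u_{-1}-u_{d+1}-u_d]=[K+\ucon_{d+1}+\ucon_d-\ucon_0-\ucon_{-1}]$, matching the third line.

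Finally the bottom arrow is another atomic switchback \eqref{atomicsb}, now for the triple $(S,\tcon,\scon)$, which is admissible since $\con{\teal}\neq\sblue$ follows from the hypothesis $\teal\neq\scon$ by applying the involution $\con{(-)}$. This last identification is the main obstacle and the only place demanding genuine computation: I must check that the rotation sequence for $(S,\tcon,\scon)$ is $v_i=\con{u_{d-i}}$ and that $d(S,\tcon,\scon)=d$. I would obtain both at once by transporting the switchback for $(S,\sblue,\teal)$ under the bar automorphism of $\SC$ (induced by $s\mapsto\con{s}$) and the expression-reversal used in the proof of Lemma~\ref{lem.d+1}, each of which preserves reducedness. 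Concretely this reduces to verifying that the recursion \eqref{urecurse} is stable under $u_j\mapsto\con{u_{d-j}}$: conjugation by $w_S$ sends $I_{d-i}=S\setminus u_{d-i}$ to $S\setminus\con{u_{d-i}}$ and $w_{I_{d-i}}$ to $w_S w_{I_{d-i}} w_S$, whence $w_{S\setminus v_i}\,v_{i-1}\,w_{S\setminus v_i}=w_S\,w_{I_{d-i}}u_{d-i+1}w_{I_{d-i}}\,w_S=w_S u_{d-i-1}w_S=v_{i+1}$ by \eqref{urecurse}, while the base values $v_0=\con{u_d}=\tcon$ and $v_{-1}=\con{u_{d+1}}=\sblue$ are correct; uniqueness of the two-sided recursion then pins down the whole sequence, and matching $v_d=\scon=\con{u_0}$ forces $d(S,\tcon,\scon)=d$. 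Substituting $v_i=\con{u_{d-i}}$ into the expanded side of \eqref{atomicsb} for $(S,\tcon,\scon)$ reproduces exactly the last line of \eqref{eq.braidcomp}, closing the chain.
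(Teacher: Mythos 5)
Your proof is correct and follows essentially the same route as the paper's: the top arrow is \eqref{atomicsb} read in reverse, the middle step is the commuting upup/downdown pair rewritten via $u_0=\sblue=\con{u_{d+1}}$ (Lemma~\ref{lem.d+1}) and $u_d=\teal=\con{u_{-1}}$ (Proposition~\ref{prop.sb}), and the bottom arrow is the switchback for $[K\sblue+\tcon-\scon]$ obtained by transporting \eqref{sb} under the bar automorphism and expression reversal. Your explicit verification that the recursion \eqref{urecurse} is stable under $u_j\mapsto\con{u_{d-j}}$ simply fleshes out what the paper states in one line (``differ from \eqref{sb} by the automorphism $u\mapsto\ucon$ and reversing the expression''), so it is the same argument in more detail rather than a different one.
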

\begin{proof}
Note that $K=L_0$ and the first relation in \eqref{eq.braidcomp} is \eqref{atomicsb} written reversed.
The next relation (upup and downdown) in \eqref{eq.braidcomp} is clear and, since $\textcolor{RoyalBlue}{u_0}=\sblue =\textcolor{RoyalBlue}{\ucon_{d+1}}$, by Lemma~\ref{lem.d+1}, and $\textcolor{Green}{u_d} =\teal = \textcolor{Green}{\ucon_{-1}}$, by Proposition~\ref{prop.sb}, we arrive at the expression
\begin{equation}\label{eqafteruudd}
[K + \textcolor{RoyalBlue}{\ucon_{d+1}}+ \textcolor{Magenta}{u_{-1}} - \textcolor{Orange}{u_{d+1}} - \textcolor{Green}{\ucon_{-1}}].
\end{equation}
The expression \eqref{eqafteruudd} contains the non-atomic expression
\begin{equation}\label{tconscon}
    [K\sblue+\textcolor{Magenta}{u_{-1}} - \textcolor{Orange}{u_{d+1}}] = [K\sblue+ \tcon - \scon]
\end{equation}
where the equality between the elements is given again by Lemma~\ref{lem.d+1}.
The switchback relation for \eqref{tconscon} is \rev{obtained from \eqref{sb} by applying the automorphism $u\mapsto \ucon$ and reversing the expression (see Proposition~\ref{prop.inv}):}
\begin{equation}\label{sb2}
\begin{aligned}
    [Ks
    , S, S\setminus \con{s}] &\expr [Ks
    - \textcolor{black}{\ucon_{d-1}} +\tcon -\textcolor{black}{\ucon_{d-2}} +\textcolor{black}{\ucon_{d-1}} -\ucon_{d-3}+\textcolor{black}{\ucon_{d-2}} \ \ldots\ -\ucon_{1}+\ucon_{2} -\scon+\ucon_{1}]\\
    &=[Ks
    - \textcolor{black}{\ucon_{d-1}} +\textcolor{Magenta}{\ucon_d} -\textcolor{black}{\ucon_{d-2}} +\textcolor{black}{\ucon_{d-1}} -\ucon_{d-3}+\textcolor{black}{\ucon_{d-2}} \ \ldots\ -\ucon_{1}+\ucon_{2} -\textcolor{Orange}{\ucon_0}+\ucon_{1}].
\end{aligned}
\end{equation}
Putting \eqref{sb2} back into \eqref{eqafteruudd}, we obtain the rest of \eqref{eq.atombraid}. 
This completes the proof.
\end{proof}

\begin{defn}\label{def.braid}
The \emph{atomic braid relation} associated to $(S,s,t)$, for $t\neq\con{s}$, is the relation  \eqref{eq.atombraid}.
\end{defn}

\rev{In other words,} letting 
\[\at= W_K w_{K\bar{t}} W_{K\bar{t}\setminus u_1}\expr [K+\tcon - u_1],\qquad 
\at'\expr
[Ks\setminus \bar{u}_{d-1}+\tcon - \bar{u}_{d-2}]\] and 
\[\atb = W_Kw_{Ks}W_{Ks\setminus \bar{u}_{d-1}}\expr[K+\sblue -  \bar{u}_{d-1}],\qquad 
\atb'\expr 
[K\bar{t}\setminus u_1 + \sblue-u_2],\]
the {atomic braid relation} associated to $(S,s,t)$ is 
\begin{equation}\label{eq.atomicab'}
    \underbrace{\at*\atb'*\cdots}_{m_{\at,\atb}} = \underbrace{\atb*\at'*\cdots}_{m_{\atb,\at}}
\end{equation}
whose left (resp., right) hand side is the (unique) longest reduced atomic product in $\SC$ starting with $\at*\atb'$ (resp., $\atb*\at'$). 
%
%
\rev{We have $m_{\at,\atb}=m_{\atb,\at}=d(S,s,t)+1$ by Definition~\ref{def.d}.

Here we think of $\mathtt{a}'$ as a twist of $\at$ since it is the unique atom of the form $[L'+\tcon - v']$ such that $\atb*\at'$ is well-defined. 
The ``$\cdots$'' parts in \eqref{eq.atomicab'} are described using a similar idea. 
Given an atom $\mathtt c \expr [L+u-\textcolor{violet}{v}]$ we denote by $\hat{\mathtt{c}}$ any atom of the form $[L'+\textcolor{violet}{v}-v']$. It makse sense to use $\hat{\mathtt{c}}$ in a composition since there is unique $L'$, depending on $p\in \SC$, such that $p*\hat{\mathtt{c}}$ is well-defined (and unique $v'$ such that $\hat{\mathtt{c}}$ is an atom). 
Then we can write \eqref{eq.atomicab'} more concretely as
\begin{equation}\label{eq.withhats}
    \underbrace{\at*\atb'*\hat\at *\hat{\atb'}*\hat{\hat{\at}}*\cdots}_{m_{\at,\atb}} = \underbrace{\atb*\at'*\hat\atb*\hat{\at'}*\hat{\hat\atb}*\cdots}_{m_{\atb,\at}}.
\end{equation}

Thus the atomic braid relation \eqref{eq.atomicab'} is of the same form as the regular braid relation, say between two simple generators $a$ and $b$ with $m_{ab} = m_{\at\atb}$, 
except that the composition does not alternate $\at$ and $\atb$ themselves but their appropriate twists (note that $\mathtt c'=\mathtt c=\hat{\mathtt c}$ if they are regular). See Examples~\ref{ex.ab2},~\ref{ex.abE8} for some nontrivial twists.}

\begin{ex}\label{ex.ab1}
    If $S=\{s,t\}$, 
    then \eqref{eq.atombraid} is exactly the regular braid relation
    \[ [\underbrace{st\cdots}_{m_{st}}] \expr \underbrace{[\emptyset,\{s\},\emptyset,\{t\},\emptyset,\cdots ]}_{2m_{st}} \expr \underbrace{[\emptyset,\{t\},\emptyset,\{s\},\emptyset, \cdots ]}_{2m_{st}}\expr[\underbrace{ts\cdots}_{m_{st}}].\]
\end{ex}

\begin{ex}\label{ex.commutingatoms}
A switchback relation with $d=1$ is of the form $[+s-t]=[-t+s]$ (this happens when $s,t$ belong to different irreducible compoments of $S$), so the associated atomic braid relation is the $m=2$ braid relation given by the composition
\begin{equation}
    [K+u-s+t-v]\underset{switchback}{\expr} [K+u+t-s-v]\underset{downdown}{\overset{upup}{\expr}} [K+t+u-v-s]\underset{switchback}{\expr} [K+t-v+u-s].
\end{equation}
\end{ex}

\begin{ex}\label{ex.ab2}
If $S$ is of type $A$, a switchback relation has either $d=1$ or $d=2$. So the atomic braid relations in this case are either described in Example~\ref{ex.commutingatoms} or of the form resembling the $m=3$ braid relation\rev{, for example, 
\[[K+s_4-s_9+s_3-s_6+s_9-s_7]\expr [K+s_3-s_1+s_4-s_7+s_1-s_6]\]
where $K=S\setminus\{s_3,s_4\}$ and $S=\{s_1,\cdots,s_9\}$ is of type $A_9$.}
\end{ex}

In a general (finite) type, the number $d+1$ may be strictly bigger than any $m_{st}$ for $s,t\in S$.
\begin{ex}\label{ex.ab4}
 Here is an example in type $D_4$ with $d+1 = 4$:
\[[\{s_1,s_3\}+s_2 -s_2+s_4 -s_4+s_2 -s_2+s_4 -s_4]\expr
[\{s_1,s_3\}+s_4 -s_4+s_2 -s_2+s_4 -s_4+s_2 -s_2],\]
where $s_2$ is the special element in $S=\{s_1,\cdots,s_4\}$ corresponding to the hub of the $D_4$ Dynkin diagram. 
\end{ex}
\begin{ex}\label{ex.abE8}
Here is an example in type $E_8$ with $d+1 = 8$:
\begin{equation*}
    \begin{aligned}
    &[K +s_1 -s_3 +s_4-s_7+s_3-s_5+s_7-s_7+s_5-s_3+s_7-s_4+s_3-s_1+s_4-s_4]\\
    &\expr
    [K +s_4-s_4+s_1-s_3+s_4-s_7+s_3-s_5+s_7-s_7+s_5-s_3+s_7-s_4+s_3-s_1]    
    \end{aligned}
\end{equation*}
where $K=\{s_2, s_3, s_5, s_6, s_7, s_8\}$ and it is an exercise for the reader to find how we index $S=\{s_1,\cdots,s_8\}$.
\end{ex}

\begin{rem}\label{rem.param}
The atomic braid relation associated to $(S,s,t)$ is the same as that associated to $(S,\con{t},\con{s})$.
It follows that the atomic braid relations are parametrized by the (unordered) subsets 
\[\{\{s,t\}\subset S\ |\ t\neq\con{s}\},\] while the switchback relations are parametrized by the ordered pairs 
\[\{(s,t)\in S\times S\ |\ t\neq\con{s}\}.\] 
The full list of the switchback relations, for irreducible $S$, is given in \cite[Section 6]{EKo}, and \cite[Section 5.6]{EKo} shows that the switchback relation for $(S,s,t)$ is trivial, i.e., of the form $[L+s-t]\expr[L+t-s]$ when $s,t$ belong to different connected component of $S$ and has the same form as the switchback relation for $(S^0,s,t)$ if $s,t\in S^0$ for a connected component $S^0$. 
This gives a classification of the atomic braid relations.
\end{rem}

An implicit point of Proposition~\ref{prop.braid} is another characterization of atomic braid relations, which we make explicit in the following two propositions.
\begin{prop}\label{prop.rexgraph}
Let $s,t\in S$ be such that $t\neq \con{s}$ and let 
\[p\expr [K+\tcon + \sblue - \teal - \scon]\rev{=:E_2}.\]
Then the reduced expression graph of $\pcoset$ is of the form
\begin{equation}\label{atomicbraidgraph}
\begin{tikzcd}
        &[3em]&[3em]  \arrow[dl,dash,swap,"downdown"] E_3 \arrow [dr,dash,"upup"]  &[3em]&[3em]   \\
  E_1 \arrow[r,dash,"switchback"] & E_2   \arrow [dr,dash,swap,"upup"]    &  &  \arrow[dl,dash,"downdown"] E_5  &  E_6 \arrow[l,dash,"switchback"]\\
   & & E_4 & & ,
\end{tikzcd}    
\end{equation}
\rev{
where $E_1,E_2,E_5,E_6$ are the expressions in \eqref{eq.braidcomp}, in the same order, and 
\[E_3=[K+\tcon + \sblue - \scon -\teal],\qquad E_4=[K+ \sblue +\tcon -\teal - \scon].\]} 
\end{prop}

\begin{proof}
By Proposition~\ref{prop.braid} and \rev{Theorem~\ref{thm.mats}}, it is enough to show that no braid relation in $\SC$ other than what appears \rev{ in \eqref{atomicbraidgraph} is applicable to each expression in \eqref{atomicbraidgraph}.} 

First consider \rev{$E_1$, namely} the first expression in \eqref{eq.braidcomp}.
Since it has alternating signs, no upup or downdown relation is applicable. Also, by Proposition~\ref{prop.atominrotation} and Proposition~\ref{prop.atom}, no switchback relation is applicable to any $[u_{i-1}-u_{i+1}]$. 
Therefore it remains to note that the only (consecutive) subexpression where a switchback relation is applicable is 
$[-u_1\ \cdots + u_{d-1}]$, since such a subexpression cannot start earlier or later for sign reasons and cannot be a subexpression of $[-u_1\ \cdots + u_{d-1}]$ by maximality of $d$ in switchback relations.

For the second expression in \eqref{eq.braidcomp}, it is clear that there is one place that upup is applicable, one place that downdown is applicable, and one place that switchback is applicable. 

For $E_3,E_4$, 
no switchback is applicable to the atomic subexpressions $[\sblue - \scon]$ and $[\tcon -\teal]$, respectively, and thus upup and downdown are the only applicable relations. 

The rest follows by symmetry.
\end{proof}

\begin{prop}\label{prop.suds}
Let $\pcoset$ be a double coset of the form $\pcoset\expr[K+s+t-v-u]$.
\begin{enumerate}
\item\label{graphtobraid} If $\pcoset$ has a rex graph of the form 
\eqref{atomicbraidgraph}, where $E_1,\cdots ,E_6$ are the reduced expressions of $\pcoset$, 
then the composition $E_1\expr E_6$ is an atomic braid relation.
\item If $t\neq w_{Kst} v w_{Kst}$ and if applying the switchback relation for $[+t-v]$ in $[K+s+t-v-u]$ yields an atomic expression, then the rex graph of $\pcoset$ is of the form \eqref{atomicbraidgraph} (which gives an atomic braid relation by Part~\eqref{graphtobraid}). 
\end{enumerate}
\end{prop}
\begin{proof}
We set $ S = Kst$ and, by renaming, assume that $t\neq \con{v}$ (if $t=\con{v}$ then swap $v$ and $u$ and apply the downdown relation). 

Let us prove the first statement. 
Let $p\expr [K+s+t-v-u]$ has a rex graph \eqref{atomicbraidgraph}.
Switchback, upup, and downdown relations are applicable to $[K+s+t-v-u]$ so we have to have  $[K+s+t-v-u]=E_2$, up to the symmetry of \eqref{atomicbraidgraph}.
Then $E_1$ is of the form $[K+s -u_1+u_0 -u_2 + u_1 \ \cdots - u_d + u_{d-1} -u]$ with the rotation sequence $u_\bullet$. Since $E_1$ has no other edge in the graph, it involves no other switchback relations, in particular, $[+s-u_1]$ and $[+u_{d-1} - u]$ are atomic.
The latter implies $s=u_{-1}$ and $u = u_{d+1}$ (see remark~\ref{rem.extendingsb}) and the claim is proved. 

Now suppose the right hand side of 
\[[K+s+t-v-u]\underset{switchback}{\expr}[K+s -u_1+u_0 -u_2 + u_1 \ \cdots - u_d + u_{d-1} -u]\] is atomic. Then we have $s=u_{-1}$ and $u = u_{d+1}$, and Proposition~\ref{prop.sb} and Proposition~\ref{prop.rexgraph} gives the second statement.
\end{proof}

\section{The atomic Matsumoto theorem}\label{s.mats}

In this section we let $(W,S)$ be an arbitrary Coxeter system and $I,J\subset S$ be finitary subsets.

\begin{thm}\label{mats}
Given a core $(I,J)$-coset $\pcoset$, any two atomic reduced expressions of $\pcoset$ are related by a composition of atomic braid relations (associated to $(M,s,t)$ for various $M\subset S$ finitary and $s,t\in M$ with $t\neq w_Msw_M$).
\end{thm}
\begin{proof}
Given two atomic expressions $L_\bullet$ and $K_\bullet$ of a core coset, we write $L_\bullet \sim K_\bullet$ if $L_\bullet$ and $K_\bullet$ are related by a composition of atomic braid relations.
Recall that the \emph{width} of an expression $[L_0,L_1,\cdots, L_m]$ is the number $m$. 

We proceed by induction on the minimal width of atomic reduced expressions of $\pcoset$. The base case $\pcoset\expr [I]$ has a unique reduced expression, hence the claim of the theorem is trivially true. 

Suppose that the claim is true for every core coset with an atomic reduced expression of width $<m$.
Let 
\begin{equation}\label{eq.p=sss}
p\expr L_\bullet = [L_0,L_1,\cdots, L_m] =[I+s-s' +  \ldots - s'']    
\end{equation}
and
\begin{equation}\label{eq.p=ttt}
p\expr K_\bullet = [K_0,K_1,\cdots, K_l] =[I+t-t' + \ldots - t'']    
\end{equation}
be two atomic reduced expressions (where $l$ is any positive integer). 

If $s=t$, then we also have 
$s'= w_{Is} s w_{Is}=t'$ since $L_\bullet,K_\bullet$ are atomic, thus $L_i=K_i$ for $i=0,1,2$.
\rev{Since $L_\bullet,K_\bullet$ are reduced, 
\[\ma{p} = w_{L_1}w_{L_2}\inv \ma{[L_2,\cdots, L_m]} = w_{K_1}w_{K_2}\inv\ma{[K_2,\cdots, K_l]}\]
holds by \eqref{eq.pmarex}, where $\ma{M_\bullet}$ denotes the maximal element in the coset $M_\bullet$ expresses.
We deduce
\[[L_2,\cdots, L_m]\expr [K_2,\cdots, K_l]\]
whose sides are two atomic reduced expressions covered by the induction hypothesis. 
Therefore,} the claim
$L_\bullet\sim K_\bullet$ follows from $[L_2,\cdots, L_m]\sim [K_2,\cdots, K_l]$. 

It is thus enough to consider the case $s\neq t$. By \cite[Corollary 4.20]{EKo}
, we have $s,t\in \leftdes(\ma{p})$ and by \cite[Proposition 4.21]{EKo} there exist reduced expressions (called high roads) of the form
\[p\expr[I,Is,Ist]\circ J_\bullet \expr [I,It,Ist]\circ J_\bullet,\]
where $J_\bullet$ expresses some $(Ist,J)$-coset $\qcoset$.
Then we have 
\[\mi{q}\inv J \mi{q}= \mi{q}\inv \mi{p}I \mi{p}\inv \mi{q}\subset W_{Ist} I W_{Ist} = W_{Ist}\]
where we use that $p$ is core in the first equality and $\mi{q}\in W_{Ist}\mi{p}$ in the inclusion.
By Kilmoyer's theorem \cite[Lemma 2.14]{EKo}, we obtain $ Ist\supset \mi{q}\inv J\mi{q}$ and thus
\[\leftred(q) = Ist\cap \mi{q}\inv J\mi{q}= \mi{q}\inv J\mi{q}.\] 
Note also that $|\leftred(q)| = |J|=|I| = |Ist|-2$. Naming  $\{u,v\} =  Ist\setminus \leftred(q)$, we have by Proposition~\ref{prop.lowroad}
\begin{equation*}
    q\expr [Ist - u - v]\circ q^{\core} 
\end{equation*}
from which it follows
\begin{equation}\label{eq.pqcore}
    p\expr  [I+s+t-v-u]\circ q^{\core}
\end{equation}
We may and do assume that $u\neq w_{Ist}\inv s w_{Ist}$ and $v\neq w_{Ist}\inv t w_{Ist}$, since we can swap $u$ and $v$ if an equality holds.
 
Applying the switchback relation for $(Ist,\textcolor{RoyalBlue}{t}, \textcolor{Green}{v})$ 
to \eqref{eq.pqcore}, we get
\begin{equation}\label{tvsb}
     p\expr [I + s {+ \textcolor{RoyalBlue}{t} - \textcolor{Green}{v}} - u ] \circ q^{\core}  \expr [I + s {-u_1 + \textcolor{RoyalBlue}{u_0} - \ldots - \textcolor{Green}{u_d} + u_{d-1}} -u ]\circ q^{\core} .
\end{equation}
By Lemma~\ref{lem.claim} below, the expression $[I + s {-u_1 + \textcolor{RoyalBlue}{u_0} - \ldots -\textcolor{Green}{u_d} + u_{d-1}} -u ]$ is atomic. Since $[I+s-s']$ is also atomic (recall the atomic expression \eqref{eq.p=sss}) we have $u_1=s'$. \rev{In the rest of the proof we let 
$u':=u_{d-1}$
to make the equations more readable.}

Now, taking an atomic reduced expression $M_\bullet\expr q^{\core}$, we obtain the atomic expression 
\[ [L_0,L_1,L_2\cdots, L_m] =[I+s-s' +  \ldots - s''] \expr [I + s {-s' + \textcolor{RoyalBlue}{t} - \ldots - \textcolor{Green}{v} + u'} -u ]\circ M_\bullet\] and  
\begin{equation}\label{eq.subtractss'}
   [L_2,\cdots,L_m]\expr [(Is\setminus s')+\textcolor{RoyalBlue}{t} - \ldots - \textcolor{Green}{v} + u' -u ]\circ M_\bullet 
\end{equation}
where the right hand side, being a subexpression of such, is an atomic reduced expression.
Then induction hypothesis gives $[L_2,\cdots, L_m]\sim  [(Is\setminus s')+\textcolor{RoyalBlue}{t} - \ldots - \textcolor{Green}{v}+ u' -u ]\circ M_\bullet$ , from which it follows 
\begin{equation}\label{eq.indstep}
    L_\bullet\sim [I + s {-s' + \textcolor{RoyalBlue}{t} - \ldots -\textcolor{Green}{v} + u'} -u ]\circ M_\bullet.
\end{equation}

Since the same arguments establishes (an analogue of) \eqref{eq.indstep} for $K_\bullet$, we 
have
\begin{equation}
\begin{aligned}
    L_\bullet &\overset{\eqref{eq.indstep}\text{ for $L_\bullet$}}{\sim}  [I + s \underbrace{-s' + t - \ldots -v + u'} -u ]\circ M_\bullet \\
    &\overset{\textcolor{black}{switchback}}{\expr} 
    [I + s \overbrace{+ t - v} - u ] \circ M_\bullet  \\
    &\underset{downdown}{\overset{upup}{\expr}}  [I + t \underbrace{+ s - u} - v ]\circ M_\bullet \\
    &\overset{\textcolor{black}{switchback}}{\expr} [I +t \overbrace{-t'+s - \ldots -u + v'} - v ]\circ M_\bullet \\
    &\underset{\eqref{eq.indstep}\text{ for $K_\bullet$}}{\sim} K_\bullet,
\end{aligned} 
\end{equation}
where, by Proposition~\ref{prop.suds}, the middle three relations compose to an atomic braid relation. The proof is complete.
\end{proof}

\begin{lem}\label{lem.claim}
    If $u_\bullet$ is a rotation sequence and 
    \[\pcoset\expr [I + s {-u_1 + u_0 - \ldots - u_d + u_{d-1}} -u ]\circ \rcoset\] is a core coset, then 
    $[I + s {-u_1 + u_0 - \ldots - u_d + u_{d-1}} -u ]$ is an atomic expression.
\end{lem}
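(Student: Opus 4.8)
The plan is to verify that each consecutive up--down factor of
\[E:=[I+s-u_1+u_0-\ldots-u_d+u_{d-1}-u]\]
is a core coset, which is exactly what it means for $E$ to be atomic. Parsing $E$ into its width-two pieces, the interior factors are $[L_i+u_{i-1}-u_{i+1}]=\at_i$ for $1\le i\le d-1$, which are already atomic by Proposition~\ref{prop.atominrotation}; so everything reduces to the two boundary factors $[I+s-u_1]$ and $[L_d+u_{d-1}-u]$, whose final entries $s$ and $u$ are \emph{not} a priori the expected rotation-sequence entries $u_{-1}$ and $u_{d+1}$. I would first record the elementary observation that every width-two factor $[L+a-b]$ preserves the cardinality of the type ($|L|\to|L|+1\to|L|$), so $E$ carries the type $I$ to a type of the same size $|I|$.

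Next I would upgrade coreness of $\pcoset$ to coreness of the coset $\pcoset_E$ expressed by $E$ alone. Writing $\pcoset=\pcoset_E*\rcoset$ as a reduced composition (legitimate since $E\circ\rcoset$ is a reduced expression of $\pcoset$, whence so is its prefix $E$), the three types involved are the domain $I$ of $\pcoset_E$, the shared object (the codomain of $E$), and the right type $J$ of $\rcoset$. By the previous paragraph the codomain of $E$ has size $|I|$, and $|J|=|I|$ because $\pcoset$ is core and its left and right redundancies are conjugate, hence equinumerous (\cite[Lemma 2.12]{EKo}). Thus all three types have equal cardinality, and the converse half of Lemma~\ref{corecore} shows that $\pcoset_E$ is core.

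With $\pcoset_E$ core I would propagate full redundancy to both ends. By Lemma~\ref{lem.redconst} applied to $\pcoset_E$, every prefix of $E$ has left redundancy $I$; in particular the prefix $\pcoset_2\expr[I+s-u_1]$ satisfies $\leftred(\pcoset_2)=I$. Since $\rightred(\pcoset_2)$ is contained in the codomain $Is\setminus u_1$, a set of size $|I|$, and $|\rightred(\pcoset_2)|=|\leftred(\pcoset_2)|=|I|$, we conclude $\rightred(\pcoset_2)=Is\setminus u_1$, so $\pcoset_2$ is core. The last factor is symmetric: applying Lemma~\ref{lem.redconst} to the reversed reduced expression of the core coset obtained by reversing $\pcoset_E$ (via \cite[Proposition 4.7]{EKo}) makes $\rightred([L_d+u_{d-1}-u])$ full, and the same cardinality squeeze then forces its left redundancy to be full as well, so this factor is core too.

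Combining the interior and the two boundary cases, every factor of $E$ is core, i.e. $E$ is atomic. The main obstacle is precisely the coreness of the two boundary factors, where $s$ and $u$ are unconstrained at the outset; the device that removes it is detaching $\rcoset$ through Lemma~\ref{corecore} to make $\pcoset_E$ core, after which the redundancy-cardinality squeeze yields coreness automatically, without ever having to identify $s=u_{-1}$ or $u=u_{d+1}$ by hand.
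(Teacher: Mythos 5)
Your proof is correct (modulo the caveat below, which the paper's own proof shares) but takes a genuinely different route. Both arguments begin the same way, reducing via Proposition~\ref{prop.atominrotation} to the two boundary factors. From there the paper argues by contradiction inside the switchback machinery: if $[I+s-u_1]$ were not atomic, Proposition~\ref{prop.atom} would let a switchback relation rewrite it, yielding a reduced expression of $\pcoset$ that starts with a down-step $[I\supset I']$; by Remark~\ref{rem.redundsb} the redundancy drops at that step, contradicting Lemma~\ref{lem.redconst} for the core coset $\pcoset$; the last factor is symmetric. You argue directly instead: the count $|I|=|J|$ (the two redundancies of the core coset $\pcoset$ are conjugate) plus the fact that width-two factors preserve the size of the type lets you invoke the converse half of Lemma~\ref{corecore} to detach $\rcoset$, so the coset $\pcoset_E$ expressed by $E$ is itself core; then Lemma~\ref{lem.redconst} applied to $E$ makes the left redundancy of the prefix $[I+s-u_1]$ full, the conjugacy--cardinality squeeze makes its right redundancy full as well, and the reversed argument (via \cite[Proposition 4.7]{EKo}) handles the last factor. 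Your route avoids Proposition~\ref{prop.sb} and Remark~\ref{rem.redundsb} entirely --- you never need to know which relation becomes available when a factor fails to be atomic --- and it isolates the useful intermediate fact that $\pcoset_E$ is core, which the paper leaves implicit; the paper's proof is shorter because it reuses the analysis of Section~\ref{s.switchbacks}.

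One point you should state as a hypothesis rather than pretend to derive: everything hinges on $E\circ\rcoset$ being a \emph{reduced} expression, and your parenthetical justification (``legitimate since $E\circ\rcoset$ is a reduced expression of $\pcoset$'') is circular, since reducedness is not part of the literal statement, which only assumes that the expressed coset is core. The assumption is genuinely needed: both Lemma~\ref{corecore} and Lemma~\ref{lem.redconst} are statements about reduced compositions, and redundancy can grow back along non-reduced ones. In fact the lemma fails without it. In type $A_3$ with $S=\{s_1,s_2,s_3\}$, the rotation sequence for $(S,s_2,s_1)$ is $u_0=s_2$, $u_1=s_3$, $u_2=s_1$ with $d=2$; taking $I=\{s_3\}$, $s=s_1$, $u=s_2$ gives $E=[\{s_3\}+s_1-s_3+s_2-s_1+s_3-s_2]$, and for any $(\{s_3\},\{s_1\})$-coset $\rcoset$ the (far from reduced) composition $E\circ\rcoset$ expresses $W_{s_3}w_SW_{s_1}$, which is core by Lemma~\ref{lem.maxcore}, even though the first factor $[\{s_3\}+s_1-s_3]$ is not atomic. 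The paper's proof makes the same silent assumption (``\ldots to give a reduced expression of $\pcoset$''), and the lemma is only ever applied to the reduced expression \eqref{tvsb} in the proof of Theorem~\ref{mats}, so this is a defect of the statement rather than of your argument; with reducedness granted explicitly, your proof is complete.
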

\begin{proof}
By Proposition~\ref{prop.atominrotation}, it is enough to show that $[I+s-u_1]$ is atomic and that $[+u_{d-1}-u]$ is atomic.

Let us prove that $[I+s-u_1]$ is atomic. Suppose not. Then, by Proposition~\ref{prop.sb}, a switchback relation applies to $[I+s-u_1]$ to give a reduced expression of $\pcoset$ starting with some $[I \supset I']$. 
Since the redundancy decreases in the first step of the latter expression (see also remark~\ref{rem.redundsb}), this contradicts  $\pcoset$ being a core coset (see e.g., \cite[Lemma 4.25]{EKo}).

Similarly, if the subexpression $[+u_{d-1}-u]$ is not atomic, then remark~\ref{rem.redundsb} applies here to result in a contradictory size of the redundancy for $p$.
\end{proof}

\begin{rem}
    Theorem~\ref{mats} does not say that the atomic reduced expressions are in correspondence with the reduced expressions in some Coxeter group. 
    Although this is true in type $A$ and type $B$ (as is proved in \cite[Theorem 3.11, Theorem 4.7]{KELP3}), new structures appear in other types. See Section~\ref{s.highrank}.
\end{rem}

\begin{rem}
    Theorem~\ref{mats} does not say that any relation between two atomic reduced expressions in $\SC$ is a composition of atomic braid relations. 
    For a core coset $\pcoset$ and its atomic reduced expressions, it seems more reasonable to consider the rex graph whose edges are the atomic braid relations, rather than the full subgraph of the rex graph for $\pcoset$ in $\SC$.
\end{rem}

\begin{cor}\label{cor.atomlen}
    All atomic reduced expressions of a given core coset are of the same width. 
\end{cor}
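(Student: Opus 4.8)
The plan is to deduce the statement immediately from the atomic Matsumoto theorem (Theorem~\ref{mats}) together with the fact, emphasized in the introduction, that an atomic braid relation has the same number of atoms on each of its two sides. By Theorem~\ref{mats}, any two atomic reduced expressions of a fixed core coset $\pcoset$ are joined by a finite chain of atomic braid relations. Since ``having equal width'' is an equivalence relation on atomic expressions, it suffices to verify that a single atomic braid relation relates two atomic reduced expressions of the same width; the equality of widths then propagates along the whole chain.

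It remains to confirm that both sides of the atomic braid relation \eqref{eq.atombraid} are compositions of the same number of atoms. This is read off directly from the explicit form of the relation. Setting $d=d(S,\sblue,\teal)$, the left-hand side of \eqref{eq.atombraid} decomposes, via Proposition~\ref{prop.atominrotation}, as the atoms $\at_0,\at_1,\dots,\at_d$ with $\at_i\expr[L_i+u_{i-1}-u_{i+1}]$, so it has width $d+1$. The right-hand side is, by the proof of Proposition~\ref{prop.braid}, the switchback relation for the reversed triple transported by the automorphism $u\mapsto\ucon$; using $u_d=\teal$ (Proposition~\ref{prop.sb}) and $u_{d+1}=\scon$ (Lemma~\ref{lem.d+1}), its ``up'' steps are indexed by $\ucon_{d+1},\ucon_d,\dots,\ucon_1$, that is, by $d+1$ values, so it too is a composition of $d+1$ atoms. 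Hence the two sides have equal width $d+1$.

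Combining the two steps, the width is constant along every atomic braid relation, and Theorem~\ref{mats} ensures the atomic reduced expressions of $\pcoset$ are all linked by such relations; therefore they share a common width. I expect no genuine obstacle here: the only point requiring care is the bookkeeping on the right-hand side of \eqref{eq.atombraid}, where one must invoke the coincidence of the parameter $d$ on both sides (equivalently, the identities $u_d=\teal$ and $u_{d+1}=\scon$) to see that exactly $d+1$ atoms appear.
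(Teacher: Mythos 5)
Your proof is correct and follows exactly the paper's argument: invoke the atomic Matsumoto theorem (Theorem~\ref{mats}) and observe that each atomic braid relation \eqref{eq.atombraid} relates two expressions of equal width. The extra bookkeeping you supply (both sides consisting of $d+1$ atoms, via Proposition~\ref{prop.sb} and Lemma~\ref{lem.d+1}) is a correct elaboration of the width-preservation fact the paper states without detail.
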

\begin{proof}
    This follows from Theorem~\ref{mats} since every atomic braid relation relates expressions of the same width.
\end{proof}

The width of an atomic expression of a core coset $\pcoset$ is an even number $2l$, where $l$ is the number of atomic cosets that are composed in the expression. 
Corollary~\ref{cor.atomlen} says that the following is well-defined.

\begin{defn}
    The \emph{atomic length} of a core coset $\pcoset$ is half of the width of an atomic expression of $\pcoset$. 
\end{defn}

\begin{rem}
One may define the atomic length for all double cosets by setting the atomic length of $\pcoset\in\SC$ to be the atomic length of $\pcoset^{\core}$, but this definition lacks desirable properties. 
For example, the atomic length of $(p\circ q)^{\core}$ need not agree with the sum of the atomic lengths of $\pcoset^{\core}$ and of $\qcoset^{\core}$. 
Also, the atomic length is not compatible with the Bruhat order in the sense that there exist $(I,J)$-cosets $\pcoset\leq q$ where the atomic length of $\pcoset^{\core}$ is strictly bigger than that of $\qcoset^{\core}$, even for $\qcoset=q^{\core}$ atomic.
\end{rem}

\section{Non-braid relations}\label{s.nonbraid}


We continue letting $(W,S)$ be an arbitrary Coxeter system.
Let us introduce two classes of non-braid relations between atomic expressions.

\begin{defn}
Associated to an atomic expression in $\SC$ of the form
$[I+s-s]$, we define the \emph{atomic quadratic relation} as 
\begin{equation}\label{quadratic}
    [I+s\underbrace{-s+s}-s] \underset{*-quadratic}{\expr} [I+s-s].
\end{equation}
Associated to an atomic expression in $\SC$ of the form $[I+s-t]$ with $s\neq t$, we define the \emph{atomic cubic relation} as the composition (of two commuting relations)
\begin{equation}\label{cubic}
    [I+s\underbrace{-t+t}\overbrace{-s+s}-t] \overset{*-quadratic}{\underset{*-quadratic}{\expr}} [I+s-t].
\end{equation}
\end{defn}

The relation \eqref{cubic} still holds when $s=t$ but is generated by \eqref{quadratic}, so we exclude such cases.

\begin{rem}\label{rem.nonbraidrels}
The left hand side of the atomic quadratic relation \eqref{quadratic} and the atomic cubic relation \eqref{cubic} are indeed atomic expressions. The only nontrivial part is $[(Is\setminus t) +t-s]$ in \eqref{cubic} being atomic, but this follows from Remark~\ref{rem.inverseatom} since the latter is the \rev{reverse} of the atomic expression $[I+s-t]$.
\end{rem}

The atomic quadratic relation (or the absence of other quadratic relations) is motivated by the following statement from \cite{KELP3} (see \cite[Lemma 2.10, Proposition 2.20]{KELP3}). We include a proof for convenience. 

\begin{prop}\label{atomatom}
Let $\at\expr [I+s-t]$ be an atomic $(I,J)$-expression (where $J=Is\setminus t$) and $\atb\expr [J+u-v]$ be an atomic $(J,K)$-expression.
\begin{enumerate}
    \item\label{atomatom1} If $\pcoset=\at*\atb\expr [I+s-t]\circ[J+u-v]=[I+s-t+u-v] $ is reduced, then $\pcoset$ is a core coset.
    \item\label{atomatom2} If the composition $\pcoset=\at*\atb$ is not reduced, then $\pcoset$ is core if and only if $I=J=K$ and $s=t=u=v$, in which case $\pcoset\expr[I+s-s]$.
\end{enumerate}
\end{prop}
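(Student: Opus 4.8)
For part (1) I would simply invoke Lemma~\ref{corecore}: atoms are core cosets by definition, so if $p=\at*\atb$ is reduced then $p$ is core. For the \emph{if} direction of part (2), a direct computation suffices: if $I=J=K$ and $s=t=u=v$ then $\at=\atb\expr[I+s-s]$, so $\ma\at=\ma\atb=w_{Is}$ and $p=W_I(w_{Is}*w_{Is})W_I=W_Iw_{Is}W_I\expr[I+s-s]$ is core (it is the atom $\at$), while the composition is not reduced, consistent with the hypothesis. The real content is the \emph{only if} direction, which I would approach by first characterizing exactly when the composition of two composable atoms fails to be reduced.

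The key step is this reducedness criterion. Applying \cite[Lemma 2.12]{EKo} to the core coset $\atb$, I have $\ma\atb=w_{Ju}$ and $w_J\inv\ma\atb=\mi\atb=w_Jw_{Ju}$. Since $\mi\atb$ is the (nonidentity) longest minimal-length representative of $W_J\backslash W_{Ju}$, it lies in $W_{Ju}$ and avoids $J$ on the left, forcing $\leftdes(\mi\atb)=\{u\}$. By Definition~\ref{def.p*q}, $\at*\atb$ is reduced iff $\ma\at.(w_J\inv\ma\atb)=w_{Is}.\mi\atb$ is reduced; as $w_{Is}$ is the longest element of $W_{Is}$, this holds iff $\leftdes(\mi\atb)\cap Is=\emptyset$, i.e. iff $u\notin Is$. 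Because $Is=J\sqcup\{t\}$ and $u\notin J$, this yields the clean equivalence
\[\at*\atb\text{ is not reduced}\iff u=t.\]

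Granting this, suppose $p$ is core and not reduced, so $u=t$. Then $\atb$ has ambient set $Ju=J\cup\{t\}=Is=:P$, the same as $\at$, and the core condition on $\atb$ gives $v=w_Ptw_P=w_P(w_Psw_P)w_P=s$; hence $\atb\expr[J+t-s]$, $K=P\setminus s=I$, and $p=\at*\atb=W_Iw_PW_I$ is the top $(I,I)$-coset of $W_P$. Such a top coset $W_Aw_PW_B$ is core iff $w_PAw_P=B$ (this is the atom condition $t=w_{Is}sw_{Is}$ in general form), so with $A=B=I$ our $p$ is core iff $w_PIw_P=I$, equivalently iff $w_Psw_P=s$. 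But the core condition on $\at$ reads $w_Psw_P=t$, so $p$ is core iff $s=t$; and $s=t$ forces $J=Is\setminus t=I$, whence $I=J=K$ and $s=t=u=v$, the asserted degenerate case.

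The hard part is the reducedness criterion in the second step: identifying $\leftdes(\mi\atb)=\{u\}$ and converting reducedness of the coset composition into the elementary condition $u\notin Is$. Once ``not reduced $\iff u=t$'' is established, the remainder is short and structural. In writing this up I would take care to justify the two facts used in passing — that $w_J\inv\ma\atb=\mi\atb$ for the core coset $\atb$, and that a top coset $W_Aw_PW_B$ is core iff $w_PAw_P=B$ — both of which follow from \cite[Lemma 2.12]{EKo} together with Definition~\ref{def.core}.
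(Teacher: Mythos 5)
Your proof is correct, and parts (1) and the ``if'' half of (2) coincide with the paper's own argument (which invokes Lemma~\ref{corecore} and the $*$-quadratic relation). For the ``only if'' half your overall strategy is also the paper's --- force $u=t$ from non-reducedness by a descent argument, then let coreness of $p$ collapse everything to the degenerate case --- but the tools differ in two places. First, the paper runs the descent argument on the \emph{left} factor: non-reducedness of $q\expr[I+s-t+u]$ gives $\mi{q}<\mi{\at}$ (the alternative, a jump in redundancy, being excluded because $\at$ is core), and then Howlett's theorem \cite[Lemma 2.12]{EKo} together with $\rightdes(\mi{\at})=\{t\}$ forces $t\in Ju$, hence $t=u$. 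You run it on the \emph{right} factor, computing $\leftdes(\mi{\atb})=\{u\}$ and unwinding Definition~\ref{def.p*q} directly; this mirror-image argument is arguably cleaner, since it yields an honest equivalence (not reduced $\iff u=t$) and needs no redundancy dichotomy. Second, for the endgame the paper applies the $*$-quadratic relation to get $p\expr[I+s-s]$ and invokes the definition of atomic coset (core plus an expression of that form means atom, so $s=w_{Is}sw_{Is}=t$), whereas you use the criterion that a top coset $W_Aw_PW_B$ is core iff $w_PAw_P=B$. That criterion is exactly Lemma~\ref{lem.maxcore} of the paper, which is proved there using Kilmoyer's theorem \cite[Lemma 2.14]{EKo}, not merely \cite[Lemma 2.12]{EKo} as you suggest; alternatively, for the coset through $w_P$ it admits a short direct proof from Definition~\ref{introdef.core}, since $W_Aw_P=w_PW_B$ holds iff $w_PAw_P=B$ and forces $W_Aw_PW_B=w_PW_B$. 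So the only thing to repair in a write-up is the justification of that criterion; the mathematics itself is sound.
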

\begin{proof}
     The first part is a special case of Lemma~\ref{corecore}. The `if' claim follows from the $*$-quadratic relation $[Is-s+s]\expr[Is]$ which also gives  $p\expr[I+s-s]$ in this case.

     Now suppose $p=\at*\atb\expr [I+s-t+u-v]$ is not reduced while $p$ is core. Then $q\expr [I+s-t+u]$ is not reduced, that is, $\mi{q}<\mi{\at}$ or $\leftred(q) \supsetneq \leftred(\at)$. The latter is not the case since $\at$ is core. 
     Since $\mi{\at} = \mi{q}.x$ for some $x\in W_{Ju}$ and since the only right descent of  $\mi{\at}$ is $t$, we have $t=x\in Ju$, thus $t=u$. It follows $s=v$ and $I=K$.
     Finally, the $*$-quadratic relation gives $p=[I+s-s]$ implying that $p$ is an atom and thus $p=\at$. The other equalities follows.
\end{proof}

Recall that by an {atomic expression} we mean a composition of (the unique reduced expressions of) atomic cosets, which is not necessarily reduced or core. We introduce an auxiliary definition useful for this section. 
\begin{defn}\label{def.adm}
An atomic expression $[I_0,\cdots, I_{2\ell}]$ is said to be \emph{admissible} if each $[I_0\cdots, I_{2m}]$ for $m\leq \ell$ expresses a core coset.
\end{defn}

\begin{rem}\label{rem.rexadm}
    A reduced atomic expression $[I_0,\cdots, I_{2\ell}]$ is automatically admissible since the redundancies for all $p_i\expr [I_0,\cdots,I_i]$ are the same (see Lemma~\ref{lem.redconst}).
    \rev{ In this case, every atomic subexpression \([I_{2k}, ..., I_{2m}]\) expresses a core coset. In general we do not know whether an atomic subexpression of an admissible expression is also an expression of a core coset.}
\end{rem}

We then have the following partial analogue of the Coxeter presentation.

\begin{prop}\label{prop.reducing}
A non-reduced admissible atomic expression is related to a reduced atomic expression by a composition of atomic braid \rrev{and} atomic quadratic relations\rrev{. 
Moreover, atomic quadratic 
relations may be applied only from the left hand side to the right hand side of \eqref{quadratic}.
}
\end{prop}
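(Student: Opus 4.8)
The plan is to induct on the width $2\ell$ of the admissible atomic expression $E=[I_0,\ldots,I_{2\ell}]$. If $E$ is already reduced there is nothing to prove, so I assume it is not and let $m$ be minimal such that the prefix $[I_0,\ldots,I_{2m}]$ is non-reduced. Writing $\pcoset:=p_{m-1}\expr[I_0,\ldots,I_{2m-2}]$ and $\at:=\at_m\expr[L+s-t]$ with $L=I_{2m-2}$, admissibility together with Remark~\ref{rem.rexadm} makes $\pcoset$ a \emph{reduced} core coset, while $\pcoset*\at=p_m$ is core (by admissibility) but its given length-$2m$ expression is non-reduced. The goal is to produce, using only the three allowed relations, an admissible atomic expression of strictly smaller width for the same coset, and then invoke the induction hypothesis. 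Since atomic braid relations preserve width while the atomic quadratic and cubic relations \eqref{quadratic} and \eqref{cubic} strictly decrease it, any reduction of this kind automatically applies the latter two only from left to right, which is exactly the ``moreover'' clause.

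The crucial ingredient is an atomic deletion/exchange statement. Because $\pcoset*\at$ is non-reduced with $\pcoset$ reduced and core, the generator $s$ adjoined by $\at$ must satisfy $s\in\rightdes(\ma{\pcoset})\setminus L$, so Proposition~\ref{prop.arexfinns}(2) yields a reduced atomic expression of $\pcoset$ whose final atom removes $s$. The atom condition $t=w_{Ls}sw_{Ls}$ then forces that final atom to be $\at$ when $s=t$ and $\at\inv\expr[L'+t-s]$, with $L'=Ls\setminus t$, when $s\neq t$. By the atomic Matsumoto theorem (Theorem~\ref{mats}) this rewriting of $\pcoset$ is realized by a composition of atomic braid relations, so after such braid moves I may assume that $E$ reads $(\text{reduced})\circ\at\circ\at\circ(\text{rest})$ or $(\text{reduced})\circ\at\inv\circ\at\circ(\text{rest})$ around positions $2m-2,\ldots,2m$.

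In the involution case $s=t$ the exposed tail is $[L+s-s+s-s]=\at*\at$, and a single atomic quadratic relation \eqref{quadratic} collapses it to $[L+s-s]=\at$, dropping the width by $2$; admissibility of the shortened expression is inherited since all prefixes still express the same core cosets (the cosets beyond position $2m$ are untouched), so the induction hypothesis applies. In the general case $s\neq t$ the tail is $[L'+t-s+s-t]=\at\inv*\at$, a \emph{non-core} coset to which neither a braid relation nor a single quadratic relation applies in place; this is where the cubic relation \eqref{cubic} is needed. The plan here is to use the switchback/rotation-sequence machinery (Proposition~\ref{prop.sb}, Proposition~\ref{prop.suds}, and the rex-graph description of atomic braid relations) to propagate the cancellation $-s+s$ through the reduced part until a three-atom pattern $\at_0\circ\at_0\inv\circ\at_0$ is exposed, at which point the atomic cubic relation \eqref{cubic} applies and strictly shortens the width, again producing an admissible expression for the induction.

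I expect the main obstacle to be precisely this last point: establishing that the non-core ``defect'' $\at\inv*\at$ can always, after atomic braid relations, be brought into the left-hand side of a cubic relation, i.e.\ that the atomic deletion process terminates by producing one of the two patterns \eqref{quadratic} or \eqref{cubic}. This is the heart of the argument and will rely on the fine combinatorics of atoms from Section~\ref{s.switchbacks}, together with Proposition~\ref{atomatom} to control exactly when compositions of atoms fail to be reduced and remain core. One must also verify at each reduction step that admissibility (Definition~\ref{def.adm}) is preserved, so that the inductive hypothesis genuinely applies to the shortened expression.
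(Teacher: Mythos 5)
Your skeleton coincides with the paper's: induction on width, an exchange-type statement (your appeal to part (2) of Proposition~\ref{prop.arexfinns}, which is precisely Lemma~\ref{lem.exchange} of the paper, and does require the short argument that non-reducedness forces $s\in\rightdes(\mi{\pcoset})\subset\rightdes(\ma{\pcoset})$ using coreness) to rewrite the reduced prefix so that its last atom is $\at\inv$, then the atomic quadratic relation when $s=t$, with the width-monotonicity observation giving the ``moreover'' clause. Up to and including the case $s=t$ your argument is correct and essentially the same as the paper's.

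The genuine gap is the case $s\neq t$, which you yourself flag as the main obstacle and leave as a plan; moreover the plan points at the wrong tool. No switchback/rotation-sequence analysis and no ``propagation'' of the defect $-s+s$ through the word is needed, and it is unclear how such a propagation could be formalized, since braid relations only apply within reduced windows while the defect sits in a non-reduced one. What actually works is a second, identical application of the exchange/corecore pair, one atom further to the left. Concretely, after your braid moves the prefix reads $[M_0,\dots,M_{2m-2}]\circ\at\inv$ with $[M_0,\dots,M_{2m-2}]$ reduced and core. Atomicity of $\at\inv\expr[L'+t-s]$ gives $s=w_{Ls}tw_{Ls}\neq t$, so $[L'+t-t]$ expresses a non-core coset; since the full coset $p_m$ is core and $\at\inv*\at$ equals $[L'+t-t]$ in $\SC$ by the $*$-quadratic relation, the converse direction of Lemma~\ref{corecore} forces $[M_0,\dots,M_{2m-2}]\circ[+t]$ to be non-reduced (this also yields $m\geq 2$, i.e., there is a reduced part left to peel: otherwise $p_m$ would be the non-core coset $[L'+t-t]$, contradicting admissibility). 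The exchange lemma then produces a reduced expression $N_\bullet$ of $[M_0,\dots,M_{2m-2}]$ ending in $[-t]$, and atomicity of its last atom forces that atom to be $[+s-t]=\at$. After the braid moves supplied by Theorem~\ref{mats}, the word therefore ends in $\at\circ\at\inv\circ\at$, exactly the left-hand side of the cubic relation \eqref{cubic}, and one application of it completes the induction. Without this step, the heart of your proof is absent.
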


Our proof of Proposition~\ref{prop.reducing} uses the following analogue of the (weak) exchange property for Coxeter groups.
\begin{lem}\label{lem.exchange}
    Let $\qcoset$ be a core $(I,J)$-coset and suppose $r = q\circ[J+s]$ is a (well-defined) non-reduced composition.
    \begin{enumerate}
        \item We have $\mi{q}> \mi{r}$.
        \item The core coset $\qcoset$ has an atomic reduced expression ending in $[-s]$.
    \end{enumerate}   
\end{lem}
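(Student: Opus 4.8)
The plan is to run everything through the up-step reducedness criterion of Definition~\ref{def.rex}: since $J\subset Js$, the generator $[J+s]=W_JeW_{Js}$ is an up-step, so $r=\qcoset\circ[J+s]$ is reduced precisely when $\mi{\qcoset}=\mi{r}$ and $\leftred(\qcoset)=\leftred(r)$.

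For part (1) I would first get the weak inequality for free: from $\qcoset=W_I\mi{\qcoset}W_J\subseteq W_I\mi{\qcoset}W_{Js}=r$ we have $\mi{\qcoset}\in r$, hence $\mi{r}\le\mi{\qcoset}$. It then suffices to rule out equality. If $\mi{r}=\mi{\qcoset}$, then by \eqref{eq.red} and the core hypothesis (Definition~\ref{def.core}) one computes $\leftred(r)=I\cap\mi{\qcoset}(Js)\mi{\qcoset}\inv\supseteq I\cap\mi{\qcoset}J\mi{\qcoset}\inv=\leftred(\qcoset)=I$, so $\leftred(r)=I=\leftred(\qcoset)$; together with $\mi{r}=\mi{\qcoset}$ this would make the composition reduced, contradicting the hypothesis. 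Therefore $\mi{r}<\mi{\qcoset}$.

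For part (2) the intended route is Proposition~\ref{prop.arexfinns}(2), which produces an atomic reduced expression $\qcoset\expr[I+\cdots+u-s]$ ending in $[-s]$ as soon as $s\in\rightdes(\ma{\qcoset})\setminus J$; since $s\notin J$ is built into well-definedness, the whole problem reduces to showing $s\in\rightdes(\ma{\qcoset})$. I would first upgrade part (1) to the equivalence $\mi{\qcoset}=\mi{r}\iff s\notin\rightdes(\mi{\qcoset})$: being minimal in the $(I,J)$-coset, $\mi{\qcoset}$ already has no left descent in $I$ and no right descent in $J$, so it fails to be minimal for the larger $(I,Js)$-coset $r$ exactly when it acquires a right descent at $s$. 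By part (1), non-reducedness thus yields $s\in\rightdes(\mi{\qcoset})$.

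The main obstacle is transferring this right descent from $\mi{\qcoset}$ to $\ma{\qcoset}$; propagating it along the factorization $\ma{\qcoset}=\mi{\qcoset}.w_J$ leads to an inconclusive computation, so instead I would exploit the \emph{other} core factorization $\ma{\qcoset}=w_I.\mi{\qcoset}$, which is reduced because $\mi{\qcoset}$ has no left descent in $I$. From $s\in\rightdes(\mi{\qcoset})$ we get $\ell(\mi{\qcoset}s)=\ell(\mi{\qcoset})-1$, and subadditivity of length gives
\[\ell(\ma{\qcoset}s)=\ell(w_I\mi{\qcoset}s)\le\ell(w_I)+\ell(\mi{\qcoset}s)=\ell(w_I)+\ell(\mi{\qcoset})-1=\ell(\ma{\qcoset})-1,\]
so $s\in\rightdes(\ma{\qcoset})$. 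Since $s\notin J$, Proposition~\ref{prop.arexfinns}(2) then supplies the desired atomic reduced expression of $\qcoset$ ending in $[-s]$, completing part (2).
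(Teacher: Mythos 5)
Your proof is correct and follows essentially the same route as the paper's: part (1) rules out $\mi{q}=\mi{r}$ by showing the redundancy is unchanged (you use $\leftred$, the paper uses the mirror criterion with $\rightred$) and deducing reducedness for a contradiction, while part (2) extracts the right descent $s$ of $\mi{q}$ from non-minimality in $r$ and transfers it to $\ma{q}$ via the core factorization $\ma{q}=w_I.\mi{q}$ before invoking Proposition~\ref{prop.arexfinns}(2). The differences are purely cosmetic.
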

\begin{proof}
Since $\qcoset\subset r$ we have $\mi{q}\geq \mi{r}$.
    Suppose $\mi{q}=\mi{r}$. Then 
     \[\rightred(r)\underset{def}{=}Js\cap \mi{r}\inv I \mi{r} \underset{\mi{q}=\mi{r}}{=} Js\cap \mi{q}\inv I \mi{q} = Js\cap J = \rightred(q),\]
     where we use that $\qcoset$ is a core coset in the last two equalities. It follows that  $q\circ[J+s]$ is reduced, contrary to the assumption. This proves the first claim.

     The condition $\mi{q}> \mi{r}$ implies $t\in\rightdes(\mi{q})$ for some $t\in Js$. But since $\rightdes(\mi{q})\cap J =\emptyset$, we have $t=s$. 
     Then 
\[s\in \rightdes(\mi{q})\subset \rightdes (w_I.\mi{q})=\rightdes(\ma{q})\] 
since $\qcoset$ is core. Now Proposition~\ref{prop.arexfinns} guarantees an atomic reduced expression of $\qcoset$ ending with $[-s]$. 
\end{proof}

\begin{proof}[Proof of Proposition~\ref{prop.reducing}]
We proceed by induction on the atomic length, which agrees with the induction on the width. Our induction begins with two base cases, namely the reduced expressions $[I]$ and $[I+s-t]$, where the claim is trivial. 

Let $m\geq 1$ and suppose the claim is true for any non-reduced admissible atomic expression of atomic length $\leq m$ and let 
\[p \expr \at_0*\cdots *\at_m \expr I_\bullet = [I_0,\cdots , I_{2m+2}]= [I+s_0-t_0+s_1-t_1\ \cdots +s_m-t_m]\] be a non-reduced admissible atomic expression of atomic length $m+1$, where each $\at_i$ is an atomic coset. We work with the `expression' $\at_0*\cdots *\at_m$ instead of the expression $I_\bullet$, identifying each $\at_i$ with its unique reduced expression $[I_{2i}+s_i-t_i]$.

The proper subexpression $\qcoset \expr \at_0*\cdots *\at_{m-1}$ is \rev{admissible} since $I_\bullet$ is an admissible atomic expression.
Applying the induction hypothesis, we assume that $\qcoset\expr p_0*\cdots *\pcoset_{m-1}= [I_0,\cdots, I_{2m}]$ is reduced.
Since $I_\bullet$ is not reduced, we have that $r\expr q\circ [I_{2m}+s_m]$ is not reduced. 

By Lemma~\ref{lem.exchange}, there is an atomic reduced exprssion $\qcoset\expr M_\bullet=[M_0,\cdots ,M_{2m}]$ so that \begin{equation}\label{eq1}
I_\bullet \sim M_\bullet\circ [I_{2m}+s_m - t_m] = [M_0,\cdots , M_{2m-2}]\circ [I_{2m-2}+t -s_m]\circ [I_{2m}+ s_m - t_m]     
\end{equation}
holds for some $t\in S$. Here `$\sim$' denotes a composition of atomic braid relations and follows by Theorem~\ref{mats}. The latter also justifies the width of $M_\bullet$ being $2m$.
Since $[I_{2m-2}+t -s_m]=[M_{2m-2},M_{2m-1},M_{2m}]$ is atomic, we have in fact $t = w_{I_{2m}s_m} s_m w_{I_{2m}s_m}$. Since $[I_{2m}+s_m-t_m]$ is also atomic, we have $t=t_m$. 

\rrev{If $s_m=t_m$, then} the atomic quadratic relation \eqref{quadratic} applies to the last part of \eqref{eq1}, in the correct direction. The resulting atomic expression has the atomic length $m$. This establishes the induction step in the case $s_m=t_m$. 
\rrev{To complete the induction step, we show in the next paragraph that the other case $s_m\neq t_m$ does not occur.}

\rrev{S}uppose $s_m\neq t_m$. Then  
\[[M_{2m-2}+t_m-s_m+s_m-t_m]\expr[M_{2m-2}+t_m-t_m]\]
neither is atomic nor expresses a core coset. In particular, we cannot have $\pcoset\expr [M_{2m-2}+t_m-s_m+s_m-t_m]$, i.e., we have $m\geq 2$.
Now consider the non-atomic composition
\[[M_0,\cdots, M_{2m-2}]\circ [+t_m-t_m]\] which is not reduced by Lemma~\ref{corecore}.
Since $[M_0,\cdots, M_{2m-2}]$ is reduced, it is the step `$+t_m$' that is not reduced, namely, we are again in the situation in Lemma~\ref{lem.exchange}. 
Lemma~\ref{lem.exchange} gives an atomic reduced expression $\rrev{[N_0,\cdots,N_{2m-2}]} \expr [M_0,\cdots, M_{2m-2}]$  ending in `$-t_m$'. 
We see that \eqref{eq1} continues to
\begin{equation}\label{eq.long}
\begin{split}
    I_\bullet&\sim [M_0,\cdots , 
    M_{2m-3}, M_{2m-2}] \circ [+t_m-s_m+s_m-t_m]\\
    &\sim [N_0,\cdots , 
    N_{2m-3}, N_{2m-2}] \circ [+t_m-s_m+s_m-t_m] \\
    &= [N_0,\cdots, N_{2m-\rrev{3}}]\circ [ 
    -t_m+t_m-s_m+s_m-t_m]\rrev{.}
\end{split}
    \end{equation}
The second equivalence in \eqref{eq.long} follows from Theorem~\ref{mats} since  $[M_0,\cdots, M_{2m-2}]\expr N_\bullet$ involves reduced atomic expressions.
\rrev{But \eqref{eq.long} implies 
\[M_\bullet\sim [N_0,\cdots,N_{2m-3}]\circ[\underbrace{- t_m+t_m}-s_m],\]
which contradicts $M_\bullet$ being reduced.}

\end{proof}

\begin{cor}\label{presentation}
All admissible atomic expressions of a (core) coset are related by atomic braid relations\rrev{ and }
atomic quadratic relations\rrev{.}
\end{cor}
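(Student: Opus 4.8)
The plan is to deduce this directly from Proposition~\ref{prop.reducing} and the atomic Matsumoto theorem (Theorem~\ref{mats}), so that the work is almost entirely bookkeeping. Let $\pcoset$ be the coset in question and let $I_\bullet$ and $K_\bullet$ be two admissible atomic expressions of $\pcoset$. Since the full expression is its own even prefix, admissibility (Definition~\ref{def.adm}) already forces $\pcoset$ to be core, which matches the hypothesis of Theorem~\ref{mats}; thus there is no loss in treating $\pcoset$ as core from the start.

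First I would reduce each of the two expressions. If $I_\bullet$ is non-reduced, Proposition~\ref{prop.reducing} relates it, through a composition of atomic braid, quadratic, and cubic relations, to some reduced atomic expression $I'_\bullet$; if $I_\bullet$ is already reduced I simply set $I'_\bullet = I_\bullet$. Carrying out the same step for $K_\bullet$ yields $K'_\bullet$. The point to record here is that the atomic braid, quadratic, and cubic relations all hold as identities in $\SC$, so each reduction step preserves the element of $\SC$ being expressed; hence $I'_\bullet$ and $K'_\bullet$ are reduced atomic expressions of one and the same core coset $\pcoset$.

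Finally I would invoke Theorem~\ref{mats} for $I'_\bullet$ and $K'_\bullet$: two atomic reduced expressions of the same core coset are related by a composition of atomic braid relations. Concatenating the three segments, namely from $I_\bullet$ to $I'_\bullet$, from $I'_\bullet$ to $K'_\bullet$, and from $K'_\bullet$ back to $K_\bullet$, produces a single composition of atomic braid, quadratic, and cubic relations carrying $I_\bullet$ to $K_\bullet$, which is exactly the claim.

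I do not expect a genuine obstacle, since the statement merely packages two results already in hand. The one point deserving a line of care is the verification that the reductions furnished by Proposition~\ref{prop.reducing} do not change the expressed coset; this is immediate once one notes that all three relation families hold by construction in $\SC$, so that the reduced $I'_\bullet$ and $K'_\bullet$ are legitimate reduced atomic expressions of $\pcoset$ to which Theorem~\ref{mats} applies.
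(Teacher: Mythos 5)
Your proposal is correct and follows essentially the same route as the paper: reduce each admissible expression to a reduced one via Proposition~\ref{prop.reducing} (noting, as in Remark~\ref{rem.rexadm}, that reduced atomic expressions are themselves admissible, so both cases are covered), then connect the two reduced expressions by Theorem~\ref{mats} and concatenate. The paper's proof is just this argument stated in one line, so there is nothing to add.
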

\begin{proof}
    This follows from Proposition~\ref{prop.reducing}, Remark~\ref{rem.rexadm}, and Theorem~\ref{mats}.
\end{proof}

\begin{rem}
    We may have a stronger analogue of the Coxeter presentation than Corollary~\ref{presentation} by including non-admissible atomic expressions. \rrev{The analogue should involve not only the atomic braid and quadratic relations but also the atomic cubic relations \eqref{cubic}.} \rev{A possible proof is } by induction similar to that of Proposition~\ref{prop.reducing}, but we need to consider the possibility that \rev{the coset $\qcoset$ used in the proof of Proposition~\ref{prop.reducing}} is not a core coset. The latter leads us to remark~\ref{rem.allcoset}.
\end{rem}

\begin{rem}\label{rem.allcoset}
We may have a different analogue of the Coxeter presentation than Corollary~\ref{presentation} by including atomic expressions of non-core cosets, namely, all cosets generated by the atomic cosets.
There a class of generating non-braid relations would be
\begin{equation}\label{eq.newquad}
    [I+s-t+t-s]\expr [I+s-s]
\end{equation}
arising from the composition $\at*\at\inv$ for each atomic coset $\at\expr[I+s-t]$\rrev{, replacing the old quadratic relations \eqref{quadratic}, and the cubic relations~\eqref{cubic} would also be necessary}.
But we may need more relations 
for a presentation whose proof may also be not similar to ours. 
\end{rem}

A consequence of Corollary~\ref{presentation} is a new presentation of Demazure operators, which is a `zoomed-out' version of the `zoomed-in' presentation in \cite{KELP1}. 
To make this precise, we introduce a subcategory.

\begin{defn}\label{def.demcore}
The \emph{atomic nilCoxeter algebroid}\footnote{Denoted by $\mathcal AD$ in \cite{KELP3}; see Proposition \ref{Dcore=AD} and \cite[Definition 2.25, Theorem 2.26]{KELP3}} 
$\Dem^{\core}$
is the subcategory of the nilCoxeter algebroid $\Dem$ 
spanned by the morphisms $\partial_\pcoset$ associated to core cosets $\pcoset$.   
\end{defn}

Note that $\{\pa_p\ |\ p\in\SC\text{ is a core coset}\}$ is a basis of $\Dem^{\core}$ since $\pa_p$, for all double cosets $p$, form a basis of $\Dem$ (see \cite[Corollary 3.20]{KELP1}).

\begin{prop}\label{Dcore=AD}
Definition~\ref{def.demcore} does define a category, that is, $\Dem^{\core}$ is closed under composition. 
In fact, $\Dem^{\core}$ is the subcategory of $\Dem$ generated by the atomic cosets.
\end{prop}
\begin{proof}
\rev{By Lemma~\ref{corecore},}
a reduced composition of core cosets \rev{belongs to $\Dem^{\core}$.}
But any non-reduced composition in $\Dem$, being zero, also belongs to $\Dem^{\core}$. 
This proves the first statement. 
The last statement follows from Proposition~\ref{prop.arexfinns} (see also \cite[Definition 2.25, Theorem 2.26]{KELP3}).
\end{proof}


For concreteness, for $I\subset S$ finitary and $s\in S\setminus I$, we denote by $\!^I_s\at$ (resp., $\at^I_s$) the (unique) atomic coset of the form $\!^I_s\at= W_Iw_{Is}W_J$ (resp., $\at^I_s= W_Jw_{Is}W_I$). 
Thus, for $Is=Jt$ and $s=w_{Jt}tw_{Jt}$ we have 
\[\!^I_s\at\expr [I,Is,J] = [I,Jt,J ]\expr \at^J_t\]
(see Remark~\ref{rem.I,s=atoms}). 
This provides a purely combinatorial indexing of the atomic cosets. Note that 
\begin{equation}\label{eq.ainv}
    \!^I_s\at= (\at^I_s)\inv
\end{equation} (see Remark~\ref{rem.inverseatom}).

\begin{thm}\label{thm.presentDemazure}
The category $\Dem^{\core}$, associated to a Coxeter system $(W,S)$, admits the following presentation by generators and relations, as a $\kk$-linear category. \rev{It is generated by }
\[\{ \pa_{\at^J_t}\ |\ J\subset S,\ t\in S\setminus J,\ Jt \text{ finitary}\}=\{\pa_\at\ |\ \at\text{ is an atom}\}=\{ \pa_{^I_s\!\at}\ |\ I\subset S,\ s\in S\setminus I,\ Is \text{ finitary}\}\]
\rev{subject to the relations}
\begin{itemize}
    \item for each finitary subset $I\subset S$ and elements $s\neq t$ in $S\setminus I$, the atomic braid relation \[
    \underbrace{\pa_\at\circ \pa_{\atb'}\circ \cdots}_{m} = \underbrace{\pa_\atb\circ \pa_{\at'}\circ \cdots}_{m},\]
    where the atoms $\at',\atb',\cdots$ and $m\in \mathbb Z_{\geq 2}$ depend on $\at=\!^I_s\at$ and $\atb=\!^I_t\at$ (see Definition~\ref{def.braid});
    \item for each atom $\at=\at^I_s$
    the \emph{atomic nil-quadratic relation}  
    \[\partial_\at\circ \partial_{\at\inv}=
    0,\qquad
    \text{ i.e.,}\qquad 
    \pa_{\at^I_s}\circ \pa_{^I_s\!\at}=0.\]
\end{itemize}
\end{thm}
\begin{proof}
\rev{Proposition~\ref{Dcore=AD} shows that the atomic Demazure oparators generate $\Dem^{\core}$.}
Since an atomic braid relation is a composition of singular braid relations, the atomic braid relations hold in $\Dem$. 
That the atomic nil-quadratic relations hold follows from \rev{that  
\[\at^I_s * \!^I_s\at \expr [Is\setminus t+t-s]\circ[I+s-t]=[Is\setminus t +t\underbrace{-s+s}-t],\]
 where $t=w_{Is}sw_{Is}$, is an expression containing $[Is-s+s]$.}

Now we show that the above \rev{list of relations is complete}.
Theorem~\ref{mats} covers the reduced expressions, i.e., the nonzero compositions in $\Dem^{\core}$. 
It remains to show that if $\partial_{[\at_\bullet]}=\partial_{\at_1}\circ \partial_{\at_2}\circ \cdots \circ \partial_{\at_m}$ is zero, for $\at_i$ atomic, then the expression $\at_\bullet=[\at_0,\cdots,\at_m]$ is related by atomic braid relations to an expression $[\atb_0,\cdots,\atb_m]$ with  $\atb_k=\atb_{k+1}\inv$ at some index $k$.
Since 
\rrev{the left hand side of the atomic quadratic relation \eqref{quadratic} is of the form $\at *\at\inv$,} Corollary~\ref{presentation} gives the claim for when $\at_\bullet$ is an admissible atomic expression. 


Now suppose $\at_\bullet$ is not an admissible atomic expression. In this case, there exists a proper admissible subexpression $\qcoset= \at_0*\at_1*\cdots * \at_k$ of atomic length $\geq 1$ such that $r = \at_0*\cdots * \at_k*\at_{k+1}$ is not a core coset.
If $\qcoset= \at_0*\at_1*\cdots * \at_k$ is not reduced then, as in the above paragraph,  Corollary~\ref{presentation} gives the claim.
If $\qcoset= \at_0*\at_1*\cdots * \at_k$ is reduced, then $\qcoset$ has an atomic reduced expression ending at $[L+u-s]$, where $s\in S$ is such that $\at_{k+1} \expr [K+s-t]$, by Lemma~\ref{lem.exchange}.
By Theorem~\ref{mats}, we may assume that $\at_k\rev{\expr}[L+u-s]$.
But $t=w_{Ks}sw_{Ks} = u$ implies $\at_k=\at_{k+1}\inv$. 
\end{proof}

\part{Core combinatorics}
\section{Corank 2 case}\label{s.dihedral}

Let $(W,S)$ be a finite Coxeter system.
Note that if $|S\setminus I| = |S\setminus J| \in\{0,1\}$ then a core $(I,J)$-coset is either the identity coset or the unique atomic $(I,J)$-coset and thus has a unique reduced expression which is atomic.
In this section we consider the next easiest case, namely, the core $(I,J)$-cosets where $|S\setminus I| = |S\setminus J| = 2$.
Our goal is to explain how the discussion from Section~\ref{s.switchbacks}  completely describes the atomic reduced expressions.

We denote the dihedral group of order $2m$ by $W(I_2(m))$ viewed as the Coxeter group with the generators $\{a,b\}$ and the relation $\underbrace{ab\cdots}_m =\underbrace{ba\cdots}_m$. 
Then $W(I_2(m))$ has exactly $2m+1$ reduced expressions, whose list is
\begin{equation}\label{eq.rexsdihed}
e,\ a,\ b,\ ba,\ ab,\ \cdots\ ,\ \ka{m},\ \kb{m}.    
\end{equation}
where we use the notation 
\[ \ka{k} = \underbrace{\cdots ba}_k ,\quad \kb{k}= \underbrace{\cdots ab}_k.\]

The following lemma is implicit in the construction of the switchback relation.
\begin{lem}\label{lem.ka}
Let $J\subset S$ be such that $S\setminus J = \{s_1,s_2\}$ for $s_1\neq s_2$ and let $k>0$. 
For $i=1,2$, there exists exactly one atomic expression, denoted by $E(i,k)$, ending in $[Js_i,J]$ of atomic length $k$.
The expression $E(1,k)$ is reduced if and only if $k\leq d+1$, where  $d+1$ is the atomic length of the atomic braid relation associated to $(S,\textcolor{RoyalBlue}{\con{s_1}},\textcolor{Green}{s_2})$, 
i.e., $d=d(S,\textcolor{RoyalBlue}{\con{s_1}},\textcolor{Green}{s_2})$ 
(see Definition~\ref{def.braid} 
and recall $\textcolor{RoyalBlue}{\con{s_1}} = w_S \textcolor{Orange}{s_1} w_S$).
Moreover, $E(1,d+1)\expr E(2,d+1)$ is an atomic braid relation.
\end{lem}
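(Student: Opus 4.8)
The plan is to translate the entire statement into the language of rotation sequences via Proposition~\ref{prop.atominrotation}, and then read off the three assertions from Definition~\ref{def.d}, Proposition~\ref{prop.sb}, Lemma~\ref{lem.d+1}, and the atomic braid relation of Proposition~\ref{prop.braid}. In the corank-$2$ situation an atomic expression of atomic length $k$ is a list $[P_0,Q_0,P_1,\dots,P_{k-1},Q_{k-1},P_k]$ with $|S\setminus P_j|=2$, $|S\setminus Q_j|=1$, and each $[P_j,Q_j,P_{j+1}]$ atomic; by Proposition~\ref{prop.atominrotation} this is exactly a consecutive window of a rotation sequence $u_\bullet$, with $Q_j=S\setminus u_j$ and $P_j=S\setminus\{u_{j-1},u_j\}$. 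Requiring the expression to end in $[Js_i,J]$ forces $u_{k-1}=s_{3-i}$ and $u_k=s_i$. Since each $w_{S\setminus u_j}$ is an involution, the recurrence of Definition~\ref{def:useq} runs backwards, so pinning the top two values $u_{k-1},u_k$ determines $u_{k-2},\dots,u_{-1}$ uniquely; this yields exactly one sequence and hence a unique $E(i,k)$, proving the first assertion.

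Next I would identify $E(1,d+1)$. Taking $(\sblue,\teal)=(\con{s_1},s_2)$, Proposition~\ref{prop.sb} gives $u_d=\teal=s_2$ and Lemma~\ref{lem.d+1} (applied with $\sblue=\con{s_1}$) gives $u_{d+1}=s_1$. Thus the maximal reduced expression \eqref{eq.tok2} (the case $k=d$) has last atom $[L_d+u_{d-1}-u_{d+1}]$ passing through $I_d=S\setminus s_2=Js_1$ and ending at $L_{d+1}=S\setminus\{s_1,s_2\}=J$; it has atomic length $d+1$ and ends in $[Js_1,J]$, so by uniqueness it equals $E(1,d+1)$, and it is reduced by Definition~\ref{def.d}. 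For $k\le d+1$ the expression $E(1,k)$ is the length-$k$ consecutive suffix of $E(1,d+1)$ (in the rotation indexing, the atoms carrying up-elements $u_{d-k},\dots,u_{d-1}$), and a consecutive subexpression of a reduced expression is reduced by the symmetric reformulation following Definition~\ref{def.rex}; hence $E(1,k)$ is reduced.

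The main obstacle is the converse, that $E(1,k)$ is not reduced for $k\ge d+2$, and I would obtain it by reversing expressions. It is enough to treat $k=d+2$, since $E(1,d+2)$ occurs as a consecutive suffix of every $E(1,k)$ with $k\ge d+2$, and a consecutive subexpression of a reduced expression would itself be reduced. Now $E(1,d+2)$ prepends a single atom to $E(1,d+1)$. By \cite[Proposition~4.7]{EKo} and the proof of Lemma~\ref{lem.d+1}, the reversed sequence $v_i=u_{d-i}$ is the rotation sequence for $(S,s_2,\con{s_1})$ with $d(S,s_2,\con{s_1})=d$; reversing $E(1,d+2)$ and tracking the index shift turns it into the atomic expression \eqref{eq.tok2} for $v_\bullet$ of atomic length $d+2$, i.e. the case $k=d+1>d$ of Definition~\ref{def.d}, which is therefore not reduced. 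Since reversal preserves reducedness \cite[Proposition~4.7]{EKo}, $E(1,d+2)$ is not reduced. I expect this step to need the most care, because it rests on the index bookkeeping of the reversal and on the symmetry $d(S,s_2,\con{s_1})=d(S,\con{s_1},s_2)$.

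Finally, for the braid statement I would match $E(1,d+1)$ and $E(2,d+1)$ with the two sides of the atomic braid relation \eqref{eq.atombraid} for $(S,\con{s_1},s_2)$. Reading \eqref{eq.atombraid} as a composition of atoms, its left-hand side is precisely \eqref{eq.tok2}, already identified with $E(1,d+1)$. Its right-hand side is a reduced atomic expression of atomic length $d+1$ whose last atom $[+\ucon_1-\ucon_{-1}]$ passes through $S\setminus\ucon_0=S\setminus s_1=Js_2$ and ends at $S\setminus\{s_1,s_2\}=J$ (using $\ucon_0=s_1$ and $\ucon_{-1}=s_2$, since $u_0=\con{s_1}$ and $u_{-1}=\con{s_2}$); thus it ends in $[Js_2,J]$ and, by the uniqueness from the first assertion, equals $E(2,d+1)$. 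Hence \eqref{eq.atombraid} is exactly the relation $E(1,d+1)\expr E(2,d+1)$, which by Definition~\ref{def.braid} is the atomic braid relation associated to $(S,\con{s_1},s_2)$, completing the proof.
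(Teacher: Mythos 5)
Your first paragraph contains a genuine gap, and it is precisely the point where the paper's own proof does its real work. You claim that, in corank $2$, \emph{every} atomic expression of atomic length $k$ is ``exactly a consecutive window of a rotation sequence'' via Proposition~\ref{prop.atominrotation}. This is false: an atom followed by its inverse, e.g.\ $[L+s-t]\circ[(Ls\setminus t)+t-s]=[L+s-t+t-s]$, is a legitimate atomic expression (the inverse of an atom is an atom; see Remark~\ref{rem.nonbraidrels} and the left-hand sides of \eqref{quadratic} and \eqref{cubic}), but its two consecutive middle subsets coincide, whereas consecutive terms of a rotation sequence are always distinct (indeed $u_{j+1}\in I_j=S\setminus u_j$). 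So such an expression admits no encoding $Q_j=S\setminus u_j$, and Proposition~\ref{prop.atominrotation} does not apply to it. Consequently, going right to left there are exactly two composable atoms at each step (one of them being the inverse of the atom just placed), and there are $2^{k-1}$ atomic expressions of atomic length $k$ ending in $[Js_i,J]$, not one. What is true --- and what the paper's proof actually establishes --- is that there is exactly one such expression containing no subexpression of the form $[+s-t+t-s]$; that one is the rotation-sequence window, and this is how $E(i,k)$ must be defined (it is also the reading under which the lemma's first sentence is correct, and it suffices for Proposition~\ref{prop.dihed}, since a reduced expression can never contain the non-reduced piece $[-t+t]$). Your backward-determinism argument does prove uniqueness correctly \emph{within} this restricted class, but as written your proof never confronts the backtracking choice at all, and the uniqueness you invoke later (``by uniqueness it equals $E(1,d+1)$'', etc.) is uniqueness in the restricted class.

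Once the first assertion is repaired in this way, the remainder of your argument goes through, and is in fact more explicit than the paper's own: the paper disposes of the reducedness criterion and the braid statement with a single appeal to Definition~\ref{def.d} and Proposition~\ref{prop.braid}, even though for $k\geq d+2$ the expression $E(1,k)$ extends \eqref{eq.tok2} on the \emph{left} rather than on the right, so Definition~\ref{def.d} is not literally applicable there. Your reversal argument --- reduce to $k=d+2$, reverse the expression, identify the reversal with \eqref{eq.tok2} for the rotation sequence of $(S,s_2,\con{s_1})$ at parameter $d+1$, and use $d(S,s_2,\con{s_1})=d$ together with the fact that reversal preserves reducedness --- closes exactly this point, in the same spirit as the proof of Lemma~\ref{lem.d+1}. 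Likewise your matching of the two sides of \eqref{eq.atombraid} with $E(1,d+1)$ and $E(2,d+1)$ agrees with the paper's intent. So: fix the uniqueness step by excluding (and only then appealing to the nonexistence of) subexpressions $[+s-t+t-s]$, and the rest stands.
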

\begin{proof}
Note that there exist exactly two atomic $(I,J)$-cosets, for exactly two $I\subset S$, namely, $[Js_i\setminus w_{Js_i}s_iw_{Js_i},Js_i,J]$. 
Similarly, if $L\subset S$ is any subset with $|L|=|J|$, then there exists exactly two atomic $(I,L)$-cosets where $I$ \rev{varies}.
Thus, at each step of composing atomic cosets, from the right to left, we have exactly two choices of composable atomic cosets. 
Except at the first step, one of the two choices yields a non-reduced subexpression of the form $[+s-t+t-s]$.

It follows that there is a unique atomic expression, call it $E(i,k)$, of atomic length $k>0$ ending at $[Js_i,J]$, which does not contain a subexpression of the form $[+s-t+t-s]$.
By Proposition~\ref{prop.atominrotation}, this is of the form \eqref{eq.tok2} for the rotation sequence $u_\bullet$ associated to $(S,\con{u_{k}},u_{k-1})$(these are the same rotation sequence up to shift, which arises since we start from the right here).
If $k=d+1$ then we obtain the atomic expression in the left hand side of \eqref{eq.atombraid}. 
The claim thus follows from Definition~\ref{def.d} and Proposition~\ref{prop.braid}.
\end{proof}

\begin{prop}\label{prop.dihed}
Let $(W,S)$ be a finite Coxeter system and let  $J\subset S$ with $|S\setminus J|=2$.
Then we have a bijection
    \[\phi:W(I_2(d+1)) \to \{\text{core $(I,J)$-cosets where $I\subset S$ varies}\}=: \coreJ{J}\]
where $d+1$ is the atomic length of the atomic braid relation associated to  $(S,S\setminus J)$ (see Definition~\ref{def.braid} and remark~\ref{rem.param}).
Moreover, the bijection is compatible with the braid relations in the sense that the only braid relation 
\[\ka{m} \expr \kb{m}\]
in $W(I_2(d+1))$ 
corresponds to the only atomic braid relation in $\coreJ{J}$, which is on the element $\phi(\ka{m})=\phi(\kb{m})$.
\end{prop}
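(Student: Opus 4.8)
The plan is to build the bijection $\phi$ explicitly on reduced expressions, using the enumeration of reduced expressions in the dihedral group $W(I_2(d+1))$ from \eqref{eq.rexsdihed} together with the enumeration of atomic reduced expressions in $\coreJ{J}$ provided by Lemma~\ref{lem.ka}. First I would set $S\setminus J=\{s_1,s_2\}$ and recall that, by Lemma~\ref{lem.ka}, for each atomic length $k$ with $0< k\leq d+1$ and each $i\in\{1,2\}$ there is exactly one reduced atomic expression $E(i,k)$ ending in $[Js_i,J]$, and that no atomic expression of length $>d+1$ is reduced (since $E(1,d+1)\expr E(2,d+1)$ is the unique atomic braid relation, and any longer expression would contain a non-reduced subword of the form $[+s-t+t-s]$). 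I would then define $\phi$ by sending the dihedral reduced expression $\ka{k}$ to (the core coset expressed by) $E(1,k)$ and $\kb{k}$ to $E(2,k)$, with the identity $e\in W(I_2(d+1))$ sent to the trivial core coset with expression $[J]$.

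Next I would verify that $\phi$ is well-defined on \emph{cosets} rather than on expressions: the only identifications among the reduced expressions \eqref{eq.rexsdihed} of $W(I_2(d+1))$ are $e$ (unique), the $2(d)$ intermediate one-sided words of each length $1\leq k\leq d$, and the single coincidence $\ka{d+1}=\kb{d+1}$ coming from the dihedral braid relation $m=d+1$. On the atomic side, the matching data is exactly: the unique trivial coset, the expressions $E(1,k),E(2,k)$ for $1\leq k\leq d$ which by Lemma~\ref{lem.ka} express distinct core cosets, and the coincidence $E(1,d+1)\expr E(2,d+1)$, which by the same lemma \emph{is} an atomic braid relation and so expresses a single core coset. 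Thus the only relation collapsing two reduced expressions on each side matches up under $\phi$, giving that $\phi$ descends to a well-defined map on $W(I_2(d+1))$ and is injective.

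For surjectivity I would invoke Proposition~\ref{prop.arexfinns}: every core $(I,J)$-coset $\pcoset$ with $I$ varying over $S$ admits an atomic reduced expression, which necessarily ends in some $[Js_i,J]$ (the last atom determines $I$), and by Corollary~\ref{cor.atomlen} all its atomic reduced expressions share a common atomic length $k$. By the uniqueness clause of Lemma~\ref{lem.ka} that expression must be $E(i,k)$ with $k\leq d+1$, so $\pcoset$ is in the image of $\phi$. Finally, the compatibility with braid relations is essentially immediate from the construction: the single nontrivial braid relation $\ka{d+1}\expr\kb{d+1}$ in $W(I_2(d+1))$ is carried by $\phi$ to $E(1,d+1)\expr E(2,d+1)$, which Lemma~\ref{lem.ka} identifies as the unique atomic braid relation in $\coreJ{J}$, and it indeed relates the two reduced expressions of the single coset $\phi(\ka{d+1})=\phi(\kb{d+1})$.

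I expect the main obstacle to be the bookkeeping that pins down exactly which element of $S$ indexes the parabolic $I$ at each stage, i.e.\ confirming that the ``two choices at each step, one reduced'' dichotomy of Lemma~\ref{lem.ka} really does produce precisely the $2m+1$ expressions \eqref{eq.rexsdihed} with no extra coincidences among the $E(i,k)$ for $k\leq d$ and no valid reduced expressions of length exceeding $d+1$; once the length statistic and the single-braid-relation structure are matched to the dihedral case, well-definedness and bijectivity follow formally, so the crux is entirely contained in a careful reading of Lemma~\ref{lem.ka} and Definition~\ref{def.d}.
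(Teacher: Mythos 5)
Your construction coincides with the paper's: define the map on reduced expressions by $\ka{k}\mapsto E(1,k)$, $\kb{k}\mapsto E(2,k)$ via Lemma~\ref{lem.ka}, match the unique braid relation on each side, and deduce surjectivity from Proposition~\ref{prop.arexfinns}. But your well-definedness/injectivity step has a genuine gap: you assert that $E(1,k)$ and $E(2,k)$ for $1\leq k\leq d$ ``by Lemma~\ref{lem.ka} express distinct core cosets,'' and that Lemma~\ref{lem.ka} identifies $E(1,d+1)\expr E(2,d+1)$ as \emph{the unique} atomic braid relation in $\coreJ{J}$. Lemma~\ref{lem.ka} gives neither: it is a statement about \emph{expressions} (uniqueness of the expression of each type and length, and that $E(1,d+1)\expr E(2,d+1)$ \emph{is} a braid relation); it says nothing about when two such expressions express the same \emph{coset}. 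Distinctness of expressions cannot by itself yield distinctness of cosets --- indeed at $k=d+1$ the two distinct expressions express one and the same coset. (A related slip: $E(i,k)$ for $k>d+1$ is non-reduced because of the maximality in Definition~\ref{def.d}, not because it contains a subword of the form $[+s-t+t-s]$; by construction it contains none.)

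The missing ingredients are exactly what the paper's proof invokes at this point: the atomic Matsumoto theorem (Theorem~\ref{mats}), which guarantees that two atomic reduced expressions of the same core coset are connected by a chain of atomic braid relations --- and since every reduced atomic expression of a coset in $\coreJ{J}$ ends at $J$ and hence is one of the $E(i,k)$, such a chain never leaves your list --- together with Proposition~\ref{prop.suds} (or Proposition~\ref{prop.rexgraph}), which shows that the only atomic braid relation applicable among the $E(i,k)$ is $E(1,d+1)\expr E(2,d+1)$; in particular none applies for $k\leq d$, roughly because one side of an atomic braid relation is a maximal reduced zigzag and so cannot occur as a proper consecutive subexpression of the reduced expression $E(i,d+1)$. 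With these two facts the only coset coincidence among the $E(i,k)$ is the one at $k=d+1$, and the rest of your argument goes through formally as you describe. Your closing remark that ``the crux is entirely contained in a careful reading of Lemma~\ref{lem.ka} and Definition~\ref{def.d}'' is therefore off the mark: the crux is Theorem~\ref{mats} combined with the uniqueness of the applicable braid relation, which is precisely how the paper completes the proof.
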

\begin{proof}
We construct two such bijections $\phi_1, \phi_2$ in the proof. Let $S\setminus J = \{s_1,s_2\}$.  Then Lemma~\ref{lem.ka} and the related discussion in Section~\ref{s.switchbacks} applies. 

Define 
\begin{equation}
\Phi_i: \{\text{all reduced expressions in $I_2(d+1)$}\} \to \{\text{atomic reduced expressions for $(W,S)$}\}     
\end{equation}
by letting $\Phi_1(\ka{k})=E(1,k)$, $\Phi_1(\kb{k})=E(2,k)$, from Lemma~\ref{lem.ka} and the other way for $\Phi_2$.  
Each $\Phi_i$ is injective by construction.

The only braid relation in the domain of $\Phi_i$ is $\ka{d+1}\expr \kb{d+1}$. Lemma~\ref{lem.ka} says that $\Phi_i$ maps this to the atomic braid relation $\Phi_i(\ka{d+1})\expr \Phi_i(\kb{d+1})$. By Proposition~\ref{prop.suds}, the latter is the only atomic braid relation in the image of $\Phi_i$.

Now the classical Matsumoto theorem and the atomic Matsumoto theorem (Theorem~\ref{mats}) together show that $\Phi_i$ induces a well-defined injection $\phi_i$ satisfying the second claim of the proposition. That $\phi_i$ is surjective follows from Proposition~\ref{prop.arexfinns}.
This completes the proof.
\end{proof}

The following examples are deduced from \cite[Section 6]{EKo} and Proposition~\ref{prop.dihed}.

\begin{ex}\label{Dbraid}
Let $(W,S)$ be of type $D$. Let $s_1\in S$ be the unique element having three edges in the associated Dynkin diagram. 
If $s_1\in J\subset S$ with $|S\setminus J|=2$, then   $\coreJ{J}$ and its atomic reduced expressions correspond to that of the dihedral group of type $A_2$.
If $s_1\not\in J\subset S$ with $|S\setminus J|=2$, then $\coreJ{J}$ and its atomic reduced expressions correspond to that of the dihedral group of type $B_2$.
\end{ex}


\begin{ex}\label{H4braid}
Let $(W,S)$ be of type $H_4$ with the indexing 
\begin{tikzpicture}[scale=0.4,baseline=-3]
\protect\draw (4 cm,0.05cm) -- (2 cm,0.05 cm);
\protect\draw (4 cm,-0.05cm) -- (2 cm,-0.05 cm);
\protect\draw (4 cm,0cm) -- (2 cm,0 cm) ;
\protect\draw (2 cm,0) -- (0 cm,0);
\protect\draw (0 cm,0) -- (-2 cm,0);
\protect\draw[fill=white] (3 cm, 0 cm) circle (0cm) node[below=0pt]{\tiny $5$};
\protect\draw[fill=white] (4 cm, 0 cm) circle (.15cm) node[above=1pt]{\scriptsize $4$};
\protect\draw[fill=white] (2 cm, 0 cm) circle (.15cm) node[above=1pt]{\scriptsize $3$};
\protect\draw[fill=white] (0 cm, 0 cm) circle (.15cm) node[above=1pt]{\scriptsize $2$};
\protect\draw[fill=white] (-2 cm, 0 cm) circle (.15cm) node[above=1pt]{\scriptsize $1$};
\end{tikzpicture}.
Then $\coreJ{\{s_3,s_4\}}$ and its atomic reduced expressions correspond to that of the dihedral group of type $I_2(10)$.
If $J \subset S$ is any subset of order two other than $\{s_3,s_4\}$, then $\coreJ{J}$ and its atomic reduced expressions correspond to that of the dihedral group of type $I_2(12)$.
\end{ex}

\begin{rem}
If $J\subset S$ is such that $S\setminus J=\{s,t\}$ with $w_{Js}sw_{Js} = s$ and $w_{Jt}tw_{Jt} = t$ then $\SC^{\core}_J$ consists only of $(J,J)$-cosets.
Thus $\SC^{\core}_J$
is a monoid and $\phi_i$ are upgraded to isomorphisms from $(W(I_2(d+1)),*)$ (viewed as a category with one object) to $\SC^{\core}_J$ so that $\phi_1\inv\circ \phi_2$ is the automorphism on $(W(I_2(d+1)),*)$ swapping the two Coxeter generators. Similarly, we have an algebra isomorphism between the nilCoxeter algebra of type $I_2(d+1)$ and $\Dem^{\core}_J$ in this case.

In general, one can define an equivalence relation on the subsets of $S$ generated by $K\sim L$ if there exists an atomic $(K,L)$-coset and consider the subcategory
\begin{equation}\label{eq.DemJeqclass}
\Dem^{\core}_{(J)}:=\bigcup_{K\in(J)}\Dem^{\core}_J 
\end{equation}
in $\Dem^{\core}$ associated to the equivalence class $(J)$ of $J$.
Then $\Dem^{\core}_{(J)}$ is a multiple object version of the nilCoxeter algebra $D(I_2(d+1))$.
Viewing a bijection $\phi^K$ from Proposition~\ref{prop.dihed}, for each $K\in(J)$, as a bijection from $D(I_2(d+1))$ to $\Dem^{\core}_K$, we have a decomposition
\[\Dem^{\core}_{(J)} = \bigsqcup_{K\in (J)} \phi^K(D(I_2(d+1))).\] 
\end{rem}

In our discussion in this section, the ambient Coxeter system $(W,S)$ is assumed to be finite in order for Section~\ref{s.switchbacks} to apply. The next remark  provides some complement to this.

\begin{rem}\label{rem.infiniterank2}
Let $(W,S)$ be infinite and $J\subset S$ finitary with $S\setminus J = \{s_1,s_2\}$ for $s_1\neq s_2$.
If $Js_i\subset S$ is not finitary then there is no $(I,J)$-expression ending in $[-s_i]$, for any $I\subset S$. 
If $Js_1\subset S$ is finitary, then a reduced $(I,J)$-expression ending in $[-s_1]$ is still of the form $E(1,k)$, which exists for $k< m$ where an infinitary subset $K\subset S$ is first involved at the $m$-th step. In particular, if $S$ is of affine type then we have well-defined and distinct $E(1,k)$ and $E(2,k)$ for all $k$ which identifies $\SC^{\core}_J$ with the Coxeter monoid of type $I_2(\infty)$.
\end{rem}


\section{General case}\label{s.highrank}

\subsection{The poset of left/right-fixed core cosets}
Let $(W,S)$ be a Coxeter system.
The rest of the paper is devoted to general and special discussions on
\begin{equation}\label{eq.defcoreJ}
\coreJ{J}:=\{\text{core $(I,J)$-cosets where $I\subset S$ varies}\} \end{equation}
for $J\subset S$ finitary.

Since non-reduced expressions do not appear in the rest of the paper, the discussions below are valid, up to linearization, for 
\[\Dem^{\core}_J=\bigcup_{I\in\Dem^{\core}}\Hom_{\Dem^{\core}}(I,J)\]
which is more naturally defined from the category $\Dem^{\core}$ (recall that the core cosets in $\SC$ do not form a subcategory).

We consider the left-fixed versions
\begin{equation*}
\!_{I}\SC^{\core}:=\{\text{core $(I,J)$-cosets where $J\subset S$ varies}\},\quad \!_{I}\Dem^{\core}=\bigcup_{J\in\Dem^{\core}}\Hom_{\Dem^{\core}}(I,J) 
\end{equation*}
as well.

In the special cases covered in Section~\ref{s.dihedral} or in \cite{KELP3}, the atomic reduced expressions and the atomic braid relations in $\coreJ{J}$
agrees with the ordinary reduced expressions and the ordinary braid relations in some Coxeter system. This is not the case in general. The easiest such example is the following.

\begin{ex}\label{ex.d}
The order of $\coreJ{\{s\}}$ in type $D_4$, for any $s\in S$, is $32$. Moreover, each such $\coreJ{\{s\}}$ has is a unique maximal element of atomic length $7$. 
(While these numbers are the same, the structure of $\coreJ{\{s\}}$, e.g., the atomic reduced expression graphs, does depend on $s\in S$. 
We discuss one of the two, up to symmetry, cases in detail in Section~\ref{ss.trihedral}.) 
\end{ex}

Here is some more result of computations on SageMath.
\begin{ex}\label{ex.H4numbers}
We consider the four corank 3 cases in type $H_4$. We index $S=\{s_1,s_2,s_3,s_4\}$ as in \begin{tikzpicture}[scale=0.4,baseline=-3]
\protect\draw (4 cm,0.05cm) -- (2 cm,0.05 cm);
\protect\draw (4 cm,-0.05cm) -- (2 cm,-0.05 cm);
\protect\draw (4 cm,0cm) -- (2 cm,0 cm) ;
\protect\draw (2 cm,0) -- (0 cm,0);
\protect\draw (0 cm,0) -- (-2 cm,0);
\protect\draw[fill=white] (3 cm, 0 cm) circle (0cm) node[below=0pt]{\tiny $5$};
\protect\draw[fill=white] (4 cm, 0 cm) circle (.15cm) node[above=1pt]{\scriptsize $4$};
\protect\draw[fill=white] (2 cm, 0 cm) circle (.15cm) node[above=1pt]{\scriptsize $3$};
\protect\draw[fill=white] (0 cm, 0 cm) circle (.15cm) node[above=1pt]{\scriptsize $2$};
\protect\draw[fill=white] (-2 cm, 0 cm) circle (.15cm) node[above=1pt]{\scriptsize $1$};
\end{tikzpicture}.
In all cases, the set $\coreJ{\{s_i\}}$ is of order 480 and contains a unique maximal element of atomic length 31. 
When $i=1$ the number of elements of atomic length $j$ is the $j$-th number in the sequence
\[1, 3, 5, 7, 9, 11, 13, 15, 17, 19, 21, 23, 24, 24, 24, 24, 24, 24, 24, 24, 23, 21, 19, 17, 15, 13, 11, 9, 7, 5, 3, 1\]
and the longest element has 25392 atomic reduced expressions;
when $i=2$ we have the numbers
\[1, 3, 5, 7, 9, 12, 14, 15, 17, 19, 22, 23, 22, 22, 23, 26, 26, 23, 22, 22, 23, 22, 19, 17, 15, 14, 12, 9, 7, 5, 3, 1\]
and the longest element has 35032 atomic reduced expressions;
when $i=3$ we have
\[1, 3, 5, 7, 9, 12, 15, 16, 17, 19, 21, 22, 22, 23, 24, 24, 24, 24, 23, 22, 22, 21, 19, 17, 16, 15, 12, 9, 7, 5, 3, 1\]
and the longest element has 36874 atomic reduced expressions;
when $i=4$ we have
\[1, 3, 5, 7, 9, 12, 15, 17, 18, 18, 19, 21, 23, 24, 24, 24, 24, 24, 24, 23, 21, 19, 18, 18, 17, 15, 12, 9, 7, 5, 3, 1\]
and the longest element 36746 atomic reduced expressions.
That all atomic length sequences are symmetric is because the involution $s\mapsto\con{s}$ is the identity in type $H$, which we explain in Proposition~\ref{prop.w0-}.
\end{ex}

The objects $\coreJ{J}$ do share a number of general properties of the Coxeter monoids. We investigate some of such in this section.
Recall that, for $S$ finitary, we set $\con J = w_SJw_S$ where $w_S$ is the longest element in $(W_S,S)$.

\begin{lem}\label{lem.maxcore}
Let $S$ be finitary.
Then for $I,J\subset S$, the $(I,J)$-coset $W_Iw_SW_J$ is core if and only if $I=\con J$.
\end{lem}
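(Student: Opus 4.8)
The plan is to reduce the statement to the set-theoretic form of the core condition in Definition~\ref{introdef.core}, applied to the obvious representative $w_S$ of the coset. Write $p=W_Iw_SW_J$. Since $w_S\in p$, Definition~\ref{introdef.core} says that $p$ is core precisely when the three sets $W_Iw_S$, $W_Iw_SW_J$, and $w_SW_J$ coincide. The first thing I would check is that this threefold equality is equivalent to the single equality $W_Iw_S=w_SW_J$. The forward implication is immediate; conversely, if $W_Iw_S=w_SW_J$ then $W_Iw_SW_J=(w_SW_J)W_J=w_SW_J$ and likewise $W_Iw_SW_J=W_I(W_Iw_S)=W_Iw_S$, so all three sets agree and $p$ is core.

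Next I would translate the coset equality $W_Iw_S=w_SW_J$ into a condition on $I$ and $J$. Conjugating, it is equivalent to $w_S^{-1}W_Iw_S=W_J$. Because $w_S$ is an involution and conjugation by $w_S$ carries $W_I$ to $W_{w_Sw_Iw_S}=W_{\con I}$, this reads $W_{\con I}=W_J$; here I use the standard fact, valid since $w_S$ is the longest element of the finite group $W_S$, that $\con{\cdot}$ permutes $S$ and is an involution, so that $\con I\subset S$ and $\con{\con I}=I$. Finally, since a parabolic subgroup determines its generating set (that is, $W_A\cap S=A$ for $A\subset S$), the equality $W_{\con I}=W_J$ is equivalent to $\con I=J$, and applying $\con{\cdot}$ once more this is equivalent to $I=\con J$. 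Chaining the equivalences gives that $p$ is core if and only if $I=\con J$, as claimed.

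The argument is a short, purely formal chain of equivalences, so I do not expect a genuine obstacle. The only points that require care are the two classical Coxeter-theoretic facts invoked along the way: that conjugation by $w_S$ sends $S$ to $S$ (so that $\con{\cdot}$ is well defined and is an involution on subsets of $S$), and that distinct subsets of $S$ generate distinct parabolic subgroups. Both are standard, so beyond citing them the proof reduces to the bookkeeping above.
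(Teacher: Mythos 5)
Your proof is correct, but it follows a genuinely different route from the paper's. You work with the set-theoretic characterization of core cosets (Definition~\ref{introdef.core}) applied at the representative $w_S$: the threefold equality collapses to the single identity $W_Iw_S=w_SW_J$, which conjugation by $w_S$ turns into $W_{\con I}=W_J$, hence $I=\con J$ by the standard facts that $\con{\cdot}$ is an involution permuting $S$ and that $W_A\cap S=A$. The paper instead stays inside the redundancy formalism of Definition~\ref{def.core}: it picks the element $w_Sw_J\in p$, writes it as $x.\mi{p}$ with $x\in W_I$, computes $\mi{p}J\mi{p}\inv=x\inv\con{J}x$, and then uses Kilmoyer's theorem plus conjugation by $x$ to identify $\leftred(p)$ with a set of the same size as $I\cap\con J$, concluding by comparing cardinalities (after first disposing of the case $|I|\neq|J|$). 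Your argument is more elementary and uniform: it needs no case distinction, no minimal representative $\mi{p}$, and no Kilmoyer-type input, though it does lean on the equivalence of the two definitions of core, which the paper asserts just after Definition~\ref{def.core}. The paper's computation buys slightly more: it shows $|\leftred(W_Iw_SW_J)|=|I\cap\con J|$ for arbitrary $I,J$, i.e., it measures how far the maximal coset is from being core, not just when it is core, and it exercises the redundancy machinery used throughout the rest of the paper. One cosmetic slip on your side: the subscript in ``$W_{w_Sw_Iw_S}$'' should read $W_{w_SIw_S}$, conjugating the subset $I$ rather than the element $w_I$; the surrounding argument makes the intent clear.
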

\begin{proof}
The claim is trivial when $|I|\neq |J|$, so we assume $|I|=|J|$.

Let $\pcoset= W_{I} w_S W_J$ and consider the element $w_Sw_J\inv\in \pcoset$.
We write $w_Sw_J\inv = x.\mi{p}$ for some $x\in W_I$.
Then 
\begin{equation}\label{pJp}
\mi{p} J \mi{p}\inv = x\inv w_S w_J J w_J w_S x = x\inv w_S J w_S x = x\inv \con{J} x     
\end{equation}
holds.
Now we have
\[\leftred(p) \underset{\text{def.}}{=} I\cap \mi{p}J\mi{p}\inv \underset{}{=} W_I\cap\mi{p}J\mi{p}\inv \underset{\eqref{pJp}}{=} W_I\cap x\inv \con{J} x \underset{\text{conjugate}}{\cong} x W_I x\inv \cap \con{J} \underset{x\in W_I}{=} W_I\cap \con{J} = I\cap \con{J}\]
where the second equality is due to Kilmoyer's theorem \cite[Lemma 2.14]{EKo}.
Since $\pcoset$ is core if and only if $\leftred(p) = |J|$, this proves the claim.
\end{proof}

An immediate consequence of Lemma~\ref{lem.maxcore} is:
\begin{cor}\label{cor.uniquemax}
Let $S$ be finitary and $J\subset S$. The core coset $\pcoset_J=W_{\con J} w_S W_J$ is the unique maximal element in $\coreJ{J}$.
\end{cor}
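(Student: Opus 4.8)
The plan is to read off the result directly from Lemma~\ref{lem.maxcore}, using that $S$ is finitary so that $W=W_S$ is finite with longest element $w_S$. First I would observe that taking $I=\con J$ in Lemma~\ref{lem.maxcore} shows that $\pcoset_J=W_{\con J}w_SW_J$ is indeed a core coset, hence $\pcoset_J\in\coreJ{J}$; and since $w_S=e\cdot w_S\cdot e\in\pcoset_J$ is the Bruhat-maximum of the whole group $W$, we get $\ma{\pcoset_J}=w_S$.

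Next I would compare an arbitrary core $(I,J)$-coset $\pcoset\in\coreJ{J}$ against $\pcoset_J$. Because $w_S$ is the longest element of $W$, its maximal representative satisfies $\ma{\pcoset}\leq w_S=\ma{\pcoset_J}$, and therefore $\pcoset\leq\pcoset_J$ in the Bruhat order (which on double cosets is governed by their maximal representatives). This already shows $\pcoset_J$ is a maximum. For uniqueness I would note that $\ma{\pcoset}=w_S$ forces $w_S\in\pcoset$, i.e.\ $\pcoset=W_Iw_SW_J$ for some $I\subset S$; Lemma~\ref{lem.maxcore} says such a coset is core only when $I=\con J$, so the only core coset attaining the value $w_S$ is $\pcoset_J$ itself. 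Hence every $\pcoset\neq\pcoset_J$ in $\coreJ{J}$ satisfies $\pcoset<\pcoset_J$, and $\pcoset_J$ is the unique maximal element.

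The only point requiring care---the \emph{main obstacle}, though it is mild---is bookkeeping about the order: one must confirm that the relevant order on $\coreJ{J}$ is the one in which domination of maximal representatives $\ma{\pcoset}\leq\ma{\pcoset_J}$ entails $\pcoset\leq\pcoset_J$, so that a global top for $\ma{\cdot}$ is a genuine maximum rather than merely a maximal element. Granting the standard double-coset Bruhat order, no further input beyond Lemma~\ref{lem.maxcore} and the finiteness of $W$ is needed.
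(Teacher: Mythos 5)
Your proposal is correct and takes essentially the same route as the paper: the paper gives no separate argument, presenting the corollary as an immediate consequence of Lemma~\ref{lem.maxcore}, and your write-up is exactly the natural expansion of that---every $(I,J)$-coset $\pcoset$ satisfies $\ma{\pcoset}\leq w_S$, while Lemma~\ref{lem.maxcore} identifies $W_{\con{J}}w_SW_J$ as the unique \emph{core} coset containing $w_S$. Your closing caveat about which order is meant is reasonable but is shared by the paper itself, which defines the weak Bruhat orders on $\coreJ{J}$ only after this corollary; under the Bruhat order on double cosets governed by maximal representatives (the reading under which the corollary is ``immediate''), your argument is complete.
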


In a Coxeter group, multiplication by the longest element $w_0$ induces an anti-involution on the strong and weak Bruhat poset. 
The $*$-multiplication by an element cannot induce an anti-involution, but since a Coxeter monoid agrees with the Coxeter group as a set, we have the same anti-involution on a Coxeter monoid. 
To define such anti-involutions directly in terms of the $*$-multiplication, we do as follows. 
Given $w\in W$, there is a unique element $x\in W$ satisfying $w.x=w_0$. Letting $w\mapsto x$, we obtain an anti-involution on the left Bruhat order. 
Proposition~\ref{prop.w0-} is its atomic analogue.

We first fix notation for the left and right Bruhat orders on $\SC$. The (weak) left Bruhat order is defined and denoted as
\[p\leq_l q \text{ if $\qcoset = r.\pcoset$ for some $r\in \SC$ ;}\]
the (weak) right Bruhat order is defined and denoted as
\[p\leq_r q \text{ if $\qcoset = p.r$ for some $r\in \SC$} .\]

\begin{prop}\label{prop.w0-}
Let $S$ be finitary and $J\subset S$.
If $\con{J} = J$ then there exists an anti-involution of the poset $(\coreJ{J},\leq_l)$.
More generally, there is an anti-isomorphism of the posets $(\coreJ{J},\leq_l)\to (_{\con J}\SC^{\core},\leq_r)$.
\end{prop}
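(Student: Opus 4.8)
The plan is to imitate the regular fact recalled just before the statement: there one sends $w$ to the unique $x$ with $w.x=w_0$. Here $\pcoset_J=W_{\con J}w_SW_J$ plays the role of $w_0$ (it is the maximum of $\coreJ{J}$ by Corollary~\ref{cor.uniquemax}, and by Lemma~\ref{lem.maxcore} it is the unique core coset with maximal element $w_S$). Because an $(I,J)$-coset $\pcoset$ composes on the \emph{left} with a $(\con J,I)$-coset to land in the $(\con J,J)$-cosets, type considerations force the map to use left multiplication, which is exactly why the target is the left-fixed $\!_{\con J}\SC^{\core}$. Concretely, I would define $\Psi\colon\coreJ{J}\to\!_{\con J}\SC^{\core}$ by declaring $\Psi(\pcoset)=\qcoset$ to be the unique core $(\con J,I)$-coset for which $\qcoset.\pcoset=\pcoset_J$ is a reduced composition, and then prove $\Psi$ is an anti-isomorphism.

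The first task is well-definedness. Recall that for a core $(I,J)$-coset one has $\ma{\pcoset}=w_I\mi{\pcoset}=\mi{\pcoset}w_J$ by \cite[Lemma 2.12]{EKo}, and in particular $\mi{\pcoset_J}=w_Sw_J$. I would set $v:=\mi{\pcoset_J}\,\mi{\pcoset}^{-1}=w_Sw_J\mi{\pcoset}^{-1}$ and take $\qcoset:=W_{\con J}vW_I$. The key length computation is that $v.\mi{\pcoset}=\mi{\pcoset_J}$ is reduced: since $\rightdes(\mi{\pcoset})\cap J=\emptyset$, the product $w_J.\mi{\pcoset}^{-1}$ is reduced, and combining this with the identity $\ell(w_Su)=\ell(w_S)-\ell(u)$ gives $\ell(v)+\ell(\mi{\pcoset})=\ell(w_S)-\ell(w_J)=\ell(\mi{\pcoset_J})$. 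A short descent check, using the core relation $I=\mi{\pcoset}J\mi{\pcoset}^{-1}$, shows $\leftdes(v)\cap\con J=\emptyset$ and $\rightdes(v)\cap I=\emptyset$, so $v=\mi{\qcoset}$; and $v.\mi{\pcoset}.w_J=w_S$ is reduced, which identifies the reduced composition $\qcoset*\pcoset$ with $\pcoset_J$. Lemma~\ref{corecore} (the converse direction, valid since $|\con J|=|I|=|J|$) then guarantees $\qcoset$ is core, so $\qcoset\in\!_{\con J}\SC^{\core}$. Uniqueness of $\qcoset$ is right cancellation: if $\qcoset.\pcoset=\qcoset'.\pcoset=\pcoset_J$ are reduced then $\mi{\qcoset}.\mi{\pcoset}=\mi{\qcoset'}.\mi{\pcoset}$ are reduced products in $W$, so $\mi{\qcoset}=\mi{\qcoset'}$, and equality of the cosets follows since they have the same type.

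Next I would verify order-reversal and bijectivity. If $\pcoset\leq_l\pcoset'$, say $\pcoset'=\rcoset.\pcoset$ reduced, then $\Psi(\pcoset').\pcoset'=\Psi(\pcoset').\rcoset.\pcoset=\pcoset_J$ is reduced, so every subcomposition is reduced; in particular $\Psi(\pcoset').\rcoset$ is a reduced core $(\con J,I)$-coset with $(\Psi(\pcoset').\rcoset).\pcoset=\pcoset_J$, whence uniqueness gives $\Psi(\pcoset)=\Psi(\pcoset').\rcoset$. Thus $\Psi(\pcoset')\leq_r\Psi(\pcoset)$, i.e. $\Psi$ reverses order. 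The same explicit construction applied with the roles of left and right interchanged (now factoring $\mi{\pcoset_J}=\mi{\qcoset}.(\mi{\qcoset}^{-1}\mi{\pcoset_J})$, using $\leftdes(\mi{\qcoset})\cap\con J=\emptyset$) produces a map $\Psi'\colon\!_{\con J}\SC^{\core}\to\coreJ{J}$, and the cancellation statements force $\Psi'=\Psi^{-1}$; since $\Psi'$ is order-reversing by the symmetric argument, $\Psi$ is an anti-isomorphism. Finally, for the case $\con J=J$ I would postcompose $\Psi$ with the order-preserving isomorphism $\iota\colon(\!_J\SC^{\core},\leq_r)\to(\coreJ{J},\leq_l)$, $\qcoset\mapsto\qcoset^{-1}$ (inversion is core-preserving and swaps $\leq_r$ with $\leq_l$). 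The resulting $\sigma:=\iota\circ\Psi$ is an order-reversing self-map of $(\coreJ{J},\leq_l)$, and using $\pcoset_J^{-1}=\pcoset_J$ (valid as $w_S^{-1}=w_S$ and $\con J=J$) together with reversal of reduced compositions one checks $\sigma^2=\mathrm{id}$, giving the claimed anti-involution.

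The main obstacle is the well-definedness step: showing that $\mi{\pcoset}$ is genuinely a reduced right factor (suffix) of $\mi{\pcoset_J}$ and that the complement $\qcoset$ is again core. This is precisely the assertion that $\pcoset_J$ is the maximum of $(\coreJ{J},\leq_l)$, not merely of the strong Bruhat order, and it is resolved by the explicit minimal-element formula $\mi{\qcoset}=w_Sw_J\mi{\pcoset}^{-1}$ above; the only real inputs are $\rightdes(\mi{\pcoset})\cap J=\emptyset$ and the core relation $I=\mi{\pcoset}J\mi{\pcoset}^{-1}$. Everything else reduces to the cancellation property of reduced compositions in $\SC$, which I would extract from the minimal-element bookkeeping used in the proof of Lemma~\ref{corecore} and from Definition~\ref{def.p*q}.
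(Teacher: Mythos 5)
Your strategy coincides with the paper's: there, $f(\pcoset)$ is defined to be the unique $(\con{J},I)$-coset $\qcoset$ with $\qcoset*\pcoset = W_{\con{J}}w_SW_J$ (reduced), with existence and uniqueness outsourced to the right-to-left analogue of \cite[Proposition 4.4]{EKo}; the inverse is "a similar map", and the involution for $\con{J}=J$ is obtained by composing with expression reversal --- exactly your $\Psi$, $\Psi'$ and $\iota\circ\Psi$. So what you are really doing is reproving the cited input by hand, and most of that verification is sound: the length computation showing $v.\mi{\pcoset}=\mi{\pcoset_J}$ is reduced, the descent checks giving $v=\mi{\qcoset}$, the cancellation argument for order reversal, and the $\sigma^2=\id$ check via $\pcoset_J\inv=\pcoset_J$.

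There is, however, one circular step in your well-definedness argument. You infer that ``$\qcoset*\pcoset$ is a reduced composition equal to $\pcoset_J$'' from the element-level identity $v.\mi{\pcoset}.w_J=w_S$, and only afterwards invoke the converse half of Lemma~\ref{corecore} to conclude that $\qcoset$ is core. But reducedness of $\qcoset*\pcoset$ in the sense of Definition~\ref{def.p*q} is a condition on \emph{maximal} elements: it says $(\ma{\qcoset}w_I\inv).\ma{\pcoset}$ is reduced. Converting your statement about minimal elements into this one requires $\ma{\qcoset}=w_{\con{J}}.\mi{\qcoset}$, i.e.\ it requires $\leftred(\qcoset)=\con{J}$, which (the ranks being equal) is precisely the coreness you are deducing from it. This is not cosmetic: the inference ``$\mi{\qcoset}.\mi{\pcoset}.w_J$ reduced $\Rightarrow$ $\qcoset*\pcoset$ reduced'' is false for general cosets, even with $\pcoset$ core and all ranks equal --- in type $A_1\times A_1$ take $\qcoset=W_seW_t$ and $\pcoset=W_tsW_t$: then $\mi{\qcoset}\,\mi{\pcoset}\,w_t=st$ is reduced, while $(\ma{\qcoset}w_t\inv).\ma{\pcoset}=s\cdot st$ is not (here $\qcoset$ fails to be core). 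This is exactly why the proof of Lemma~\ref{corecore} assumes \emph{both} factors are core before making the same conversion. Fortunately the gap closes in one line with ingredients you already have: coreness of $\qcoset$ follows directly from Definition~\ref{def.core} and the core relation for $\pcoset$, since
\[
\mi{\qcoset}\, I\, \mi{\qcoset}\inv \;=\; vIv\inv \;=\; w_Sw_J\bigl(\mi{\pcoset}\inv I\mi{\pcoset}\bigr)w_Jw_S \;=\; w_Sw_J\,J\,w_Jw_S \;=\; w_SJw_S \;=\; \con{J}.
\]
Establishing this \emph{first} gives $\ma{\qcoset}=w_{\con{J}}.v$, after which your reducedness claim runs exactly as in the first half of the proof of Lemma~\ref{corecore}, and the remainder of your argument (uniqueness by cancellation, order reversal, the inverse map, and the involution) goes through as written.
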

\begin{proof}
Given a core $(I,J)$-coset $\pcoset$,
by (the right-to-left analogue of) \cite[Proposition 4.4]{EKo} and its proof, there is a unique $(\con J, I)$-coset $\qcoset$ such that $\qcoset*\pcoset = W_{\con J} w_S W_J$.
The assignment $\pcoset\mapsto\qcoset$ defines a map 
\[f:(\coreJ{J},\leq_l)\to ( _{\con J}\SC^{\core},\leq_r)\] 
whose inverse is given by a similar map. Thus $f$ is the anti-isomorphism claimed in the more general statement.

Now, if $J=\con J$ then reversing expressions induces a poset isomorphism  $g:( _{ J}\SC^{\core},\leq_r)\to (\coreJ{J},\leq_l)$. 
The anti-involution $g\circ f$ proves the first claim.
\end{proof}

One may also consider the strong Bruhat order on $\coreJ{J}$ as restricted from the Bruhar order on $\SC$ (see \cite{KELP2}), but for the purpose of studying $\coreJ{J}$ the weak Bruhat order seems more natural.


\subsection{A corank 3 example}\label{ss.trihedral}

Let $(W,S)$ be of type $D_4$ and let $c\in S:=\{c,t,u,v\}$ be the simple generator corresponding to the center of the Dynkin diagram of type $D_4$. In this subsection we describe the atomic reduced expressions for $P:=\coreJ{\{c\}}$.
The atomic cosets in $P$ are 
\[\at^c_t :\expr [t, ct,c],\quad \at^c_u :\expr [u,cu,c],\quad \at^c_v :\expr [v,cv,c].\]
Here $t$ is a shorthand for $\{t\}$, $ct$ denotes $\{c,t\}$, and so on.
Note that the $S_3$-symmetry permuting $t,u,v$ induces an $S_3$-symmetry on $P$. 
The other atomic cosets that appears in atomic reduced expressions in $P$ are
\[\at^t_c:\expr [c,ct,t],\quad \at^t_u:\expr [t,tu,t]\]
up to the $S_3$-symmetry. We drop the superscripts when it is clear from the context: for example,
there is a unique $s\in S$ which makes $\at^s_v*\at^c_t$ well-defined, namely $s=t$, so we write $\at_v*\at^c_t$ instead. 
Since each element in $P$ can be composed on the left by three atoms, and one of those three produces a subexpression of the form $[-s]\circ [+s]$, the number of reduced expressions in $P$ of (atomic) length $m$ is bounded by $3\cdot 2^{m-1}$.

A direct computation (or \cite[Section 6.4]{EKo}) determines the full list of atomic braid relations in $P$. 
The atomic braid relations for $(I,\{c\})$-cosets are 
\begin{equation}\label{withc}
    \at_c\at_t\at^c_u = \at_c\at_u\at^c_t,\quad \at_c\at_t\at^c_v = \at_c\at_v\at^c_t,\quad \at_c\at_v\at^c_u = \at_c\at_u\at^c_v,
\end{equation}
but we also need the atomic braid relations
\begin{equation}\label{noc}
    \at_t\at_u^v
    = \at_u\at_t^v
    , \quad \at_t\at_v^u
    = \at_v\at_t^u
    , \quad \at_v\at_u^t
    = \at_u\at_v^t,
\end{equation}
since they give relations in $P$, for example,
\begin{equation}\label{noc3}
    \at_t\at_u^v\at^c_v 
    = \at_u\at_t^v\at^c_v
    , \quad \at_t\at_v^u
    \at^c_u 
    = \at_v\at_t^u\at^c_u
    , \quad \at_v\at_u^t\at^c_t 
    = \at_u\at_v^t\at^c_t
    .
\end{equation}

Then it is easy to see (or use Corollary~\ref{presentation}) that there are 6 elements in $P$ of (atomic) length two, each of which has a unique reduced expression.
From \eqref{noc3} and \eqref{withc}, it follows that the 12 potentially-reduced length three expressions are indeed reduced and give 6 elements in $P$ of length three. Each of these has two reduced expressions.

There is only one way to extend a length three expression in $P$ to a length four expression in $P$, because a length three expression is related by braid relation to another that has a different atom on the left. 
So we have 6 elements of length four in $P$ each of which has two reduced expressions.

Now we may stop computing and apply Proposition~\ref{prop.w0-}, since the maximal element 
\[W_{c}w_S W_c= \{w_S, cw_S=w_Sc\}\in P\] has atomic length seven. This determines in particular the number of elements in $P$ of length four, five, six, seven, namely, 6, 6, 3, 1, respectively, as well as their weak Bruhat relations. 
This also determines the number of reduced expressions for each element, namely all elements of length four (resp., five, six, seven) has 2 (resp., 4, 8, 24) reduced expressions.



A few additional direct computations determine the Hasse diagram for the weak Bruhat poset $P$, which we include below. The diagram is aligned according to the atomic lengths. The number on each vertex is the number of atomic reduced expressions for the represented element.

\[
\begin{tikzpicture}[scale=0.8]
  \node (1) at (0,2) {$1$};
  \node (11) at (-2,1) {$1$};
  \node (12) at (0,1) {$1$};
  \node (13) at (2,1) {$1$};
  \node (21) at (-5,0) {$1$};
  \node (22) at (-3,0) {$1$};
  \node (23) at (-1,0) {1};
    \node (24) at (1,0) {$1$};
  \node (25) at (3,0) {$1$};
  \node (26) at (5,0) {1};
  \node (31) at (-5,-1) {$2$};
  \node (32) at (-3,-1) {$2$};
  \node (33) at (-1,-1) {2};
    \node (34) at (1,-1) {$2$};
  \node (35) at (3,-1) {$2$};
  \node (36) at (5,-1) {2};
  \node (41) at (-5,-2) {$2$};
  \node (42) at (-3,-2) {$2$};
  \node (43) at (-1,-2) {2};
    \node (44) at (1,-2) {$2$};
  \node (45) at (3,-2) {$2$};
  \node (46) at (5,-2) {2};
  \node (51) at (-5,-3) {$4$};
  \node (52) at (-3,-3) {$4$};
  \node (53) at (-1,-3) {$4$};
  \node (54) at (1,-3) {$4$};
  \node (55) at (3,-3) {$4$};
  \node (56) at (5,-3) {$4$};
  \node (61) at (-2,-4) {$8$};
  \node (62) at (0,-4) {$8$};
  \node (63) at (2,-4) {$8$};
  \node (7) at (0,-5) {$24$};
  \draw (1) -- 
  (11) -- (21) -- (31) -- (41) -- (51) --(61) -- (7)
  (11) -- (22) -- (32) -- (42) -- (51)
  (1) -- (12) -- (23) -- (31) 
  (43) -- (53) -- (62) -- (7)
  (12) -- (24) -- (34) -- (44) -- (52) -- (61)
  (1) -- (13) -- (25) -- (33) -- (43) -- (53) -- (62)
  (13) -- (26)
  (21) -- (32) 
  (22) -- (33)
  (23) -- (34)
  (24) -- (35)
  (25) -- (36)
  (26) -- (35)
  (26) -- (36)
  (35) -- (45) -- (55) -- (63) -- (7)
  (36) -- (46)
  (41) -- (52)
  (42) -- (53)
  (43) -- (54)
  (44) -- (55)
  (45) -- (56)
  (46) -- (54)
  (46) -- (56)
  (54) -- (62)
  (56) -- (63);
\end{tikzpicture}
\]

\begin{rem}
Proposition~\ref{prop.IW} below says that \cite[Example 3.12]{IW} illustrates our $P$. 
\end{rem}

\begin{rem}
If $S$ is of type $D_{n+2}$ and $S\setminus J =\{s_0,s_{\bar{0}}, s_n\}$ corresponds to the three extreme vertices in the Dynkin diagram, then the reduced expression graphs and the left Bruhat order on $\coreJ{J}$ are the same as our example in type $D_4$. The same explanation applies.
\end{rem}

\section{Tits cone intersections}\label{s.IW}

Let us consider the \emph{Tits cone} $\cone(W,S)$ associated to $(W,S)$ (see e.g. \cite[Section 1.3]{IW}, \cite[Section 2.6]{AbBr}, or \cite[p.123]{BjornerBrenti}). To briefly recall, the Tits cone is defined as the subset
\[\cone(W,S)=\bigcup_{w\in W}w(\overline{C})\subset V^*\] where  $V$ is the geometric (standard) realization of $(W,S)$ over $\mathbb R$ with the basis $\{\alpha_s\}_{s\in S}$ and  $\overline{C}$ is the closure of the standard chamber 
\[C=\{\theta\in V^*\ |\ \theta(\alpha_s)>0 \text{ for all }s\in S\}.\]

\begin{defn}\cite[Definition 1.5] {IW}\label{def.cone}
Given a subset $I\subset S$, the \emph{Tits $I$-cone} is the intersection 
\[\mathsf{Cone}(W,S,I) := \mathsf{Cone}(W,S)\cap \{ \theta \in V^* \ |\ \theta(\alpha_s)=0\text{ for }s\in I\}.\]
\end{defn}

The intersection $\cone(W,S,I)$ gives rise to a new kind of chamber geometry, as developed in \cite[Part I]{IW}. 
Let us fix $I\subset S$ for the rest of the section. 
For $J\subset S$, let
\[C_J:=\{\theta\in V^*\ |\ \theta(\alpha_s)>0\text{ for $s\in S\setminus J$ and $\theta(\alpha_s)=0$ for $s\in J$}\}\]
be the \emph{standard $J$-chamber} in $\cone(W,S,J)$. 
\rev{The image $w(C_J)$ under the action of any $w\in W$ is either contained in $\cone(W,S,I)$ 
or disjoint from $\cone(W,S,I)$. In fact, since the stabilizer of any $\theta\in w(C_J)$ is $wW_Jw\inv$, if $I\subset wW_Jw\inv$ then $w(C_J)\subset \cone(W,S,I)$, and if some $s\in I$ does not stabilize $w(C_J)$ then no element in $w(C_J)$ belongs to $\cone(W,S,I)$.}
A subset in $\cone(W,S,I)$ of the form $w(C_J)$, with some $w\in W$ and $J\subset S$ with $|J|=|I|$, is called an \emph{$I$-chamber}.

Let us denote by $\cham(W,S,I)$ the set of $I$-chambers in $\cone(W,S,I)$.
Then \cite[Theorem 1.12]{IW} provides the following relation between the Tits cone intersection and the core double cosets.
First note that, for a core $(I,J)$-coset $p$, we have $p=W_IxW_J=xW_J$ as sets, where $x\in p$. We thus let $p(C_J):=\mi{p}(C_J)=x(C_J)$.


\begin{prop}\label{prop.IW}
    For each finitary subset $I\subset S$, we have a bijection
    \begin{equation}
        \ _{I}\SC^{\core}(W,S) \rightarrow \mathsf{Cham}(W,S,I)
    \end{equation}
    sending a core $(I,J)$-coset $p$ to the chamber $p(C_J)$.
\end{prop}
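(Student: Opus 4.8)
The plan is to show that the assignment $p\mapsto p(C_J)=\mi{p}(C_J)$ is a well-defined map into $\cham(W,S,I)$ and then produce a two-sided inverse. First I would verify well-definedness: for a core $(I,J)$-coset $p$ we have $p=W_I\mi{p}W_J=\mi{p}W_J$ as sets, so $\mi{p}(C_J)$ does not depend on the representative $x\in p$ used to write $p(C_J)=x(C_J)$, since any two representatives differ by right multiplication by $W_J$, which stabilizes $C_J$ (as $\theta(\alpha_s)=0$ for $s\in J$ forces $C_J$ to be fixed pointwise by each such $s$). I would then check that $\mi{p}(C_J)$ is actually an $I$-chamber, i.e. contained in $\cone(W,S,I)$: here the core condition $I=\mi{p}J\mi{p}\inv$ is exactly what guarantees that the left $W_I$-action fixes the cell, equivalently that $\mi{p}(C_J)\subset\{\theta\mid \theta(\alpha_s)=0\text{ for }s\in I\}$. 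This is the combinatorial translation of the fact, recalled in the introduction, that $C_p\subset\Com(W,S,I)$ iff $wW_J=W_IwW_J$.

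\textbf{Injectivity and surjectivity.}
For injectivity, suppose $p,p'$ are core $(I,J)$- and $(I,J')$-cosets with $\mi{p}(C_J)=\mi{p'}(C_{J'})$. Two standard chambers $w(C_J)$ and $w'(C_{J'})$ coincide iff $J=J'$ and $w\inv w'\in W_J$; this is the standard fact that the parabolic left cosets $wW_J$ are in bijection with the cells $w(C_J)$ of the Coxeter complex (the stabilizer of $C_J$ in $W$ is exactly $W_J$). Hence $J=J'$ and $\mi{p}W_J=\mi{p'}W_J$, which forces $p=p'$ as double cosets. For surjectivity, take any $I$-chamber $w(C_J)$ with $|J|=|I|$ lying in $\cone(W,S,I)$. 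I would set $p:=W_IwW_J$ and argue that the containment $w(C_J)\subset\cone(W,S,I)$ forces $W_Iw=wW_J$, so that $p$ is core and maps to $w(C_J)$. The key input is \cite[Theorem 1.12]{IW} (equivalently the Coxeter-complex description in the introduction): an orbit cell $w(C_J)$ lies in the $I$-cone precisely when the left $W_I$-action fixes it, i.e. $W_IwW_J=wW_J$, which by Definition~\ref{def.core} says $p$ is a core $(I,J)$-coset with $\mi{p}(C_J)=w(C_J)$.

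\textbf{Main obstacle.}
I expect the crux to be the clean identification of $I$-chambers with core cosets on the nose — specifically the equivalence ``$w(C_J)\subset\cone(W,S,I)$'' $\iff$ ``$W_Iw=wW_J$'', including the matching of dimensions/coranks $|J|=|I|$. The containment direction is geometric (every $s\in I$ must fix the chamber, forcing $\theta(\alpha_s)=0$ there), but the reverse direction, extracting the algebraic equality of cosets from the geometric fixed-cell condition, is where one must invoke the precise statement of \cite[Theorem 1.12]{IW} rather than re-derive it. Since that theorem is available to me, the remaining work is bookkeeping: translating between the Tits-cone language of \cite{IW} and the double-coset language via $\mi{p}$, and confirming that the condition $|J|=|I|$ defining $I$-chambers corresponds to $p$ being a genuine core coset (full redundancy on both sides) rather than a lower-dimensional face. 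Once that dictionary is fixed, the bijection follows formally.
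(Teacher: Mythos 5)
Your proposal is correct and takes essentially the same route as the paper, which in fact gives two proofs: your well-definedness computation (the core condition $\mi{p}\inv I\mi{p}=J$ forcing $p(C_J)\subset\cone(W,S,I)$) is the paper's first argument, and your deferral of the key surjectivity step to \cite[Theorem 1.12]{IW} is exactly what the paper's second proof does. The only differences are cosmetic: you make the injectivity/stabilizer fact explicit where the paper leaves it implicit, while the paper's first proof shows the citation of \cite[Theorem 1.12]{IW} can be avoided entirely by the short direct argument that $W_I$ fixes $\cone(W,S,I)$ pointwise, so $xw(C_J)=w(C_J)$ for $x\in W_I$, giving $W_Iw\subset wW_J$, which suffices since $|I|=|J|$.
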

\begin{proof}
For the claimed bijection to be well-defined, we need that $p(C_J)$ is a chamber in $\cone(W,S,I)$. But if $\theta\in C_J$ and  $s\in I$, then 
\[(p(\theta))(\alpha_s)=(\mi{p}(\theta))(\alpha_s)=\theta(\alpha_{\mi{p}\inv s\mi{p}}) =0 \] since 
\[\mi{p}\inv s\mi{p}\in \mi{p}\inv I\mi{p} =\mi{p}\inv \leftred(p)\mi{p} =\rightred(p)= J,\] which shows that $p(C_J)\subset \cone(W,S,I)$.

Now let $w(C_J)\in \cham(W,S,I)$, i.e., $w(C_J)\subset \cone(W,S,I)$ for some $w\in W$ and $J\subset S$ with $|J|=|I|$. 
We need to show that $W_Iw=wW_J$, from which it follows that $p:=W_IwW_J=W_Iw=wW_J$ is core 
and that our map is bijective.
But if $x\in W_I$ then the assumption 
 $w(C_J)\subset \cone(W,S,I)$ implies $xw(C_J) = w(C_J)$ and thus $xw\in wW_J$. This shows $W_Iw\subset wW_J$, which suffices because of the assumption $|I|=|J|$.
\end{proof}

Proposition~\ref{prop.IW} is merely an interpretation of \cite[Theorem 1.12]{IW} in our setting. Those readers familiar with \cite{IW} may prefer:
\begin{proof}[Proof of Proposition~\ref{prop.IW} which factors through \cite{IW}]
\cite[Theorem 1.12]{IW} states that 
$\cham(W,S,I)$ is in bijection with the set $\mathsf{cCham}(W,S,I)$ of \emph{combinatorial chambers} \footnote{Note that $\mathsf{cCham}(W,S,I)$ is what \cite{IW} denotes by $\cham(W,S,I)$. See \cite[Definition 1.8]{IW}.}, which consists of the pairs $(x, J)$, for $x\in W$ and $J\subset S$, satisfying
\begin{enumerate}[(i)]
    \item\label{cham1} $\ell(x) = \min\{\ell(y)\ |\ y\in xW_J\}$;
    \item\label{cham2} $W_Ix = xW_J$
\end{enumerate}
via $(x,J)\mapsto x(C_{J})$.
It is therefore enough to show that the map $\!_{I}\SC^{\core}(W,S) \rightarrow \mathsf{cCham}(W,S,I)$
given by $p\mapsto (\mi{p},J)$ is a bijection. 
In fact, 
\eqref{cham2} 
is equivalent to $W_IxW_J$ being a core $(I,J)$-coset, so  $(x,J)\mapsto W_IxW_J
$ defines a left inverse. 
 Condition~\eqref{cham1} 
makes it a right inverse.
\end{proof}

Proposition~\ref{prop.IW} readily establishes a similar connection to the atomic nilCoxeter algebroid as well.

\begin{prop}\label{prop.basis}
 We have
\[\bigsqcup_{I\subset S \text{ finitary}}\cham(W,S,I)\xhookrightarrow[\text{basis}]{} \Dem^{\core}(W,S)\]
which restricts to 
\[\bigsqcup_{K\in (I)}\cham(W,S,K)\xhookrightarrow[\text{basis}]{} \!_{(I)}\Dem^{\core}(W,S)\]
for each $I\subset S$ (see \eqref{eq.DemJeqclass}). 
\end{prop}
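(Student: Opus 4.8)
The plan is to read the statement off from the bijection of Proposition~\ref{prop.IW} together with the explicit description of the morphism spaces of $\Dem^{\core}$ supplied by \eqref{eq.kSC=SD}.

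First I would record the basis of $\Dem^{\core}(W,S)$. By Definition~\ref{def.demcore} and the closure under composition proved just afterwards, $\Hom_{\Dem^{\core}}(I,J)$ is the $\kk$-span of $\{\partial_\pcoset \mid \pcoset \text{ a core } (I,J)\text{-coset}\}$. These lie inside the basis $\{\partial_\pcoset \mid \pcoset\in W_I\backslash W/W_J\}$ of $\Hom_{\Dem}(I,J)$ from \eqref{eq.kSC=SD}, so they are linearly independent and form a $\kk$-basis of $\Hom_{\Dem^{\core}}(I,J)$. Ranging over all finitary $I,J$, the family $\{\partial_\pcoset \mid \pcoset \text{ core}\}$ is therefore a basis of $\Dem^{\core}(W,S)$, one basis vector per core coset.

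Next I would sort this basis by the left index. Each core coset is a core $(I,J)$-coset for a unique source object $I$, so $\{\pcoset \text{ core}\} = \bigsqcup_{I} {}_{I}\SC^{\core}(W,S)$, the union being over finitary $I$. Proposition~\ref{prop.IW} gives, for each such $I$, a bijection ${}_{I}\SC^{\core}(W,S)\xrightarrow{\sim}\cham(W,S,I)$, $\pcoset\mapsto \pcoset(C_J)$; taking the disjoint union yields a bijection $\bigsqcup_I \cham(W,S,I)\xrightarrow{\sim}\{\pcoset \text{ core}\}$. Post-composing with $\pcoset\mapsto\partial_\pcoset$ produces the desired injection $\bigsqcup_I \cham(W,S,I)\hookrightarrow \Dem^{\core}(W,S)$ whose image is the basis just described. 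This is the first assertion.

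For the restriction I would check that the equivalence relation of \eqref{eq.DemJeqclass} governs the indices of core cosets. By Proposition~\ref{prop.arexfinns} a core $(K,J)$-coset has a reduced atomic expression $\at_1\ast\cdots\ast\at_m$ running through subsets $K=L_0,L_1,\dots,L_m=J$, and each $\at_i$ is an atomic $(L_i,L_{i+1})$-coset, so $L_i\sim L_{i+1}$ and hence $K\sim J$. Thus no core coset joins objects of different $\sim$-classes, and $\!_{(I)}\Dem^{\core}(W,S)$ has basis $\{\partial_\pcoset \mid \pcoset \text{ a core }(K,J)\text{-coset},\ K\in (I)\}$ (with $J\in(I)$ then automatic). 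Restricting the bijection of the previous paragraph to source indices $K\in(I)$ gives the second injection $\bigsqcup_{K\in(I)}\cham(W,S,K)\hookrightarrow \!_{(I)}\Dem^{\core}(W,S)$ as a basis. The argument is essentially bookkeeping, so I do not anticipate a genuine obstacle; the one point needing care is that being a core coset forces the source and target to lie in the same $\sim$-class, which is exactly what makes the decomposition of $\Dem^{\core}$ into equivalence-class blocks, and hence the restricted statement, meaningful.
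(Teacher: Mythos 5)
Your proof is correct and takes essentially the same route as the paper: the paper's proof likewise observes that the core cosets give a $\kk$-basis of $\Dem^{\core}$ (via \eqref{eq.kSC=SD}, i.e., that $\SC$ and $\Dem$ have the same morphisms up to linearization) and then invokes Proposition~\ref{prop.IW} to identify that basis with $\bigsqcup_I \cham(W,S,I)$. The only difference is that you spell out the bookkeeping the paper leaves implicit, notably the check via Proposition~\ref{prop.arexfinns} that a core $(K,J)$-coset forces $K\sim J$, which is what makes the restriction to $\!_{(I)}\Dem^{\core}(W,S)$ well-posed.
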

\begin{proof}
Since the categories $\SC$ and $\Dem$ have the same morphisms, up to linearization, and the same reduced expression with respect to the Coxeter presentations (see Section~\ref{s.prelim}), we have
\begin{equation}\label{eq.SC=SD}
    \ _{I}\SC^{\core}(W,S) \xhookrightarrow[\text{basis}]{} \ _{I}\Dem^{\core}(W,S).
\end{equation}
for each $I\subset S$ finitary.
The claim follows by Proposition~\ref{prop.IW}.
\end{proof}

Proposition~\ref{prop.basis} interprets Theorem~\ref{thm.presentDemazure} as a generators and relations presentation of all (resp., equivalent) $I$-chambers.



\begin{rem}
    While the current paper only considers finitary subsets $I\subset S$, there is no finiteness assumption in defining $\cone(W,S)$ and $\cham(W,S)$. However, for the application in \cite{IW} to tilting theory and birational geometry, it is assumed that $S$ is affine or finite. 
    In either case, a proper subset in $S$ is automatically finitary, so our setting suffices for such purposes. 
\end{rem}

\printbibliography

@misc{IW,
      title={Tits Cone Intersections and Applications}, 
      author={Iyama, Osamu and Wemyss, Michael},
      year={accessed on 20 Nov. 2023},
    note={preprint},
URL={https://www.maths.gla.ac.uk/~mwemyss/MainFile_for_web.pdf}
      }

@manual{sage,
  Key          = {SageMath},
  Author       = {{The Sage Developers}},
  Title        = {{S}ageMath, the {S}age {M}athematics {S}oftware {S}ystem ({V}ersion 9.5)},
  note         = {{\tt https://www.sagemath.org}},
  Year         = {2022},
}

@book{AbBr,
	author = {Abramenko, Peter and Brown, Kenneth S.},
	mrnumber = {2439729 (2009g:20055)},
	mrreviewer = {Ralf Gramlich},
	note = {Theory and applications},
	pages = {xxii+747},
	publisher = {Springer, New York},
	series = {Graduate Texts in Mathematics},
	title = {Buildings}, 
	volume = {248},
	year = {2008},
	Bdsk-Url-1 = {http://dx.doi.org/10.1007/978-0-387-78835-7}}

@misc{KELP1,
      title={Demazure operators for double cosets}, 
      author={Elias, Ben and Ko, Hankyung and Libedinsky, Nicolas and Patimo, Leonardo},
      year={2023},
note={arXiv:2307.15021}
      }

@misc{KELP2,
      title={Subexpressions and the Bruhat order for double cosets}, 
      author={Elias, Ben and Ko, Hankyung and Libedinsky, Nicolas and Patimo, Leonardo},
      year={2023},
note={arXiv:2307.15726}
      }

@misc{KELP3,
      title={On reduced expressions for core double cosets}, 
      author={Ben Elias and Hankyung Ko and Nicolas Libedinsky and Leonardo Patimo},
      year={2024},
      eprint={2402.08673},
      archivePrefix={arXiv},
      primaryClass={math.CO}
}

@misc{KELP4,
      title={Singular Light Leaves}, 
      author={Ben Elias and Hankyung Ko and Nicolas Libedinsky and Leonardo Patimo},
      year={2024},
      eprint={2401.03053},
      archivePrefix={arXiv},
\\      primaryClass={math.RT}
}

@book {BjornerBrenti,
    AUTHOR = {Bj\"{o}rner, Anders and Brenti, Francesco},
     TITLE = {Combinatorics of {C}oxeter groups},
    SERIES = {Graduate Texts in Mathematics},
    VOLUME = {231},
 PUBLISHER = {Springer, New York},
      YEAR = {2005},
     PAGES = {xiv+363},
%      ISBN = {978-3540-442387; 3-540-44238-3},
   MRCLASS = {05-01 (05E15 20F55)},
  MRNUMBER = {2133266},
MRREVIEWER = {Jian-yi\ Shi},
}

@misc{Patimo,
      title={Charges via the {A}ffine {G}rassmannian}, 
      author={Patimo, Leonardo},
      year={2021},
      eprint={arXiv:2106.02564},
      archivePrefix={arXiv},
      primaryClass={math.CO},
      note={arXiv:2106.02564},
}

@article {SingSb,
    AUTHOR = {Williamson, Geordie},
     TITLE = {Singular {S}oergel bimodules},
   JOURNAL = {Int. Math. Res. Not. IMRN},
  FJOURNAL = {International Mathematics Research Notices. IMRN},
      YEAR = {2011},
    NUMBER = {20},
     PAGES = {4555--4632},
   MRCLASS = {20F55 (18D05)},
  MRNUMBER = {2844932},
MRREVIEWER = {G\"{o}tz Pfeiffer},
   
}

@article {EKo,
    AUTHOR = {Elias, Ben and Ko, Hankyung},
     TITLE = {A singular {C}oxeter presentation},
   JOURNAL = {Proc. Lond. Math. Soc. (3)},
  FJOURNAL = {Proceedings of the London Mathematical Society. Third Series},
    VOLUME = {126},
      YEAR = {2023},
    NUMBER = {3},
     PAGES = {923--996},
      ISSN = {0024-6115},
   MRCLASS = {20F55 (20C08)},
  MRNUMBER = {4563864},
}

@PhdThesis{Wthesis,
  author  = {Williamson, Geordie},
  title   = {Singular {S}oergel bimodules},
  school  = {University of Freiburg},
  year    = {2008},
  mrclass = {Thesis},
  url     = {https://freidok.uni-freiburg.de/data/5093},
}

@article {Demazure,
    AUTHOR = {Demazure, Michel},
     TITLE = {Invariants sym\'{e}triques entiers des groupes de {W}eyl et
              torsion},
   JOURNAL = {Invent. Math.},
  FJOURNAL = {Inventiones Mathematicae},
    VOLUME = {21},
      YEAR = {1973},
     PAGES = {287--301},
     % ISSN = {0020-9910},
   MRCLASS = {14M15 (20G05 20H15 32M10)},
  MRNUMBER = {342522},
MRREVIEWER = {S. I. Gel\cprime fand},
       %DOI = {10.1007/BF01418790},
      % URL = {https://doi-org.uchile.idm.oclc.org/10.1007/BF01418790},
}
\end{document}